\def\IP{{\mathbb P}}
\def\IR{{\mathbb R}}
\def\IN{{\mathbb N}}
\def\IQ{{\mathbb Q}}
\def\IZ{{\mathbb Z}}
\def\IG{{\mathbb G}}
\def\fr{\mbox{\footnotesize $\dis\frac{1}{2}$}}
\def\n{\noindent}
\def\dis{\displaystyle}
\def\r{\rightarrow}
\def\ov{\overline}
\def\wt{\widetilde}
\def\o{\omega}
\def\cD{{\cal D}}
\def\cH{{\cal H}}
\def\cN{{\cal N}}
\def\cM{{\cal M}}
\def\cZ{{\cal Z}}
\def\H{{\rm Homeo}^+}
\def\Hom{{\rm Hom}}
\newcommand{\textus}[1]{\textbf{#1}}
\newtheorem{theorem}{Theorem}[section]
\newtheorem{lemma}[theorem]{Lemma}
\newtheorem{corollary}[theorem]{Corollary}
\newtheorem{proposition}[theorem]{Proposition}
\newtheorem{definition}[theorem]{Definition}
\newtheorem{remark}[theorem]{Remark}
\newtheorem{example}[theorem]{Example}
\begin{document}

\noindent
\begin{center}
{\LARGE  \bf An extension criterion for \\[0.5ex] lattice actions on the circle}
\end{center}

\begin{center}
Marc Burger \\
Forschungsinstitut f\"ur Mathematik\\
ETH-Z\"urich 
\end{center}

\hfill {\it To Bob Zimmer}

\bigskip
\section{Introduction}

Let $\Gamma < G $ be a lattice in a locally compact second countable group $G$. The aim of this paper is to establish a necessary and sufficient condition for a $\Gamma$-action by homeomorphisms of the circle to extend continuously to $G$. This condition will be expressed in terms of the real bounded Euler class of this action. Combined with classical vanishing theorems in bounded cohomology, one recovers rigidity results of Ghys, Witte-Zimmer, Navas and Bader-Furman-Shaker in a unified manner. For a survey of various approaches to the problem of classifying lattice actions on the circle we refer to the article of Witte Morris \cite{W08} in this volume.

\medskip
Let $\H(S^1)$ be the group of orientation preserving homeomorphisms of the circle and $e \in H^2 (\H(S^1), \IZ)$ the Euler class; recall that $e$ corresponds to the central extension defined by the universal covering of $\H(S^1)$. The Euler class admits a representing cocycle which is bounded and this defines a bounded class $e^b \in H^2_b(\H(S^1), \IZ)$ called the bounded Euler class. The relevance of bounded cohomology to the study of group actions on the circle comes from a result of Ghys \cite{Gh87}, namely that the bounded Euler class $\rho^*(e^b) \in H^2_b(\Gamma, \IZ)$ of an action $\rho: \Gamma \rightarrow \H(S^1)$ determines $\rho$ up to quasi-conjugation; a quasi-conjugation is a self map of the circle which is weakly cyclic order preserving and in particular not necessarily continuous, see Section 3 for details. If $e^b_{\IR}$ denotes then the bounded class obtained by considering the bounded cocycle defining $e^b$ as real valued, we call the invariant $\rho^*(e^b_{\IR}) \in H^2_b(\Gamma, \IR)$ the real bounded Euler class of $\rho$. From this point of view we have the following dichotomy (see Proposition \ref{prop3.2}):

\medskip\n
(E) $\rho^*(e^b_\IR) = 0$:  in this case, $\rho$ is quasi-conjugated to an action of $\Gamma$ by rotations; as far as the extension problem is concerned, it reduces to the properties of the restriction map
\begin{equation*}
\Hom_c(G, \IR / \IZ) \rightarrow \Hom(\Gamma, \IR / \IZ) \,.
\end{equation*}

\n
(NE) $\rho^*(e^b_\IR) \not= 0$: in this case, $\rho$ is quasi-conjugated to a minimal unbounded action, that is, every orbit is dense and the group of homeomorphisms $\rho(\Gamma)$ is not equicontinuous.

\medskip
In the first case (E) we call $\rho$ elementary and in the second (NE) non-elementary; non-elementary actions are our main object of study in this paper.

\medskip
Concerning the extension problem, an issue which has to be taken care of is the existence of a non-trivial centralizer of the action under consideration. This is illustrated by the following

\begin{example}\label{exam1.1} \rm
Let $\Gamma < {\rm PSL}(2, \IR)$ be a lattice which is non-uniform and torsion free. Since $\Gamma$ is a free group we can lift the identity to a homomorphism $\rho_k$: $\Gamma \rightarrow {\rm PSL}(2, \IR)_k \subset \H(S^1)$ into the $k$-fold cyclic covering of PSL$(2, \IR)$, and this for every $k \ge 1$. In this way we get an action which is minimal, unbounded, but for $k \ge 2$ does not extend continuously to PSL$(2, \IR)$. If $\Gamma$ is torsion free co-compact this construction applies provided $k$ divides the Euler characteristic of $\Gamma$ which is always the case for $k = 2$.
\end{example}

Thus given a minimal unbounded action one is lead to consider its topological $S^1$-factors; those are easily classified and in particular there is, up to conjugation, a unique factor
\begin{equation*}
\rho_{sp}: \Gamma \rightarrow \H(S^1)
\end{equation*}

\n
which is strongly proximal (Proposition \ref{prop3.7}). 

\medskip
This relies on arguments of Ghys which establish that the centralizer of $\rho(\Gamma)$ is a finite cyclic group; the strongly proximal quotient is then obtained by passing to the quotient by this cyclic group.

\medskip
Our main result is:
\begin{theorem}\label{theo1.2}
Let $\Gamma < G$ be a lattice in a locally compact, second countable group $G$ and
\begin{equation*}
\rho: \Gamma \rightarrow \H(S^1)
\end{equation*}
be a minimal unbounded action. Then the following are equivalent:

\begin{itemize}
\item[{\rm 1)}] The bounded real Euler class $\rho^*(e^b_{\IR})$ of $\rho$ is in the image of the restriction map
\begin{equation*}
H^2_{bc} (G, \IR) \rightarrow H^2_b (\Gamma, \IR)\,.
\end{equation*}

\item[{\rm 2)}] The strongly proximal factor $\rho_{sp}$ of $\rho$ extends continuously to $G$.
\end{itemize}
\end{theorem}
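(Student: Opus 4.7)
The direction (2) $\Rightarrow$ (1) is the more straightforward one. If $\wt\rho_{sp}:G\to\H(S^1)$ is a continuous extension of $\rho_{sp}$, then $\wt\rho_{sp}^*(e^b_\IR)\in H^2_{bc}(G,\IR)$ restricts to $\rho_{sp}^*(e^b_\IR)\in H^2_b(\Gamma,\IR)$. Since $\rho_{sp}$ is obtained from $\rho$ by dividing out the finite cyclic centralizer $Z$ of $\rho(\Gamma)$, a direct comparison of translation numbers via the universal covering of $\H(S^1)$ shows that $\rho^*(e^b_\IR)$ and $\rho_{sp}^*(e^b_\IR)$ differ by the nonzero rational scalar $|Z|$. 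Since we are working over $\IR$, rescaling the preimage produces a class in $H^2_{bc}(G,\IR)$ restricting to $\rho^*(e^b_\IR)$.

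For (1) $\Rightarrow$ (2), write $\rho^*(e^b_\IR)=r(\alpha)$ for some $\alpha\in H^2_{bc}(G,\IR)$, where $r$ denotes the restriction. My plan is to represent both sides of this equality through a common boundary realization, use the resulting equality of cocycles to promote a $\Gamma$-equivariant boundary map into $S^1$ to a $G$-equivariant one, and then argue that the measurable $G$-action so obtained is actually continuous. Concretely, I would choose an amenable $G$-space $B$ (a Poisson, or strong $G$-, boundary) whose restriction to $\Gamma$ is still amenable and doubly ergodic. The standard functorial resolution of bounded continuous cohomology then identifies both $H^2_{bc}(G,\IR)$ and $H^2_b(\Gamma,\IR)$ with the spaces of $G$- and $\Gamma$-invariant bounded alternating measurable cocycles on $B^3$, and the map $r$ becomes the inclusion of invariants.

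By Ghys's representation of the bounded Euler class through the orientation cocycle $\mathrm{or}$ on $S^1$, the class $\rho_{sp}^*(e^b_\IR)$ (equivalently, a fixed nonzero multiple of $\rho^*(e^b_\IR)$) is represented by $\varphi^*(\mathrm{or})$, where $\varphi:B\to S^1$ is the essentially unique $\Gamma$-equivariant measurable map coming from minimality and strong proximality of $\rho_{sp}$. On the other hand $\alpha$ is represented on $B^3$ by a $G$-invariant bounded alternating cocycle $c$. Since $\varphi^*(\mathrm{or})$ and $c$ represent the same class in $H^2_b(\Gamma,\IR)$, and since in the doubly ergodic setting the space of coboundaries trivializing the difference is controlled, one can arrange $c=\varphi^*(\mathrm{or})$ almost everywhere. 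The $G$-invariance of $c$ then forces $\varphi$ to intertwine the $G$-action on $B$ with a measurable $G$-action on $S^1$ extending $\rho_{sp}(\Gamma)$.

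The final and most delicate step is to upgrade this measurable $G$-action to a continuous homomorphism $\wt\rho_{sp}:G\to\H(S^1)$. Here strong proximality is indispensable: it pins down $\varphi$ uniquely up to null sets, forces its essential image to be all of $S^1$, and, through the absence of non-trivial $\rho_{sp}(\Gamma)$-invariant probability measures on $S^1$, allows one to rule out measurable but discontinuous behavior of $G$-translates. In my view this passage from a measurable cocycle identity to a genuine continuous extension is the main obstacle, and it is exactly the place where the hypotheses of minimality, strong proximality, and the cohomological condition (1) are combined; everything else in the proof is designed to make this last step available.
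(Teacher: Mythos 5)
Your proposal follows the paper's own proof essentially step for step: (2)$\Rightarrow$(1) via the covering relation $\rho_{sp}^*(e^b_\IR)=k\cdot\rho^*(e^b_\IR)$ (Lemma \ref{lem4.3}) and division by $k$ over $\IR$, and (1)$\Rightarrow$(2) via the realization of the restriction map as the inclusion of $G$-invariant into $\Gamma$-invariant alternating $L^\infty$ cocycles on a doubly ergodic amenable Poisson boundary, the orientation cocycle pulled back by the boundary map $\varphi:B\to S^1$, and the reconstruction of a continuous minimal strongly proximal $G$-action from the now $G$-invariant cocycle (Theorem \ref{theo4.5}). The only difference is one of emphasis in the final step: in the paper the passage from measurable to continuous is immediate (a measurable homomorphism from a locally compact second countable group to a second countable group is continuous), and the genuinely delicate point is instead the graph lemma (Lemma \ref{lem4.13}) showing that for each $g\in G$ the essential image of $x\mapsto(\varphi(x),\varphi(gx))$ is the graph of an element of $\H(S^1)$ --- which uses exactly the ingredients you list (no atoms, full essential image, equality of orientation cocycles).
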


The main ingredient in the proof of Theorem \ref{theo1.2} is a result which, for any countable group $\Gamma$, characterizes the bounded classes in $H^2_b(\Gamma, \IR)$ obtained from minimal strongly proximal actions in terms of certain cocycles defined on appropriate Poisson boundary of $\Gamma$; see Theorem \ref{theo4.5} in Section 4, where the result is proven in the more general context of locally compact, second countable groups. This leads to results of independent interest concerning the extent to which an action is determined by its real bounded Euler class and to the question of the possible values of its norm. These results are summarized in the following theorem and corollaries.

\begin{theorem}\label{theo1.3}
Let $\Gamma$ be a countable group.

\begin{itemize}
\item[{\rm 1)}] For a homomorphism $\rho: \Gamma \rightarrow \H(S^1)$ we have
\begin{equation*}
\| \rho^* (e^b_\IR)\| \le \fr
\end{equation*}
with equality if and only if $\rho$ is quasi-conjugated to a minimal strongly proximal action.

\item[{\rm 2)}] If two minimal strongly proximal actions $\rho_1,\rho_2$ are not conjugated, then
\begin{equation*}
\| \rho_1^*(e^b_\IR) - \rho_2^* (e^b_\IR) \| = 1\,.
\end{equation*}
\end{itemize}
\end{theorem}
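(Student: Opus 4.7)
The plan is to use the orientation cocycle as the canonical bounded representative of $e^b_\IR$ on $\H(S^1)$. Fix a basepoint $x_0\in S^1$ and set $c(g_0,g_1,g_2)=\tfrac12\,{\rm or}(g_0x_0,g_1x_0,g_2x_0)$, where ${\rm or}\in\{-1,0,+1\}$ records the cyclic orientation of a triple; this is a bounded cocycle of sup-norm $\fr$ representing $e^b_\IR$, so pulling back under any $\rho$ immediately yields $\|\rho^*(e^b_\IR)\|\le \fr$, which is the easy half of part~1). Since quasi-conjugation preserves the bounded Euler class (Ghys), one may and will work up to quasi-conjugation throughout.

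For the equality in part~1) I argue by the dichotomy (E)/(NE). Case (E) gives $\rho^*(e^b_\IR)=0$, so the norm is $0<\fr$. In case (NE), Proposition~\ref{prop3.7} furnishes a strongly proximal factor $\rho_{sp}$ obtained from $\rho$ by passing to the quotient of $S^1$ by the finite cyclic centralizer $\IZ/k$ of $\rho(\Gamma)$. Naturality of the Euler class under the $k$-to-$1$ circle covering $S^1\to S^1/(\IZ/k)\cong S^1$ then gives $\rho^*(e^b_\IR)=\tfrac{1}{k}\,\rho_{sp}^*(e^b_\IR)$ in $H^2_b(\Gamma,\IR)$, so $\|\rho^*(e^b_\IR)\|=\tfrac{1}{k}\|\rho_{sp}^*(e^b_\IR)\|$. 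It remains to show that $\|\sigma^*(e^b_\IR)\|=\fr$ for any minimal strongly proximal $\sigma$; for this I would apply Theorem~\ref{theo4.5}, which represents $\sigma^*(e^b_\IR)$ by the orientation cocycle on the Poisson boundary $B$ pulled back through a $\Gamma$-equivariant measurable map $\varphi:B\to S^1$. Strong proximality of $\sigma$ forces $\varphi_*\nu$ to be atomless of full support on $S^1$, so the pullback cocycle essentially attains the extreme values $\pm\fr$ and (the core dynamical claim) no bounded measurable coboundary drops its essential supremum. Equality in part~1) therefore occurs exactly when $k=1$, i.e.\ when $\rho$ is quasi-conjugate to a minimal strongly proximal action.

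For part~2), the bound $\|\rho_1^*(e^b_\IR)-\rho_2^*(e^b_\IR)\|\le 1$ is immediate from the triangle inequality applied to the norms obtained in part~1). For the matching lower bound, represent both classes by orientation cocycles pulled back through $\Gamma$-equivariant measurable maps $\varphi_i:B\to S^1$ furnished by Theorem~\ref{theo4.5}. The difference is valued in $\{0,\pm\fr,\pm 1\}$ and takes value $\pm 1$ precisely on those triples $(b_0,b_1,b_2)$ where the cyclic orientations of $(\varphi_1 b_0,\varphi_1 b_1,\varphi_1 b_2)$ and $(\varphi_2 b_0,\varphi_2 b_1,\varphi_2 b_2)$ are opposite. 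Because each $\rho_i$ has trivial centralizer, $\varphi_i$ is essentially uniquely determined by $\rho_i$, so the non-conjugacy hypothesis translates into the non-existence of an orientation-preserving homeomorphism of $S^1$ intertwining $\rho_1$ and $\rho_2$ and carrying $\varphi_1$ to $\varphi_2$. A boundary-dynamics argument then produces a positive-measure set of triples on which the two orientations disagree, so the essential sup equals $1$; the same coboundary non-reducibility as in part~1) promotes this to the cohomological norm.

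The main obstacle is the sharp lower bound in both parts, namely ruling out that a bounded measurable coboundary on $B^2$ could reduce the essential supremum of the (difference of) orientation cocycle(s) below the claimed value. This is a rigidity statement about the boundary presentation of Theorem~\ref{theo4.5}, and it is the point at which strong proximality (for part~1) and non-conjugacy (for part~2) have to be converted from purely dynamical statements into genuine $L^\infty$-separation estimates modulo coboundaries.
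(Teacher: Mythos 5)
Your outline follows the paper's route (boundary presentation of the class via Proposition \ref{prop4.1}, Corollary \ref{cor4.4} and Theorem \ref{theo4.5}), but the step you yourself flag as the ``main obstacle'' --- ruling out that a bounded measurable coboundary on $B^2$ could lower the essential supremum --- is left open, and it is precisely the step that carries the whole theorem. The paper does not close it by any dynamical $L^\infty$-separation estimate: it is closed for free by the fact that the identification (\ref{4.1}), $H^2_b(\Gamma,\IR)\simeq Z\,L^\infty_{\rm alt}(B^3,\IR)^\Gamma$, is an \emph{isometric} isomorphism onto the space of invariant alternating cocycles themselves, not onto cocycles modulo coboundaries. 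The reason there is nothing to quotient by is double ergodicity of the Poisson boundary (Theorem \ref{theo5.1}): any $\Gamma$-invariant function in $L^\infty_{\rm alt}(B^2,\IR)$ is essentially constant, hence zero, so an invariant cocycle on $B^3$ admits no nonzero invariant coboundary corrections and the canonical seminorm of the class equals the essential supremum of its canonical representative. Once this is in place, equality in part 1) is exactly Proposition \ref{prop4.1} combined with Corollary \ref{cor4.4}, and your covering-degree computation $\|\rho^*(e^b_\IR)\|=\frac{1}{2k}$ correctly reduces the ``only if'' direction to $k=1$.

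For part 2) the same remark applies: the class $\rho_1^*(e^b_\IR)-\rho_2^*(e^b_\IR)$ corresponds under (\ref{4.1}) to $\frac{1}{2}(\omega_1-\omega_2)$ with $\omega_i$ the $\{\pm1\}$-valued cocycles of Theorem \ref{theo4.5}, so its norm is $\frac{1}{2}\,{\rm ess\,sup}\,|\omega_1-\omega_2|$, which is $1$ as soon as $\omega_1\neq\omega_2$ on a set of positive measure (note the difference is a.e.\ valued in $\{0,\pm2\}$ for the full orientation cocycles, not $\{0,\pm\frac{1}{2},\pm1\}$ as you wrote, since each $\omega_i$ is a.e.\ nonzero). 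The ``boundary-dynamics argument'' you gesture at for producing a positive-measure set of disagreement is not needed as a separate argument: it is exactly the injectivity half of the bijection in Theorem \ref{theo4.5}, i.e.\ Lemma \ref{lem4.13} and Lemma \ref{4.14}~2), which show that $o(\varphi_1\cdot,\varphi_1\cdot,\varphi_1\cdot)=o(\varphi_2\cdot,\varphi_2\cdot,\varphi_2\cdot)$ a.e.\ forces $\rho_1$ and $\rho_2$ to be conjugate. So the architecture of your proof is right, but as written it is incomplete at its critical point; the missing ingredient is the isometry in (\ref{4.1}), a cohomological fact about amenable doubly ergodic boundaries rather than a rigidity statement to be extracted from the circle dynamics.
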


\begin{remark}\label{rem1.4} \rm 

\medskip\n
(1) The first assertion in Theorem \ref{theo1.3} echoes results obtained in \cite{B-I-W07b} concerning tight homomorphisms with values in a Lie group of hermitian type. 

\medskip\n
(2) Let ESP$(\Gamma) \subset H^2_b(\Gamma,\IR)$ denote the subset consisting of the real bounded Euler classes of minimal strongly proximal $\Gamma$-actions and $\IZ[{\rm ESP}(\Gamma)]$ its $\IZ$-span. We will show (see Section 5) that the norm takes half integral values on $\IZ[{\rm EPS}(\Gamma)]$. In this context the following question arises, namely if $\IZ[{\rm SP}(\Gamma)]$ denotes the free Abelian group on the set of conjugacy classes of minimal strongly proximal actions, can one determine the kernel of the homomorphism
\begin{align*}
\IZ[SP(\Gamma)] & \rightarrow \IZ [{\rm ESP}(\Gamma)]
\\[1ex]
\Sigma n_i [\rho_i] & \rightarrow \Sigma n_i \,\rho_i(e^b_\IR)
\end{align*}

\n
and what is its significance for the dynamics of $\Gamma$-actions on $S^1$?
\end{remark}

\medskip
The following immediate corollary is another instance of the general principle that groups whose second bounded cohomology is finite dimensional exhibit rigidity phenomena; compare for example with the case of actions by isometries on hermitian symmetric spaces (see \cite{B-I04}, \cite{B-I-W07a}).

\begin{corollary}\label{cor1.5}
Let $\Gamma$ be a countable group and assume that $H^2_b(\Gamma,\IR)$ is finite dimensional. Then there are, up to conjugation, only finitely many minimal strongly proximal $\Gamma$-actions on $S^1$.
\end{corollary}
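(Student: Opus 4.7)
The plan is to deduce the corollary as an immediate compactness consequence of Theorem~\ref{theo1.3}. Suppose, for contradiction, that there exists an infinite sequence of pairwise non-conjugate minimal strongly proximal $\Gamma$-actions $\rho_1, \rho_2, \ldots$ on $S^1$, and set $c_n := \rho_n^*(e^b_\IR) \in H^2_b(\Gamma, \IR)$. By Theorem~\ref{theo1.3}(1), each $\rho_n$ being minimal strongly proximal (hence trivially quasi-conjugate to such an action) yields $\|c_n\| = \fr$; by Theorem~\ref{theo1.3}(2), non-conjugacy of $\rho_n$ and $\rho_m$ yields $\|c_n - c_m\| = 1$ for all $n \neq m$.

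Next, I would pass to a genuine normed space. The canonical (semi)norm on $H^2_b(\Gamma, \IR)$ descends to a norm on the quotient $V$ by its null space, and $V$ is finite-dimensional by the hypothesis on $H^2_b(\Gamma, \IR)$. The images $\bar c_n \in V$ all lie in the closed ball of radius $\fr$, which is compact in $V$; yet their pairwise distances remain exactly $1$. This contradicts sequential compactness, since any convergent subsequence of $\{\bar c_n\}$ would eventually have diameter less than $1$. The assumed infinite family therefore cannot exist.

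I expect essentially no obstacle here beyond invoking Theorem~\ref{theo1.3}, which is where the real work (via the cocycle characterization of Theorem~\ref{theo4.5}) is concentrated. The only routine verifications are (a) that $\rho \mapsto \rho^*(e^b_\IR)$ is invariant under conjugation of $\rho$, so that the assignment from conjugacy classes to $V$ is well-defined and the counting argument is legitimate, and (b) that the passage from the Gromov seminorm to a norm on the quotient $V$ preserves the relevant inequalities $\|\bar c_n\| = \fr$ and $\|\bar c_n - \bar c_m\| = 1$, which is immediate.
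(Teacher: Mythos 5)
Your argument is correct and is exactly the deduction the paper intends: the paper presents Corollary \ref{cor1.5} as an immediate consequence of Theorem \ref{theo1.3}, and your compactness argument (norm-$\fr$ classes pairwise at distance $1$ cannot accumulate in a finite-dimensional space, after passing to the quotient by the null space of the seminorm) is the standard way to make "immediate" precise. The two routine points you flag, conjugation-invariance of $\rho^*(e^b_\IR)$ and the passage to the quotient norm, are indeed unproblematic.
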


Together with the information concerning centralizers of minimal actions we deduce from Theorem \ref{theo1.3},

\begin{corollary}\label{cor1.6} ~

\medskip\n
{\rm 1)} For a homomorphism $\rho: \Gamma \rightarrow \H(S^1)$, we have,
\begin{equation*}
\|\rho^* (e^b_\IR) \| \in \{0\} \cup\Big\{ \dis\frac{1}{2 k}: \;k \in\IN\Big\}
\end{equation*}

\n
and this value equals $(2k)^{-1}$ if and only if $\rho$ is quasi-conjugated to a minimal unbounded action whose centralizer is of order $k$.

\medskip\n
{\rm 2)} For minimal unbounded actions $\rho_1,\rho_2$ we have $\rho^*_1(e^b_\IR) = \rho^*_2(e^b_\IR)$ if and only if, up to conjugation,
\begin{equation*}
\rho_2(\gamma) = h(\gamma)\, \rho_1(\gamma), \;\;\gamma \in \Gamma
\end{equation*}

\medskip\n
where $h$ is a homomorphism with values in the centralizer of $\rho_1(\Gamma)$.
\end{corollary}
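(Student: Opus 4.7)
The plan is to reduce both assertions to Theorem~\ref{theo1.3} using Ghys's description of the centralizer of a minimal action and a cocycle-level identity relating the real bounded Euler class of $\rho$ to that of its strongly proximal factor.

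For (1), if $\rho^*(e^b_\IR)=0$ the norm is $0$. Otherwise Proposition~\ref{prop3.2}, together with the fact that quasi-conjugation preserves the bounded Euler class (Ghys), allows us to assume $\rho$ itself is minimal and unbounded. Its centralizer $Z\subset\H(S^1)$ is then finite cyclic of some order $k$ and, after choosing coordinates on $S^1$, is a group of rotations; the strongly proximal factor $\rho_{sp}$ is the induced action on $S^1/Z\cong S^1$ (Proposition~\ref{prop3.7}). The heart of the argument is the identity
\begin{equation*}
\rho_{sp}^*(e^b_\IR)\;=\;k\cdot\rho^*(e^b_\IR)\qquad\text{in }H^2_b(\Gamma,\IR),
\end{equation*}
which I would prove by representing $e^b_\IR$ as the coboundary of the translation number $\tau:\wt{\H(S^1)}\to\IR$. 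Any lift $\wt g\in\wt{\H(S^1)}$ of $g\in\rho(\Gamma)$ automatically satisfies $\wt g(t+1/k)=\wt g(t)+1/k$, the induced homeomorphism of $S^1/Z$ lifts on the universal cover to $s\mapsto k\,\wt g(s/k)$, and a direct computation gives its translation number as $k\,\tau(\wt g)$; passing to the coboundary yields the displayed identity. Combined with $\|\rho_{sp}^*(e^b_\IR)\|=\fr$ from Theorem~\ref{theo1.3}, this gives $\|\rho^*(e^b_\IR)\|=1/(2k)$ together with the stated characterization.

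For (2), suppose $\rho_1,\rho_2$ are minimal unbounded with $\rho_1^*(e^b_\IR)=\rho_2^*(e^b_\IR)$. By (1) their centralizers share a common order $k$, and the identity above yields $\rho_{1,sp}^*(e^b_\IR)=\rho_{2,sp}^*(e^b_\IR)$. Theorem~\ref{theo1.3}(2) then excludes non-conjugacy of the strongly proximal factors, so after a topological conjugation we may assume $\rho_{1,sp}=\rho_{2,sp}$. Both $\rho_1,\rho_2$ are thus lifts of this common action along the $k$-fold cover $S^1\to S^1/Z$, and two such lifts differ by $h(\gamma):=\rho_2(\gamma)\rho_1(\gamma)^{-1}$; since the fibres of the cover are orbits of the central subgroup $Z$, the map $h$ takes values in $Z=$ centralizer of $\rho_1(\Gamma)$ and is automatically a homomorphism. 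The converse direction is immediate from the identity in (1).

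The main obstacle is thus the cocycle-level scaling $\rho_{sp}^*(e^b_\IR)=k\cdot\rho^*(e^b_\IR)$; once it is established, both halves of the corollary follow formally from Theorem~\ref{theo1.3} and Ghys's description of the centralizer of a minimal action.
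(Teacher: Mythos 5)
Your argument is correct and follows essentially the same route as the paper, which deduces part (1) from Proposition \ref{prop3.2}, Proposition \ref{prop4.1} and Lemma \ref{lem4.3}, and part (2) from Theorem \ref{theo1.3} (2) via the common strongly proximal quotient. The only real difference is that you re-derive the scaling identity $\rho_{sp}^*(e^b_\IR)=k\,\rho^*(e^b_\IR)$ by a translation-number computation, whereas the paper obtains it as Lemma \ref{lem4.3} from the relation $M_k\,\overline{\rho_1(g)}=T^{\alpha(g)}\,\overline{\rho_0(g)}\,M_k$ with $\alpha$ bounded; the two computations are equivalent.
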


The extension criterium in Theorem \ref{theo1.2} leads to rigidity theorems when combined with the following two ingredients, namely vanishing theorems in bounded cohomology and the description of continuous homomorphisms from a locally compact group into $\H(S^1)$.

\medskip
Concerning the first ingredient, we know that the restriction map $H^2_{bc}(G, \IR) \rightarrow H^2_b(\Gamma, \IR)$ is a isomorphism in the following cases:

\medskip\n
1) Products (see \cite{B-M02}, \cite{Ka03}): $G = G_1 \times \dots \times G_n$ is a Cartesian product of locally compact second countable groups and $\Gamma$ has dense projection in every factor $G_i$.

\medskip\n
2) Higher rank Lie groups (see \cite{B-M02}): $G = \IG(k)$ where $\IG$ is a connected almost simple $k$-group, $k$ is a local field and ${\rm rank}_k\,\IG \ge 2$.

\medskip
Since it is elementary to classify continuous homomorphisms from a semi-simple Lie group over a local field into $\H(S^1)$ one obtains combining 1) and 2),

\begin{corollary}\label{cor1.7} {\rm (\cite{Gh99}, \cite{W-Z01})}

\medskip
Let $\Gamma < G : = \prod_{\alpha \in A} \IG_\alpha(k_\alpha)$ be an irreducible lattice where $k_\alpha$ are local fields, $\IG_\alpha$ is a connected, simply connected, almost simple $k_\alpha$-group of positive rank. Assume that the sum of the $k_\alpha$-ranks of $\IG_\alpha$ is at least $2$.

\medskip
For a homomorphism $\rho : \Gamma \rightarrow \H(S^1)$ one of the following holds up to quasi-conjugation:

\begin{itemize}
\item[{\rm 1)}] $\rho$ has a finite orbit.
\item[{\rm 2)}] $\rho$ is minimal unbounded and its strongly proximal factor $\rho_{sp}$ extends continuously to $G$ factoring via the projection on a factor of the form $\IG_\alpha(k_\alpha) = SL(2,\IR)$.
\end{itemize}
\end{corollary}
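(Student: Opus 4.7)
The plan is to run the dichotomy of Proposition \ref{prop3.2} and, in the non-elementary case, apply Theorem \ref{theo1.2} using the isomorphism $H^2_{bc}(G,\IR)\to H^2_b(\Gamma,\IR)$ which holds under the stated hypotheses.

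First I would examine $\rho^*(e^b_\IR)$. If it vanishes, Proposition \ref{prop3.2} gives that, up to quasi-conjugation, $\rho$ is an action by rotations, equivalently factors through a homomorphism $\Gamma\to\IR/\IZ$. Since $\Gamma$ is an irreducible lattice in a semisimple group over local fields of total rank at least $2$, Margulis' normal subgroup theorem yields that $\Gamma/[\Gamma,\Gamma]$ is finite, so $\Hom(\Gamma,\IR/\IZ)$ is finite and the rotation action has a finite orbit. This is case 1).

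If instead $\rho^*(e^b_\IR)\neq 0$, Proposition \ref{prop3.2} shows that $\rho$ is quasi-conjugate to a minimal unbounded action. Irreducibility implies that $\Gamma$ projects densely into every proper subproduct of $G$; combined with the total rank hypothesis, the isomorphism theorems cited in the excerpt (\cite{B-M02}, \cite{Ka03}) then give that the restriction $H^2_{bc}(G,\IR)\to H^2_b(\Gamma,\IR)$ is an isomorphism. In particular $\rho^*(e^b_\IR)$ lies in its image, so Theorem \ref{theo1.2} provides a continuous extension $\widetilde\rho_{sp}:G\to\H(S^1)$ of the strongly proximal factor.

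It remains to identify the shape of $\widetilde\rho_{sp}$. Its image is a minimal strongly proximal subgroup of $\H(S^1)$, whose closure, by Ghys' classification of such subgroups, is conjugate to $\mathrm{PSL}(2,\IR)$ in its standard action on $S^1$. I would then combine three facts: (a) distinct factors $\IG_\alpha(k_\alpha)$ commute in $G$, while the centralizer of $\mathrm{PSL}(2,\IR)$ in $\H(S^1)$ is trivial; (b) continuous homomorphisms from an almost simple group over a non-archimedean local field into a real Lie group are trivial, since the target has no small subgroups while the source has arbitrarily small compact open subgroups; (c) each $\IG_\alpha$ is almost simple. These force exactly one $\IG_\alpha(k_\alpha)$ to act nontrivially, with $k_\alpha$ archimedean and $\IG_\alpha(\IR)$ locally isomorphic to $\mathrm{PSL}(2,\IR)$; simple connectedness then gives $\IG_\alpha(\IR)\cong\mathrm{SL}(2,\IR)$, producing case 2). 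The delicate part of the argument is precisely this final identification: the essential input is Ghys' structure theorem for minimal strongly proximal subgroups of $\H(S^1)$, after which commutation of factors and the rigidity of continuous Lie representations of simple non-archimedean groups reduce matters to the classical classification of real simple groups mapping into $\mathrm{PSL}(2,\IR)$.
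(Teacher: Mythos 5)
Your overall route is the one the paper intends: the paper gives no detailed proof of Corollary \ref{cor1.7}, but states that it follows by combining the dichotomy of Proposition \ref{prop3.2}, the isomorphism $H^2_{bc}(G,\IR)\simeq H^2_b(\Gamma,\IR)$ for products with dense projections and for higher rank groups, Theorem \ref{theo1.2}, and the elementary classification of continuous homomorphisms of semisimple groups into $\H(S^1)$. Your treatment of the elementary case (finite abelianization via the normal subgroup theorem) and of the extension step (irreducibility gives dense projections, hence surjectivity of restriction, hence Theorem \ref{theo1.2} applies) is correct and matches this.

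The one step I would not accept as written is the identification of the extension. You assert that the image of $(\rho_{sp})^{\rm ext}$ is a minimal strongly proximal subgroup of $\H(S^1)$ whose \emph{closure} is, ``by Ghys' classification of such subgroups,'' conjugate to ${\rm PSL}(2,\IR)$. No such classification of arbitrary minimal strongly proximal subgroups exists: a cocompact Fuchsian group is a closed (discrete) subgroup of $\H(S^1)$ acting minimally and strongly proximally, and its closure is itself, not ${\rm PSL}(2,\IR)$. The correct statement is the classification of \emph{continuous} minimal actions of \emph{locally compact} groups (Theorem \ref{theo1.8}, resting on Lemma \ref{lem6.1} and Theorem \ref{theo6.3}), and it carries a third alternative — ${\rm Ker}\,\pi$ open — which is exactly the case your ``closure'' claim sweeps under the rug and which must be excluded explicitly: here an open normal subgroup of $G=\prod_\alpha\IG_\alpha(k_\alpha)$ meets each archimedean factor (which is connected) in the whole factor and each non-archimedean simply connected isotropic factor in an open normal, hence full, subgroup, so an open kernel forces $(\rho_{sp})^{\rm ext}$ to be trivial, a contradiction. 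Your points (b) and (c) contain precisely the ingredients needed for this exclusion, but they are deployed \emph{after}, and in reliance on, the unjustified closure claim; the argument should instead run: apply Theorem \ref{theo1.8} to the continuous action $(\rho_{sp})^{\rm ext}$, rule out the rotation alternative by strong proximality and the open-kernel alternative as above, conclude the image is ${\rm PSL}(2,\IR)_k$ with $k=1$ because the centralizer is trivial (Corollary \ref{cor3.6} and Lemma \ref{lem6.2}), and only then localize to a single archimedean factor isomorphic to $SL(2,\IR)$ by your arguments (a) and (b). With that reordering the proof is complete.
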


Concerning locally compact subgroups of $\H(S^1)$ one has, owing to the solution of Hilbert's fifth problem, a wealth of information and in particular those which are connected and minimal have a simple classification; they are up to conjugation, either Rot the subgroup of rotations or PSL$(2,\IR)_k$, the $k$-fold cyclic covering of PSL$(2,\IR)$ (see \cite{F01}, \cite{Gh01}). When studying continuous homomorphisms from a locally compact group $G$ into $\H(S^1)$ one has to deal with the fact that the image is not necessarily closed. In any case we have,

\begin{theorem}\label{theo1.8}
Let $G$ be a locally compact group and $\pi: G \rightarrow \H(S^1)$ a continuous and minimal action. Then one of the following holds:

\begin{itemize}
\item[{\rm 1)}] $\pi$ is conjugated into the group Rot of rotations and has dense image in it.
\item[{\rm 2)}] $\pi$ surjects onto PSL$(2, \IR)_k$ for some $k \ge 1$, up to conjugation.
\item[{\rm 3)}] ${\rm Ker} \,\pi$ is an open subgroup of $G$.
\end{itemize}
\end{theorem}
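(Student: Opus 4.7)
The plan is to isolate the open-kernel case and then, in the remaining situation, reduce the problem to the classification of connected Lie groups acting faithfully and transitively on $S^1$. Set $K := \ker \pi$ and $\bar G := G/K$ with the quotient topology, and denote by $\bar\pi : \bar G \to \H(S^1)$ the induced continuous injection. If $K$ is open, we are immediately in case 3), so assume henceforth that $K$ is not open. Then $\bar G$ is a non-discrete locally compact Hausdorff group acting effectively and jointly continuously on $S^1$, so by the one-dimensional case of the Hilbert--Smith problem (Montgomery--Zippin), $\bar G$ is a Lie group of positive dimension. In particular its identity component $\bar G_0$ is a non-trivial connected Lie group, normal in $\bar G$.

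Next I would show that $\bar G_0$ acts transitively on $S^1$. Its fixed-point set $F$ is closed and, by normality of $\bar G_0$, invariant under all of $\bar G$; minimality of $\pi$ then forces $F = \emptyset$ or $F = S^1$, the second possibility being excluded by injectivity of $\bar\pi$ together with the non-triviality of $\bar G_0$. With no fixed points, each $\bar G_0$-orbit is a non-degenerate connected subset of $S^1$, hence either all of $S^1$ or a proper arc. A proper arc orbit would force the two endpoints of its closure to form a $\bar G_0$-invariant $2$-point set, but a connected group cannot act non-trivially on a discrete space of cardinality two; this contradicts $F = \emptyset$. Therefore every $\bar G_0$-orbit equals $S^1$.

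Now I would invoke the classification cited in \cite{F01}, \cite{Gh01}: a connected Lie group acting faithfully and transitively on $S^1$ by orientation-preserving homeomorphisms is, up to conjugation in $\H(S^1)$, either ${\rm Rot}$ or ${\rm PSL}(2,\IR)_k$ for some $k \ge 1$. After such a conjugation, $\bar\pi(\bar G_0)$ is one of these two subgroups. Since $\bar G_0 \triangleleft \bar G$, the group $\pi(G) = \bar\pi(\bar G)$ normalizes $\bar\pi(\bar G_0)$ in $\H(S^1)$. A direct calculation shows that the normalizer of ${\rm Rot}$ in $\H(S^1)$ is ${\rm Rot}$ itself (any orientation-preserving homeomorphism conjugating each rotation to a rotation is itself a rotation), and likewise the normalizer of ${\rm PSL}(2,\IR)_k$ in $\H(S^1)$ equals ${\rm PSL}(2,\IR)_k$, because the non-trivial outer automorphism of ${\rm PSL}(2,\IR)_k$ is orientation-reversing and its centralizer in $\H(S^1)$ reduces to its center. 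Combined with $\pi(G) \supseteq \bar\pi(\bar G_0)$, this yields either $\pi(G) = {\rm Rot}$, giving case 1) with surjective (hence dense) image, or $\pi(G) = {\rm PSL}(2,\IR)_k$, which is case 2).

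The step I expect to be the main obstacle is the reduction to a Lie group via Hilbert--Smith for $S^1$: one has to rule out the possibility that $\bar G$ contains arbitrarily small compact normal subgroups when acting effectively on a $1$-manifold. A more self-contained route would be to apply the Gleason--Yamabe theorem to an open, almost-connected subgroup of $G$ to obtain a compact normal subgroup with Lie quotient, to observe that the $\pi$-image of any compact subgroup of $G$ lies in a conjugate of ${\rm Rot}$, and then to propagate the Lie structure back through a centralizer-and-quotient bookkeeping; this uses the same underlying input but distributes it across several intermediate steps.
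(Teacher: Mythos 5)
Your argument is correct, but it takes a genuinely different route from the paper's. The paper does not quotient by the kernel; instead it restricts $\pi$ to the amenable radical $A(G)$ and runs the trichotomy (\ref{3.6}) on $\pi(A(G))$: the exceptional-minimal-set case is killed by minimality of $\pi(G)$; if $\pi(A(G))$ is itself minimal, amenability forces $(\pi|_{A(G)})^*(e^b_\IR)=0$, so by Proposition \ref{prop3.2} one conjugates $\pi(A(G))$ into a dense subgroup of ${\rm Rot}$ and concludes $\pi(G)\subset\cN_{\cH^+}({\rm Rot})={\rm Rot}$ (alternative 1, with dense but possibly non-surjective image); in the finite-orbit case one passes to $G_\pi=G/(A(G)\cap{\rm Ker}\,\pi)$, uses the structure theorem \ref{theo6.3} for $G/A(G)$, and splits according to whether $\pi$ is trivial on $(G_\pi)^0$ (then ``no small subgroups'' in $\H(S^1)$ gives open kernel) or not (then Lemma \ref{lem6.1} gives ${\rm PSL}(2,\IR)_k$ and Lemma \ref{lem6.2} upgrades this to all of $\pi(G)$). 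You instead front-load the open-kernel case, invoke the one-dimensional Hilbert--Smith theorem to make $G/{\rm Ker}\,\pi$ a Lie group, prove transitivity of the identity component, and quote the classification of transitive faithful connected Lie actions on $S^1$; the two arguments reconverge exactly at the normalizer computations of Lemma \ref{lem6.2}. What your route buys is brevity and independence from bounded cohomology; what it costs is reliance on Hilbert--Smith for $S^1$ as a black box --- though, as you observe, that unwinds to Gleason--Yamabe plus the fact that a compact subgroup of $\H(S^1)$ contained in a small identity neighborhood is trivial, which is precisely the ``no small subgroups'' input the paper uses inline. The paper's detour through $A(G)$ also exhibits the non-surjective dense-rotation case of alternative 1 directly, whereas your analysis shows that when the kernel is not open the rotation case is automatically surjective (consistent, since the dense non-closed case forces an open kernel).

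Two small loose ends you should tie up. First, in the transitivity argument the orbit could a priori be $S^1$ minus a single point (so its closure is all of $S^1$ and has no ``two endpoints''); that case is dispatched by noting the complementary point is then fixed by $\bar G_0$. Second, make sure the classification you cite is stated for transitive \emph{faithful} actions of connected Lie groups (not merely for closed connected subgroups of $\H(S^1)$), since a priori $\bar\pi(\bar G_0)$ need not be closed; transitivity plus injectivity of $\bar\pi$ is what rules out a dense one-parameter image and yields equality with ${\rm Rot}$ or ${\rm PSL}(2,\IR)_k$ on the nose.
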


\n
From Theorem \ref{theo1.8} and the vanishing result for products mentioned above we obtain,

\begin{corollary}\label{cor1.9} {\rm (\cite{N05}, \cite{Ba-F-S06})}

\medskip
Let $G = G_1 \times \dots \times G_n$ be a product of locally compact second countable groups and $\Gamma < G$ a lattice with dense projections on each factor $G_i$. Assume that $\rho: \Gamma \rightarrow \H(S^1)$ is minimal unbounded. Then the strongly proximal quotient $\rho_{sp}$ extends continuously to $G$,
\begin{equation*}
(\rho_{sp})^{\rm ext} : G \rightarrow \H(S^1)
\end{equation*}
and we have one of the following, 
\begin{itemize}
\item[{\rm 1)}] ${\rm Ker} (\rho_{sp})^{\rm ext}$ is open in $G$.
\item[{\rm 2)}] Up to conjugation $(\rho_{sp})^{\rm ext}$ factors via a projection onto some factor $G_i$ followed by a continuous surjection onto PSL$(2, \IR) \subset \H(S^1)$.
\end{itemize}
\end{corollary}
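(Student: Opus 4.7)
My plan is to combine Theorem \ref{theo1.2} with the product vanishing result for bounded cohomology to produce the continuous extension $(\rho_{sp})^{\rm ext}$, and then invoke Theorem \ref{theo1.8} on this extension to split into the two alternatives of the statement.

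By the dense-projection hypothesis, the restriction map $H^2_{bc}(G, \IR) \to H^2_b(\Gamma, \IR)$ is an isomorphism (result~1 cited just before Corollary \ref{cor1.7}); hence $\rho^*(e^b_\IR)$ automatically lies in its image, and Theorem \ref{theo1.2} (applied to $\rho$, which is minimal unbounded) yields a continuous extension $(\rho_{sp})^{\rm ext}: G \to \H(S^1)$ of the strongly proximal factor. This extension is itself minimal, since for any $x \in S^1$ the $G$-orbit contains the dense $\rho_{sp}(\Gamma)$-orbit, so Theorem \ref{theo1.8} applies.

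I now dispatch the three alternatives of Theorem \ref{theo1.8}. Alternative 1) is ruled out: a subgroup of Rot is equicontinuous and preserves Lebesgue measure on $S^1$, which is incompatible with the strong proximality of $\rho_{sp}(\Gamma)$. Alternative 3) is exactly case~1) of the corollary. In alternative 2), up to conjugation $(\rho_{sp})^{\rm ext}$ is a continuous surjection onto $PSL(2, \IR)_k$ for some $k \geq 1$; here I use that $\rho_{sp}$, being minimal strongly proximal, has trivial centralizer in $\H(S^1)$ by the construction recalled before Proposition \ref{prop3.7}, whereas the order-$k$ center of $PSL(2, \IR)_k$ centralizes $\rho_{sp}(\Gamma)$. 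This forces $k = 1$ and the image to equal $PSL(2, \IR)$.

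It remains to factor $(\rho_{sp})^{\rm ext}: G \to PSL(2, \IR)$ through a single projection. Each $G_i$ is normal in $G$, so $(\rho_{sp})^{\rm ext}(G_i)$ is an abstract normal subgroup of the abstractly simple group $PSL(2, \IR)$; hence each image is either trivial or the full group. Pairwise commutativity of the $G_i$ together with the fact that $PSL(2, \IR)$ is centerless then force at most one image to be non-trivial, and surjectivity picks out a unique index $i_0$ with $(\rho_{sp})^{\rm ext}(G_{i_0}) = PSL(2, \IR)$ and $G_j \subset \ker (\rho_{sp})^{\rm ext}$ for $j \neq i_0$, giving case~2). The most delicate point in this plan is the elimination of $k \geq 2$ in alternative 2): it hinges on the triviality of the centralizer of a minimal strongly proximal action on $S^1$, which is what motivates passing from $\rho$ to $\rho_{sp}$ in the first place and is precisely the content of the construction underlying Proposition \ref{prop3.7}.
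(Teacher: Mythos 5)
Your proposal is correct and follows exactly the route the paper indicates (the paper derives Corollary \ref{cor1.9} in one line from Theorem \ref{theo1.8} and the product vanishing theorem, leaving the details implicit): the isomorphism $H^2_{bc}(G,\IR)\simeq H^2_b(\Gamma,\IR)$ plus Theorem \ref{theo1.2} produce the extension, and Theorem \ref{theo1.8} applied to the (minimal) extension yields the dichotomy. Your supplementary details — ruling out the rotation case by strong proximality, forcing $k=1$ via the triviality of $\cZ_{\cH^+}(\rho_{sp}(\Gamma))$ (Corollary \ref{cor3.6} against Lemma \ref{lem6.2}), and factoring through a single $G_{i_0}$ by simplicity and centerlessness of ${\rm PSL}(2,\IR)$ — are all sound.
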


Finally, as shown by Bader, Furman and Shaker, there is also in the context of actions on the circle a commensurator superrigidity theorem, which we state in a way which is somewhat different, but equivalent to their Thm.~C in \cite{Ba-F-S06}.

\begin{theorem}\label{theo1.10} {\rm (\cite{Ba-F-S06})}

\medskip
Let $G$ be a locally compact second countable group, $\Gamma < G$ a lattice and $\Lambda < G$ a subgroup such that $\Gamma \subset \Lambda \subset {\rm Comm}_G \,\Gamma$ and $\Lambda$ is dense in $G$. Let $\rho: \Lambda \rightarrow \H(S^1)$ be a homomorphism such that $\rho(\Gamma)$ is minimal and unbounded. Then the strongly proximal quotient $\rho_{sp}$ extends continuously to $G$, 
\begin{equation*}
(\rho_{sp})^{\rm ext} : G \rightarrow \H(S^1)
\end{equation*}
and we have one of the following,

\begin{itemize}
\item[{\rm 1)}] ${\rm Ker}(\rho_{sp})^{\rm ext}$ is open of infinite index in $G$.
\item[{\rm 2)}] $(\rho_{sp})^{\rm ext}$ surjects onto ${\rm PSL}(2, \IR) \subset \H(S^1)$.
\end{itemize}
\end{theorem}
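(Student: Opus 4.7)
The strategy is to apply the extension criterion Theorem~\ref{theo1.2} to the restriction $\rho|_\Gamma$, and then use a centralizer argument to identify the resulting continuous map on all of $\Lambda$. The dichotomy at the end follows by feeding the extension into Theorem~\ref{theo1.8}.

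\textbf{Step 1 (main technical step).} We verify that $\rho^*(e^b_\IR) \in H^2_b(\Gamma,\IR)$ lies in the image of $H^2_{bc}(G,\IR) \to H^2_b(\Gamma,\IR)$. Choose a doubly ergodic amenable Poisson--Furstenberg $G$-boundary $B$; by the Burger--Monod boundary resolution, the bounded cohomology of $\Gamma$ is isometrically realized by $\Gamma$-invariant bounded Borel cocycles on $B^3$, and $H^2_{bc}(G,\IR)$ is identified with the $G$-invariant part. Using Theorem~\ref{theo4.5}, $\rho^*(e^b_\IR)|_\Gamma$ is represented by the pullback of the orientation cocycle on $(S^1)^3$ through a $\Gamma$-equivariant Borel map $\varphi : B \to \cM^1(S^1)$. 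Because $\rho$ is defined on all of $\Lambda$ and $B$ remains amenable as a $\Lambda$-space (via $\Lambda \subset G$), $\varphi$ can in fact be chosen $\Lambda$-equivariant, producing a $\Lambda$-invariant representative $c \in L^\infty(B^3)$. The $G$-action on $L^\infty(B^3)$ is weak-$*$ continuous, and $\Lambda \subset G$ is dense, so $\Lambda$-invariance of $c$ upgrades to $G$-invariance. This step, which requires tracking the Burger--Monod resolution through the non-closed subgroup $\Lambda$ and exploiting the commensurator relation $\Lambda \subset {\rm Comm}_G(\Gamma)$ to close up the resolution, is what I expect to be the main obstacle.

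\textbf{Step 2.} Theorem~\ref{theo1.2} now yields a continuous $\pi : G \to \H(S^1)$ with $\pi|_\Gamma = \rho_{sp}|_\Gamma$. To upgrade to $\pi|_\Lambda = \rho_{sp}|_\Lambda$, fix $\lambda \in \Lambda$ and set $\Gamma_\lambda := \Gamma \cap \lambda^{-1}\Gamma\lambda$, a finite index subgroup of $\Gamma$. For every $\gamma \in \Gamma_\lambda$, applying the homomorphism relation and $\pi|_{\lambda\Gamma_\lambda\lambda^{-1}} = \rho_{sp}|_{\lambda\Gamma_\lambda\lambda^{-1}}$ yields
\begin{equation*}
\pi(\lambda)\rho_{sp}(\gamma)\pi(\lambda)^{-1}
= \rho_{sp}(\lambda\gamma\lambda^{-1})
= \rho_{sp}(\lambda)\rho_{sp}(\gamma)\rho_{sp}(\lambda)^{-1},
\end{equation*}
so $\rho_{sp}(\lambda)^{-1}\pi(\lambda)$ centralizes $\rho_{sp}(\Gamma_\lambda)$ in $\H(S^1)$. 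Since $\rho_{sp}$ is minimal and strongly proximal, its restriction to the finite index subgroup $\Gamma_\lambda$ is still minimal and strongly proximal (Dirac-approximation plus a standard orbit-closure argument), and a minimal strongly proximal action on $S^1$ has trivial centralizer in $\H(S^1)$. Hence $\pi(\lambda) = \rho_{sp}(\lambda)$.

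\textbf{Step 3 (dichotomy).} Apply Theorem~\ref{theo1.8} to $\pi$, which is minimal since $\pi(\Gamma) = \rho_{sp}(\Gamma)$ is minimal. Case~(1) of Theorem~\ref{theo1.8} (conjugation into Rot with dense image) is ruled out because $\rho_{sp}(\Gamma)$ is unbounded. In case~(3) the kernel ${\rm Ker}\,\pi$ is open; it cannot have finite index, for then $\pi(G)$ would be finite and $\rho_{sp}(\Gamma)$ bounded, so ${\rm Ker}\,\pi$ has infinite index, giving conclusion~(1) of the theorem. In case~(2), $\pi$ surjects (up to conjugation) onto ${\rm PSL}(2,\IR)_k$; the center $\IZ/k$ of this group centralizes $\pi(G)$, hence centralizes $\rho_{sp}(\Gamma) \subset \pi(G)$, so by triviality of the centralizer of a minimal strongly proximal action we must have $k=1$, giving conclusion~(2).
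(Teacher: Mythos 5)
The paper does not prove Theorem~\ref{theo1.10} by the route you take: it derives it from the extension criterion (Theorem~\ref{theo1.2}) combined with Theorem~\ref{theo1.11}, the result from \cite{I08} that the image of $H^2_b(\Lambda,\IR)\to H^2_b(\Gamma,\IR)$ coincides with the image of $H^2_{bc}(G,\IR)$, and it explicitly leaves the deduction as an exercise. Your Steps~2 and~3 are essentially that exercise, and they are correct: the centralizer argument identifying $\pi(\lambda)$ with $\rho_{sp}(\lambda)$ (using that $\rho_{sp}|_{\Gamma_\lambda}$ is still minimal and strongly proximal, hence has trivial centralizer by Corollary~\ref{cor3.6}), and the case analysis via Theorem~\ref{theo1.8} together with Lemma~\ref{lem6.2}~(3) to force $k=1$, are exactly what is needed. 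One point you should add in Step~2: for $\rho_{sp}$ to be defined on all of $\Lambda$ you need $\rho(\Lambda)$ to normalize $\cZ_{\cH^+}(\rho(\Gamma))$; this holds because the centralizer of a minimal unbounded action is unchanged under passage to finite-index subgroups (the maximal contractible intervals $[x,\theta(x))$ are the same), so $\cZ_{\cH^+}\big(\rho(\lambda\Gamma\lambda^{-1})\big)=\cZ_{\cH^+}\big(\rho(\Gamma)\big)$.

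The genuine gap is where you anticipated it, in Step~1, and the justification you offer there does not work as stated. Amenability of the $G$-action on $B$ passes to \emph{closed} subgroups; $\Lambda$ is dense and not closed, so ``$B$ remains amenable as a $\Lambda$-space via $\Lambda\subset G$'' is not a legitimate inference, and you cannot obtain a $\Lambda$-equivariant map by running the amenability machinery for $\Lambda$ directly. The mechanism that actually makes the commensurator hypothesis work is essential uniqueness of the boundary map: take the $\Gamma$-equivariant $\varphi:B\to S^1$ for $(\rho|_\Gamma)_{sp}$ from Corollary~\ref{cor4.4}; for $\lambda\in\Lambda$ the map $x\mapsto\rho_{sp}(\lambda)^{-1}\varphi(\lambda x)$ is $\Gamma_\lambda$-equivariant, and since $\Gamma_\lambda$ is again a lattice (so $B$ is doubly ergodic and amenable for it by Theorem~\ref{theo5.1}) and $\rho_{sp}(\Gamma_\lambda)$ is still minimal strongly proximal, Theorem~\ref{theo2.1} applied to $x\mapsto \frac{1}{2}\big(\delta_{\varphi(x)}+\delta_{\rho_{sp}(\lambda)^{-1}\varphi(\lambda x)}\big)$ forces the two maps to agree almost everywhere. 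This yields $\Lambda$-equivariance of $\varphi$, hence $\Lambda$-invariance of the cocycle $o\big(\varphi(x),\varphi(y),\varphi(z)\big)$; your density plus weak-$*$ continuity upgrade to $G$-invariance is then sound (continuity of $g\mapsto\langle g\omega,h\rangle$ for $h\in L^1(B^3)$), and Theorem~\ref{theo4.5} produces the continuous extension. With this repair your argument is complete and, for this special case, bypasses the citation of \cite{I08} entirely.
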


Let us make the following comment about the hypothesis of Theorem \ref{theo1.10}. If $\rho: \Lambda \rightarrow \H(S^1)$ is a homomorphism such that $\rho(\Gamma)$ is unbounded then either $\rho(\Gamma)$ is minimal or there is an exceptional minimal set $K \subset S^1$ (see Section 3) which is easily seen to be $\rho(\Lambda)$-invariant. Thus $\rho$ is quasiconjugated to an action of $\Lambda$ for which $\Gamma$ is minimal and unbounded.

\medskip
The above result follows easily from the extension criterion (Theorem \ref{theo1.2}) and the following general fact concerning bounded cohomology:

\begin{theorem}\label{theo1.11} {\rm (\cite{I08})}

\medskip
Let $\Gamma < G$ and $\Gamma < \Lambda < {\rm Comm}_G \,\Gamma$ be as in Theorem {\rm \ref{theo1.10}}, in particular $\Lambda$ is dense in $G$. Then the image of the restriction map
\begin{equation*}
H^2_b (\Lambda, \IR) \rightarrow H^2_b (\Gamma, \IR)
\end{equation*}
coincides with the image of $H^2_{bc}(G, \IR)$ in $H_b^2(\Gamma,\IR)$.
\end{theorem}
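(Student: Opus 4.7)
The plan is to apply Monod's functorial approach to bounded cohomology through amenable boundaries, and then to exploit the density of $\Lambda$ in $G$ via weak-$*$ continuity of the regular representation.

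First I fix an admissible probability measure $\mu$ on $G$ and let $(B, \nu)$ be its Poisson boundary. This is an amenable $G$-space on which the diagonal $G$-action on $B \times B$ is doubly ergodic (Kaimanovich). By Monod's theorem, $H^\bullet_{bc}(G, \IR)$ is computed by the complex $L^\infty_{\mathrm{alt}}(B^{\bullet+1})^G$. Since $\Gamma < G$ is a lattice, $(B, \nu)$ is also a doubly ergodic amenable $\Gamma$-space, so $H^\bullet_b(\Gamma, \IR)$ is computed by $L^\infty_{\mathrm{alt}}(B^{\bullet+1})^\Gamma$, with the restriction map from $G$ to $\Gamma$ realized as the inclusion of invariants. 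The analogous claim for the intermediate group $\Lambda$ requires checking that the restricted action is $\Lambda$-amenable and doubly $\Lambda$-ergodic; the latter is automatic because already $\Gamma \subset \Lambda$ is doubly ergodic on $B \times B$.

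In degree $2$, the alternating $G$-invariants on $B^2$ vanish by ergodicity, so $H^2_b$ of each of $G$, $\Lambda$, $\Gamma$ is simply the cocycle kernel inside $L^\infty_{\mathrm{alt}}(B^3)^\bullet$, and all three restriction maps become inclusions of these invariant subspaces. Theorem \ref{theo1.11} therefore reduces to the identity
\begin{equation*}
L^\infty(B^3)^\Lambda = L^\infty(B^3)^G.
\end{equation*}
For this I would use that for a locally compact second countable group acting measurably and quasi-invariantly on a standard Borel probability space, the twisted regular representation on $L^1(B^3)$ is strongly continuous, so the dual $G$-action on $L^\infty(B^3)$ is weak-$*$ continuous. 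Given $f \in L^\infty(B^3)^\Lambda$ and $g \in G$, pick a net $\lambda_\alpha \to g$ in $\Lambda$ (possible by density); weak-$*$ continuity yields $g \cdot f = \lim_\alpha \lambda_\alpha \cdot f = f$, so $\Lambda$-invariance upgrades to $G$-invariance.

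The main obstacle I anticipate is establishing the $\Lambda$-amenability of $(B, \nu)$: since $\Lambda$ need not be closed in $G$, Zimmer's standard restriction principle does not apply directly. I would work with the characterization of amenability via existence of $G$-equivariant conditional expectations on coefficient modules of the form $L^\infty(B, E)$, which restrict automatically to $\Lambda$-equivariant expectations and thus yield a strong $\Lambda$-resolution in the sense of Monod. Once this is in place, the cohomological reduction and the weak-$*$ continuity argument are both formal, and the theorem follows.
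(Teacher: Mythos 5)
First, a point of reference: the paper does not prove Theorem \ref{theo1.11} at all --- it is quoted from \cite{I08} and the proof is explicitly deferred there --- so your attempt can only be measured against the standard argument for such statements, not against a proof in this text.

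Your proposal has a genuine gap at exactly the step you flag, and the fix you sketch does not work. To compute $H^2_b(\Lambda,\IR)$ as $Z\,L^\infty_{\rm alt}(B^3,\IR)^\Lambda$ you need $(B,\nu_B)$ to be an amenable $\Lambda$-space, and this is precisely what is unavailable for a dense, non-closed subgroup: amenability of an action does not pass from a subgroup $\Gamma$ to an overgroup $\Lambda$, nor is it inherited by non-closed subgroups of $G$. Your proposed remedy --- ``restricting'' $G$-equivariant conditional expectations --- is not a proof: the conditional expectation witnessing amenability of the $G$-action lives on $L^\infty(B\times G,E)$ rather than $L^\infty(B\times\Lambda,E)$, and relative injectivity of $L^\infty(B,E)$ as a $\Lambda$-module must be tested against arbitrary coefficient $\Lambda$-modules, not only restrictions of $G$-modules, so there is nothing to restrict. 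The telltale symptom is that your argument never uses the hypothesis $\Lambda\subset{\rm Comm}_G\,\Gamma$; if it were correct it would prove the statement for an arbitrary dense subgroup containing $\Gamma$. The standard route avoids saying anything about resolutions for $\Lambda$ and uses only the isomorphism $H^2_b(\Gamma',\IR)\simeq Z\,L^\infty_{\rm alt}(B^3,\IR)^{\Gamma'}$ for $\Gamma$ and its finite-index subgroups $\Gamma'$ (all of which are lattices, so Theorem \ref{theo5.1} applies). Given $\kappa\in H^2_b(\Lambda,\IR)$, let $\omega$ be the canonical $\Gamma$-invariant cocycle representing ${\rm res}_\Gamma\,\kappa$. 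For $\lambda\in\Lambda$, both $\omega$ and $\lambda^*\omega$ lie in $Z\,L^\infty_{\rm alt}(B^3,\IR)^{\Gamma'}$ with $\Gamma'=\Gamma\cap\lambda^{-1}\Gamma\lambda$ of finite index, and both represent ${\rm res}_{\Gamma'}\,\kappa$ because inner automorphisms of $\Lambda$ act trivially on $H^\bullet_b$; uniqueness of the canonical representative for $\Gamma'$ forces $\lambda^*\omega=\omega$. Only at this point does your weak-$*$ continuity and density argument --- which is correct as stated --- enter, upgrading $\Lambda$-invariance of $\omega$ to $G$-invariance and exhibiting the class as coming from $H^2_{bc}(G,\IR)$ via the inclusion (\ref{5.1}). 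The reverse inclusion is mere functoriality of restriction along $\Gamma<\Lambda<G$.
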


We will leave to the interested reader the exercise of deducing Theorem \ref{theo1.10} from Theorem \ref{theo1.11} and refer to \cite{I08} for elementary proofs of the isomorphism results for products and higher rank groups mentioned above, as well as the proof of Theorem \ref{theo1.11}.

\bigskip\noindent
{\bf Acknowledgments:} Thanks to Luis Hernandez for his kind invitation to CIMAT where this work was completed. Thanks to U. Bader and A. Furman for sharing with me their result on boundary maps.

\section{Boundary maps}
\setcounter{equation}{0}

In this section we present a general existence and uniqueness result concerning measurable equivariant maps which is due to Bader-Furman and is of general interest in rigidity theory.

\medskip
Here and in the sequel $G$ is a second countable locally compact group, $G \times M \rightarrow M$ is a continuous action on a compact metrisable space $M$, $\mu \in \cM^1(G)$ is a spread out probability measure on $G$ and $(B,\nu_B)$ is a standard Lebesgue $G$-space such that $\nu_B$ is $\mu$-stationary.

\begin{theorem}\label{theo2.1} {\rm (\cite{Ba-F06})}

\medskip
Assume that 

\medskip\n
{\rm 1)} the $G$-action is minimal and strongly proximal.

\medskip\n
{\rm 2)} For every sequence $(g_n)_{n \ge 1}$ in $G$ there exists a subsequence $(n_k)_{k \ge 1}$ such that 
\begin{align*}
M  & \rightarrow  M
\\
m & \longmapsto g_{n_k} (m)
\end{align*}
converges pointwise.

\medskip
Then every measurable $G$-equivariant map 
\begin{equation*}
\varphi: B \rightarrow \cM^1 (M)
\end{equation*}
takes values in the subset of Dirac measures.
\end{theorem}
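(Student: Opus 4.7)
Plan. My plan is to use the sequential compactness in (2) to promote the strong proximality in (1) into a pointwise contraction statement in $M^M$, and then to transfer this contraction through the equivariant map $\varphi$ using the $\mu$-stationarity of $\nu_B$.

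The first observation is that, for any $\nu \in \cM^1(M)$, combining hypotheses (1) and (2) produces a Borel map $\sigma : M \to M$ lying in the pointwise closure $\overline G \subset M^M$, such that $\sigma_* \nu = \delta_{m}$ for some $m = m(\nu) \in M$. Indeed, strong proximality gives $g_n$ with $(g_n)_* \nu \to \delta_m$ weakly; refining by (2) to a subsequence converging pointwise to $\sigma$, dominated convergence against $C(M)$ yields $(g_n)_* \nu \to \sigma_* \nu$, so $\sigma_* \nu = \delta_m$, i.e.\ $\sigma \equiv m$ on a set of full $\nu$-measure. Applying this with $\nu = \varphi(b)$ and using a measurable selection argument (made possible because $\overline G \subset M^M$ is a Borel set and (2) is uniform in $b$), one obtains measurable $G$-equivariant maps $b \mapsto \sigma_b \in \overline G$ and $b \mapsto x(b) \in M$, with $(\sigma_b)_* \varphi(b) = \delta_{x(b)}$ for $\nu_B$-a.e.\ $b$.

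The remaining, and main, step is to upgrade this to $\varphi(b) = \delta_{x(b)}$. For this I would invoke the $\mu$-stationarity of $\nu_B$ together with the equivariance of $\varphi$: for $\mu^{\otimes \IN}$-a.e.\ path $(g_1, g_2, \dots)$ and $\nu_B$-a.e.\ $b$, the sequence $\varphi(g_1 \cdots g_n \cdot b) = (g_1 \cdots g_n)_* \varphi(b)$ is a bounded martingale in $\cM^1(M)$ whose pointwise cluster points, by the first step applied along the compositions $g_1 \cdots g_n$, are Dirac measures. Martingale convergence in the weak-$*$ topology then forces $\varphi(b)$ itself to be a Dirac measure concentrated at $x(b)$.

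The principal obstacle is this martingale convergence step, which requires the pointwise contraction of the first step to hold simultaneously along almost every random walk trajectory $(g_1 \cdots g_n)$, not merely along an arbitrary subsequence extracted from $G$. This almost-sure contracting property of random walks on minimal strongly proximal actions is exactly where hypothesis (2) becomes essential, since it allows extraction of pointwise limits along the $\mu^{\otimes \IN}$-typical trajectory. Ensuring that $x: B \to M$ is the ``correct'' $G$-equivariant measurable map---so that it is recovered as the martingale limit---requires identifying the stationary disintegration of $\nu_B$ over $\mu^{\otimes \IN}$-paths, and is the delicate technical point of the proof.
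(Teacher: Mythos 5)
Your first step is sound and matches the role hypothesis (2) plays in the paper's argument, but your second step contains two genuine gaps. First, for a fixed $b$ the sequence $(g_1\cdots g_n)_*\varphi(b)$ is \emph{not} a martingale: the martingale property of $n\mapsto (x_1\cdots x_n)_*\nu$ requires $\mu*\nu=\nu$, and $\varphi(b)$ is not $\mu$-stationary for fixed $b$ (only its average $\int_B\varphi(b)\,d\nu_B(b)$ is). Second, and more seriously, even granting that $(x_1\cdots x_n)_*\nu$ converges a.s.\ to Dirac masses, this does not ``force $\nu$ itself to be a Dirac measure'': the $\mu$-stationary measure on $S^1$ for a random walk on a lattice in ${\rm PSL}(2,\IR)$ is non-atomic, yet its random translates contract to points almost surely --- that contraction is precisely what $\mu$-proximality asserts, and it coexists with non-atomicity. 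The correct deduction, as in the paper, applies the contraction not to $\varphi(b)$ but to the $\mu$-stationary measure $\varphi_*(\nu_B)\in\cM^1\big(\cM^1(M)\big)$ (Margulis VI, Prop.~2.9): the barycenter of the limit of $(x_1\cdots x_n)_*\varphi_*(\nu_B)$ equals the Dirac mass $\phi_\nu(\omega)$, which forces that limit to be a point mass concentrated at a Dirac measure, and averaging over $\omega$ shows $\varphi_*(\nu_B)$ is carried by the set of Dirac masses.

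Moreover the heart of the proof --- establishing that $M$ is $\mu$-proximal, i.e.\ that the martingale limits $\phi_\nu(\omega)=\lim_n(x_1\cdots x_n)\nu$ are a.s.\ Dirac for every $\mu$-stationary $\nu$ --- is exactly what your first step does not deliver. Knowing that for each $\nu$ \emph{some} sequence $(g_n)$ contracts $\nu$ (hence some pointwise limit $\sigma$ collapses it) says nothing about the particular limits arising along $\mu^{\otimes\IN}$-a.e.\ trajectory $(x_1\cdots x_n)$. The paper bridges this with the Furstenberg--Guivarc'h--Raugi fact that $\lim_n(x_1\cdots x_n g)\nu=\phi_\nu(\omega)$ for a.e.\ $\omega$ and Haar-a.e.\ $g$: one perturbs the contracting sequence $g_n$ into this full-measure set of $g$'s, uses hypothesis (2) to extract a pointwise limit $s$ of $(x_1\cdots x_n)$, and evaluates at a continuity point of $s$ to identify $\phi_\nu(\omega)$ with a Dirac mass. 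You correctly flag this as ``the delicate technical point,'' but you do not supply it, and without it the argument does not close.
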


\begin{remark}\label{rem2.2} \rm
When $G$ is discrete countable and $M = S^1$, a result essentially equivalent to Theorem \ref{theo2.1} has been obtained by Deroin, Kleptsyn and Navas (see \cite{D-K-N07}).
\end{remark}

We recall here that a $G$-action is strongly proximal if for every $\mu \in \cM^1(M)$ the closure $\overline{G \mu} \subset \cM^1(M)$ contains a Dirac mass.

\begin{proof}
The main step consists in showing that the $G$-space $M$ is $\mu$-proximal in the sense of (2.6), Def.~3 in \cite{M91} VI: this means that for every $\mu$-stationary measure $\nu \in \cM^1(M)$ the map 
\begin{equation}\label{2.1}
\phi_\nu(\omega) : = \lim\limits_{n\rightarrow \infty} (x_1,\dots x_n) \,\nu, \;\;\omega = (x_n)_{n \ge 1}
\end{equation}

\n
with values in $\cM^1(M)$, defined almost everywhere on the infinite product $(S,\mu_S) : = \prod^\infty_{n = 1} \,(G,\mu)$, takes values in the set of Dirac measures; this we proceed to show now. The existence of the limit (\ref{2.1}) follows from the martingale convergence theorem (Prop. (2.4) in \cite{M91} VI); in addition one has the following remarkable fact (\cite{Fu63b}, \cite{G-R86} and Lemma 1.33 in \cite{F02}):
\begin{equation}\label{2.2}
\lim\limits_{n \rightarrow \infty} (x_1 \dots x_n\,g) \,\nu = \phi_\nu(\omega)
\end{equation}

\n
for $\mu_S$-almost every $\omega \in S$ and $\lambda$-almost every $g \in G$, where $\lambda$ is the left Haar measure on $G$. Fix now a sequence $\omega = (x_n)_{n \ge 1}$ and a subset $E \subset G$ of full $\lambda$-measure such that (\ref{2.2}) holds for $\omega$ and all $g \in E$. Passing to a subsequence we may assume that the limit 
\begin{equation*}
s(m): = \lim\limits_{n \rightarrow \infty} (x_1 \dots x_n) (m)
\end{equation*}

\n
exists for every $m \in M$. Then the map $s: M \rightarrow M$, although in general not continuous, admits points of continuity; in fact they form a $G_\delta$-subset of $M$. Let then $m \in M$ be a point of continuity of $s$. Since the $G$-action is minimal and strongly proximal, there exists a sequence $(g_n)_{n \ge 1}$ in $G$ such that $\lim_{n \rightarrow \infty} g_n \nu = \delta_m$. By the continuity of the $G$-action on $M$ there exists for each $n \ge 1$ an open neighborhood $U_n \ni g_n$ such that whenever $(h_n)_{n \ge 1} \in \prod_{n \ge 1} U_n$, $ \lim_{n \rightarrow \infty}  \,h_n \nu = \delta_m$. Since $\lambda (U_n) > 0$, $\forall n \ge 1$, we may choose $h_n \in U_n \cap E$ and obtain a sequence $(h_n)_{n \ge 1}$ such that
\begin{equation}\label{2.3}
\begin{array}{ll}
{\rm a)} & \lim\limits_{n \rightarrow \infty}\;h_n \,\nu = \delta_m
\\[2ex]
{\rm b)} & \lim\limits_{n \rightarrow \infty} \;(x_1 \dots x_n\,h_k) \, \nu = \phi_\nu(\omega), \;\mbox{for every $k \ge 1$} \,.
\end{array}
\end{equation}

\n
Let $V$ be a neighborhood of $s(m)$ and $W \ni m$ an open neighborhood such that $W \subset s^{-1}(V)$. Taking into account that $s : M \rightarrow M$ is a Borel map we have,
\begin{equation*}
(sh_k) \nu (V) = (h_k \,\nu) \big(s^{-1}(V)\big) \ge h_k \,\nu (W)
\end{equation*}

\n
which together with $\lim\limits_{k \rightarrow \infty} h_k \, \nu(W) = 1$ (see (\ref{2.3}) a)) implies that
\begin{equation}\label{2.4}
\lim\limits_{k \rightarrow \infty} \;(sh_k) (\nu) = \delta_{s(m)}\,.
\end{equation}
For every $k \ge 1$ and every continuous function $f \in C(M)$ we have
\begin{equation*}
\begin{split}
(sh_k) \nu (f) & = \; \dis\int_M f(sh_k \,x) \,d\nu(x)
\\
& = \;\dis\int_M \;\lim\limits_{n \rightarrow \infty} \;f(x_1 \dots x_n\,h_k \,x) \,d\nu(x)
\\[1ex]
& = \; \phi_\nu(\omega)(f)
\end{split}
\end{equation*}

\n
where in the second equality we have used the definition of $s$ and the continuity of $f$, while in the third we have used dominated convergence and (\ref{2.3}) b).

\medskip
Thus $sh_k \,\nu = \phi_\nu(\omega)$ for every $k \ge 1$ which together with (\ref{2.4}) implies that
\begin{equation*}
\phi_\nu(\omega) = \delta_{s(m)}\,,
\end{equation*}

\n
and hence since $\omega$ could be chosen from a subset of full measure, $\phi_\nu$ takes values in the set of Dirac masses almost everywhere; thus $M$ is $\mu$-proximal. Let now $(B, \nu_B)$ and $\varphi: B \rightarrow \cM^1(M)$ be as in Theorem \ref{theo2.1}. Then $\varphi_*(\nu_B) \in \cM^1\big(\cM^1(M)\big)$ is a $\mu$-stationary probability measure on $\cM^1(M)$ and since $M$ is $\mu$-proximal, Prop.~(2.9) in \cite{M91} VI implies that the support of $\varphi_*(\nu_B)$ is contained in the subset of $\cM^1(M)$ consisting of Dirac masses. This concludes the proof of Theorem \ref{theo2.1}
\end{proof}

In the following proposition we give examples of groups of homeomorphisms with the property in Theorem \ref{theo2.1} (2).

\begin{proposition}\label{prop2.2}
Let $(G,M)$ be one of the following pairs consisting of a compact metric space $M$ and a subgroup $G$ of the group of homeomorphisms of $M$:

\begin{itemize}
\item[{\rm 1)}] $M = \IP V_k$, $G = {\rm PGL}(V_k)$ where $V_k$ is a finite dimensional vector space over a local field $k$.

\item[{\rm 2)}] $M = S^1$, $G = \H (S^1)$.
\end{itemize}

\noindent
Then for any sequence $(g_n)_{n \ge 1}$ there is a subsequence such that $(g_{n_k})_{k \ge 1}$ converges pointwise.
\end{proposition}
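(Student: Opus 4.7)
The plan is to handle the two cases by separate arguments. For $M = S^1$ I apply Helly's selection theorem to lifts of the homeomorphisms to $\IR$; for $M = \IP V_k$ I use a Cartan-type decomposition in $\mathrm{PGL}(V_k)$ combined with a block decomposition of the diagonal part according to the asymptotic magnitudes of its eigenvalues.

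For case (2), I first lift each $g_n \in \H(S^1)$ to a continuous non-decreasing map $\wt{g}_n : \IR \to \IR$ satisfying $\wt{g}_n(x+1) = \wt{g}_n(x)+1$, normalized so that $\wt{g}_n(0) \in [0,1)$. The commutation with translation by $1$ makes the family $(\wt{g}_n)$ uniformly bounded on every compact interval, and each $\wt{g}_n$ is monotone, so Helly's selection theorem yields a subsequence $(\wt{g}_{n_k})$ converging pointwise on $\IR$ to a (possibly discontinuous) monotone function commuting with $x \mapsto x+1$. Projecting back to $S^1$ gives pointwise convergence of $(g_{n_k})$ on $S^1$.

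For case (1), I use the $KAK$-decomposition in $\mathrm{PGL}(V_k)$: write $g_n = k_n a_n \ell_n$ with $k_n, \ell_n$ in a fixed maximal compact subgroup $K$, and $a_n$ diagonal in a fixed basis $\{e_1,\dots,e_d\}$ with eigenvalues $a_n^{(1)},\dots,a_n^{(d)}$. Compactness of $K$ yields a subsequence along which $k_n \to k$ and $\ell_n \to \ell$, so by continuity of the $\mathrm{PGL}(V_k)$-action on $\IP V_k$ it suffices to establish pointwise convergence of a subsequence of $[a_n v]$ for every $v \neq 0$. Passing to a further subsequence, each ratio $|a_n^{(i)}|/|a_n^{(j)}|$ converges in $[0,\infty]$; partition $\{1,\dots,d\}$ into blocks $B_1,\dots,B_r$ such that within each block the ratios converge to finite nonzero limits, while between blocks (ordered by decreasing magnitude) they tend to $0$. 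Setting $V_s = \bigoplus_{i \in B_s} k \cdot e_i$ and fixing an index $i_s \in B_s$, for any $v \neq 0$ let $s(v)$ be the smallest $s$ with nonzero $V_s$-component $v_s$; then $(a_n^{(i_{s(v)})})^{-1} a_n v$ converges to a nonzero element of $V_{s(v)}$, so $[a_n v]$ converges in $\IP V_k$.

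The main technical point is part (1): the pointwise limit map is genuinely discontinuous, since vectors lying in lower blocks get collapsed into proper projective subspaces, so one cannot argue by uniform convergence on $\IP V_k$. The block decomposition and the appropriate choice of the relevant block $B_{s(v)}$ for each $v$ ensure that the limit exists for every single $v$, which is all that is required. The argument works uniformly across all local fields $k$, since only the absolute values of the eigenvalues enter into the block partition.
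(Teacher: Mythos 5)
Your treatment of case (2) is correct and is essentially the paper's argument: the paper reduces to a Helly-type selection lemma for monotone maps of $[0,1]$ (its Lemma 2.4, proved via the rationals and continuity points) after composing with rotations so as to fix a point, whereas you reduce to Helly's theorem by passing to the normalized lifts in ${\rm Homeo}^+_\IZ(\IR)$; these are the same idea.

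Case (1), however, has a genuine gap in the reduction step. From $g_n = k_n a_n \ell_n$ with $k_n \to k$, $\ell_n \to \ell$ you claim that ``by continuity of the action it suffices to establish pointwise convergence of $[a_n v]$ for every $v$.'' This is not a legitimate reduction: $g_n[v] = k_n\bigl[a_n(\ell_n v)\bigr]$, and while the convergence $k_n \to k$ can be absorbed (a convergent sequence in the compact group acts equicontinuously), the vector $\ell_n v$ is a \emph{moving} target, and the pointwise limit of $[a_n\,\cdot\,]$ is, as you yourself note, discontinuous. Pointwise convergence of $[a_n w]$ for each fixed $w$ says nothing about $[a_n w_n]$ for $w_n \to w$. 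Concretely, take $d=2$, $a_n = {\rm diag}(t_n,1)$ with $|t_n| \to \infty$, and $w_n = \epsilon_n e_1 + e_2 \to e_2$: then $[a_n w_n] = [\epsilon_n t_n e_1 + e_2]$ converges to $[e_1]$, to $[e_2]$, or to nothing at all, depending on the behaviour of $\epsilon_n t_n$, which is exactly the situation that arises when $(\ell v)$ has vanishing components in a top block but $(\ell_n v)$ does not. So the convergence of $[a_n \ell_n v]$ is not established by your block analysis of $a_n$ alone, and the proof of case (1) is incomplete as written.

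The paper avoids this by arguing directly with the sequence $g_n$ and a fixed vector $v$: it invokes induction on $\dim V_k$ via Furstenberg's lemma. The standard form of that argument is to choose matrix representatives $\wt{g}_n$ normalized to have norm $1$, extract a subsequence converging to a nonzero (possibly singular) $T$, observe that $[g_n v] \to [Tv]$ for all $v \notin \ker T$, and then recurse on the proper subspace $\ker T$ with the restricted and renormalized maps $g_n|_{\ker T}$. Because the vector $v$ is held fixed throughout, no moving-target issue arises. If you want to keep the Cartan decomposition, you would have to carry out your block analysis for the products $a_n \ell_n$ (or equivalently track the rates at which the coordinates $(\ell_n v)_i$ degenerate), at which point you are effectively redoing the normalized-limit argument.
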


\n
{\bf Question 2.3.}

\medskip\n
Let $X$ be a proper complete CAT$(0)$-space, $M = X(\infty)$ its visual boundary and $G = {\rm Isom}(X)$. Does $(G, M)$ satisfy the conclusion of Proposition \ref{prop2.2}?

\medskip
The first statement in Proposition \ref{prop2.2} follows easily by recurrence on $\dim V_k$ using Furstenberg's lemma (\cite{Fu63a}). Concerning the second, we may compose $g_n$ by an appropriate rotation $r_n$ such that $r_n \circ g_n$ fixes a point in $S^1$ and, passing to a subsequence we may assume that $(r_n)_{n \ge 1}$ converges uniformly; Proposition \ref{prop2.2} (2) will then follow from

\begin{lemma}\label{lem2.4}
Let $f_n: [0,1] \rightarrow [0,1]$, $n \ge 1$, be a sequence of monotone increasing maps. Then there exists a subsequence converging pointwise.
\end{lemma}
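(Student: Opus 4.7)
The plan is to prove Lemma \ref{lem2.4} by a two-stage diagonal extraction, essentially the classical Helly selection argument adapted to the setting where no continuity is assumed.

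First I would enumerate the rationals $D=\{q_1,q_2,\ldots\}$ in $[0,1]$. Since each $f_n(q_j)$ lies in the compact set $[0,1]$, a standard Cantor diagonal extraction produces a subsequence, which I still denote $(f_{n_k})$, such that
\begin{equation*}
g(q):=\lim_{k\to\infty}f_{n_k}(q)
\end{equation*}
exists for every $q\in D$. Monotonicity is preserved in the limit, so $g:D\to[0,1]$ is monotone increasing.

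Next I would extend $g$ to all of $[0,1]$ by the two natural envelopes
\begin{equation*}
G^-(x):=\sup\{g(q):q\in D,\,q<x\},\qquad G^+(x):=\inf\{g(q):q\in D,\,q>x\},
\end{equation*}
with the conventions $G^-(0)=0$, $G^+(1)=1$. Both $G^\pm$ are monotone increasing on $[0,1]$ and $G^-\le G^+$. For any $x\in[0,1]$ and any $q,q'\in D$ with $q<x<q'$, monotonicity of the $f_{n_k}$ gives
\begin{equation*}
g(q)\le\liminf_{k\to\infty}f_{n_k}(x)\le\limsup_{k\to\infty}f_{n_k}(x)\le g(q'),
\end{equation*}
so, taking $\sup$ over $q$ and $\inf$ over $q'$, we get $G^-(x)\le\liminf f_{n_k}(x)\le\limsup f_{n_k}(x)\le G^+(x)$. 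In particular, $f_{n_k}(x)\to G^-(x)$ at every point $x$ where $G^-(x)=G^+(x)$.

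The step I expect to require the most care is handling the exceptional set $E:=\{x\in[0,1]:G^-(x)<G^+(x)\}$. The intervals $\bigl(G^-(x),G^+(x)\bigr)$ for $x\in E$ are pairwise disjoint inside $[0,1]$, hence $E$ is countable. I then apply a second diagonal extraction to the countable set $E$: this yields a sub-subsequence $(f_{n_{k_\ell}})$ that converges at every $x\in E$. Since the convergence estimate in the previous paragraph remains valid for any subsequence of $(f_{n_k})$ that still converges on $D$, the sub-subsequence also converges at every $x\notin E$ to $G^-(x)=G^+(x)$. Thus $(f_{n_{k_\ell}})$ converges pointwise on all of $[0,1]$, completing the proof.
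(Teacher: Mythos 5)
Your argument is correct and is essentially the paper's own proof: both are the classical Helly selection scheme, extracting a subsequence converging on the rationals, isolating a countable exceptional set (your set $E$ where the envelopes $G^-$ and $G^+$ disagree is, up to the bookkeeping, the discontinuity set of the paper's monotone extension $F$), performing a second diagonal extraction there, and squeezing $f_{n_k}(x)$ between rational values at the remaining points. No gap.
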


\begin{proof}
We may assume that $f(r) : = \lim_{n \rightarrow \infty} f_n(r)$ exists $\forall r \in \IQ \cap [0,1]$. Then $f$ is monotone increasing on $\IQ \cap [0,1]$ and thus 
\begin{equation*}
F(x) : = \sup\{f (r): r \le x, \;r \in \IQ \cap [0,1]\}
\end{equation*}

\n
is a well defined function on $[0,1]$ which is monotone increasing and extends $f$. The set $\cD \subset [0,1]$ of discontinuities of $F$ is at most countable; passing to a subsequence we may assume then that $\lim_{n \rightarrow \infty} f_n(d)$ exists for every $d \in \cD$. 

\medskip
Let now $t \notin \cD$ be a point of continuity of $F$. For every $t^\prime, t^{\prime\prime} \in \IQ$ with $t^\prime \le t \le t^{\prime\prime}$ we have $F(t^\prime) \le \xi \le F(t^{\prime\prime})$ where $\xi$ is any accumulation point of the sequence $\big(f_n(t)\big)_{n \ge 1}$. Letting $t^\prime, t^{\prime\prime}$ approach $t$ and using the continuity of $F$ we get $\xi = F(t)$ and hence $\lim_{n \rightarrow \infty} f_n(t) = F(t)$. 
\end{proof}

\section{Elementary properties of actions on the circle}
\setcounter{equation}{0}

In this section we establish the basic dichotomy concerning actions on the circle in terms of their bounded real Euler class, announced in the introduction, and show the uniqueness of the strongly proximal quotient. Along the way we'll take the opportunity to recall some well known facts which will be used later on.

\medskip
Let $S^1 = \IZ \backslash \IR$ and consider the central extension
\begin{equation*}
0 \rightarrow \IZ \stackrel{i}{\rightarrow} {\rm Homeo}^+_\IZ (\IR) \stackrel{p}{\rightarrow} \H(S^1) \rightarrow (e)
\end{equation*}

\n
where ${\rm Homeo}^+_\IZ (\IR)$ is the group of increasing homeomorphisms of $\IR$ commuting with integer translations and $i( n) = T^n$, where $T(x)  = x+1, x \in \IR$. A section of $p$ is obtained by associating to every $f \in \H(S^1)$ the unique lift $\overline{f}: \IR \r \IR$ satisfying $\overline{f}(0) \in [0,1)$. Then we have
\begin{equation}\label{3.1}
\overline{f g} \;T^{c(f,g)} = \overline{f} \,\overline{g}, \;f,g \in \H(S^1)
\end{equation}

\n
and $c(f,g) \in\{0,1\}$ is an inhomogeneous $2$-cocycle called the Euler cocycle whose class $e \in H^2 (\H(S^1), \IZ)$ is called the Euler class. Considering $c$ as a $\IZ$-valued resp. $\IR$-valued bounded cocycle leads respectively to the bounded Euler class $e^b \in H^2_b (\H(S^1), \IZ)$ and the bounded real Euler class $e^b_\IR \in H^2_b (\H(S^1), \IR)$. Given a group $\Gamma$ and a homomorphism $\rho : \Gamma \r \H(S^1)$ we obtain accordingly three invariants associated to $\rho$ namely, the Euler class $\rho^*(e) \in H^2(\Gamma,\IZ)$, the bounded Euler class $\rho^*(e^b) \in H_b^2(\Gamma, \IZ)$ and the real bounded Euler class $\rho^*(e^b_\IR) \in H_b^2(\Gamma,\IR)$. The information contained in $\rho^*(e)$ is of purely algebraic nature; $\rho^*(e) = 0$ if and only if $\rho$ lifts to a homomorphism with values in ${\rm Homeo}^+_{\IZ}(\IR)$, which for example is the case when $\Gamma$ is a free group. Concerning the bounded Euler class we have on the other hand

\begin{theorem}\label{theo3.1} {\rm (\cite{Gh87})}

\medskip
Two actions $\rho_0, \rho_1$ are quasi-conjugated if and only if $\rho^*_0 (e^b) = \rho_1^*(e^b)$.
\end{theorem}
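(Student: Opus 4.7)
The proof splits into two directions, of which only ($\Leftarrow$) has content. For ($\Rightarrow$), a weakly cyclic-order-preserving $\psi:S^1\to S^1$ intertwining $\rho_0$ and $\rho_1$ lifts to a weakly monotone, $T$-equivariant $\widetilde\psi:\IR\to\IR$. Comparing $\widetilde\psi$ with the canonical section $\sigma(f):=\bar f$ of (3.1) produces a bounded $1$-cochain $\beta:\Gamma\to\IZ$ with $d\beta=\rho_0^*(c)-\rho_1^*(c)$, so $\rho_0^*(e^b)=\rho_1^*(e^b)$.

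For ($\Leftarrow$), the plan is to convert the bounded cohomology equality into an orbit-matching problem in $\IR$. Set $\sigma_i:=\sigma\circ\rho_i$ and $c_i:=c\circ(\rho_i\times\rho_i)$. The hypothesis gives a bounded $\alpha:\Gamma\to\IZ$ with $c_0-c_1=d\alpha$. Twist the second section by defining $\widetilde\sigma_1(\gamma):=\sigma_1(\gamma)\,T^{\alpha(\gamma)}$; a short computation shows $\widetilde\sigma_1$ has cocycle exactly $c_0$, so both $\sigma_0$ and $\widetilde\sigma_1$ extend to genuine group homomorphisms $\widetilde\rho_0,\widetilde\rho_1:\widetilde\Gamma\to{\rm Homeo}^+_\IZ(\IR)$ from the central $\IZ$-extension $\widetilde\Gamma\to\Gamma$ determined by $c_0$, both restricting to $n\mapsto T^n$ on the centre and satisfying
\[
\sup_{\widetilde\gamma\in\widetilde\Gamma}\bigl|\widetilde\rho_0(\widetilde\gamma)(0)-\widetilde\rho_1(\widetilde\gamma)(0)\bigr|\le 1+\|\alpha\|_\infty.
\]

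Next I would construct the quasi-conjugation via orbit matching. On the countable $\IZ$-invariant set $\Omega:=\widetilde\rho_0(\widetilde\Gamma)(0)\subset\IR$ define
\[
\varphi\bigl(\widetilde\rho_0(\widetilde\gamma)(0)\bigr):=\widetilde\rho_1(\widetilde\gamma)(0),
\]
and extend to $\IR$ by $\varphi(x):=\sup\{\varphi(y):y\in\Omega,\,y\le x\}$. Granted well-definedness and weak monotonicity on $\Omega$, the extended $\varphi$ is weakly monotone, commutes with $T$, and satisfies $\varphi\circ\widetilde\rho_0(\widetilde\gamma)=\widetilde\rho_1(\widetilde\gamma)\circ\varphi$. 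Passing to $S^1=\IR/\IZ$ and killing the central $\IZ$ yields the desired $\psi:S^1\to S^1$ quasi-conjugating $\rho_0$ to $\rho_1$.

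The technical heart is the well-definedness and weak monotonicity check on $\Omega$: setting $\widetilde\delta:=\widetilde\gamma_2^{-1}\widetilde\gamma_1$, this reduces to the implication $\widetilde\rho_0(\widetilde\delta)(0)\le 0\Rightarrow\widetilde\rho_1(\widetilde\delta)(0)\le 0$, with equality forcing equality. Applying the uniform bound to the powers $\widetilde\delta^n$ forces $\tau(\widetilde\rho_0(\widetilde\delta))=\tau(\widetilde\rho_1(\widetilde\delta))$, and together with the commutation with $T$ this pins down the sign of $\widetilde\rho_i(\widetilde\delta)(0)$ independently of $i$. The most delicate case is $\widetilde\rho_0(\widetilde\delta)(0)=0$, where one must show $0$ is a genuine common fixed point and not merely a translation-periodic point; here the bounded discrepancy along the cyclic subgroup $\langle\widetilde\delta\rangle$, combined with $T$-equivariance of both maps (which quantizes the orbit of $0$ once one side fixes it), is exactly what is needed. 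Packaging this into a clean inductive comparison along $\langle\widetilde\delta\rangle$ is the step I expect to be the main obstacle.
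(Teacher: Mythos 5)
Your forward direction and the cohomological setup of the converse are correct and standard: writing $c_0-c_1=d\alpha$ with $\alpha:\Gamma\to\IZ$ bounded, twisting the section to $\wt\sigma_1(\gamma)=\sigma_1(\gamma)T^{\alpha(\gamma)}$, obtaining two homomorphisms $\wt\rho_0,\wt\rho_1:\wt\Gamma\to{\rm Homeo}^+_\IZ(\IR)$ of the central extension determined by $c_0$ that agree on the centre, and deriving the uniform bound $\sup_{\wt\gamma}|\wt\rho_0(\wt\gamma)(0)-\wt\rho_1(\wt\gamma)(0)|\le 1+\|\alpha\|_\infty$. The gap is the orbit-matching step. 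The map $\varphi(\wt\rho_0(\wt\gamma)(0)):=\wt\rho_1(\wt\gamma)(0)$ is in general \emph{not} well defined, and the implication you identify as the technical heart, namely $\wt\rho_0(\wt\delta)(0)=0\Rightarrow\wt\rho_1(\wt\delta)(0)=0$, is false. Take $\Gamma=\IZ$, $\rho_0$ trivial, and $\rho_1(1)=g$ where $\ov g$ fixes $\tfrac12+\IZ$, satisfies $\ov g(x)>x$ on $(-\tfrac12,\tfrac12)$ and $\ov g(0)=\tfrac14$. Both bounded Euler classes vanish, $\wt\Gamma=\IZ\times\IZ$, $\wt\rho_0(m,n)=T^n$ and $\wt\rho_1(m,n)=\ov g^{\,m}T^n$. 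For $\wt\delta=(1,0)$ the discrepancy $|\wt\rho_0(\wt\delta^{\,n})(0)-\wt\rho_1(\wt\delta^{\,n})(0)|=|\ov g^{\,n}(0)|<\tfrac12$ is bounded and both translation numbers are $0$, yet $\wt\rho_0(\wt\delta)$ fixes $0$ while $\wt\rho_1(\wt\delta)(0)=\tfrac14$; indeed $\varphi$ would have to take the infinitely many distinct values $\ov g^{\,m}(0)$ at the single orbit point $0\in\Omega$. No inductive comparison along $\langle\wt\delta\rangle$ can rescue this, because the conclusion itself is false. The theorem is still true in this example: $\rho_0$ and $\rho_1$ are quasi-conjugate via the constant map onto the image of $\tfrac12$ in $S^1$, which shows the intertwiner must be allowed to collapse orbits --- something pointwise orbit matching can never produce.

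The repair is to abandon the orbit-level definition and set directly
\[
\varphi(x):=\sup\bigl\{\wt\rho_1(\wt\gamma)(0)\,:\,\wt\gamma\in\wt\Gamma,\ \wt\rho_0(\wt\gamma)(0)\le x\bigr\},
\]
which is finite by your uniform bound, monotone non-decreasing by construction, commutes with $T$ because the central generator acts by $T$ under both homomorphisms, and satisfies $\varphi\circ\wt\rho_0(\wt\eta)=\wt\rho_1(\wt\eta)\circ\varphi$ because an increasing homeomorphism commutes with suprema. Descending to $S^1=\IZ\backslash\IR$ gives the quasi-conjugation with no well-definedness or sign analysis needed. (For calibration: the paper does not prove Theorem 3.1 at all, but quotes it from Ghys, whose argument is essentially the one just sketched.)
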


There are several equivalent ways to look at quasi-conjugations. Ghys' original definition is as follows: a quasi-conjugation is a map $h: S^1 \r S^1$ induced by a monotone increasing map $\overline{h}: \IR \r \IR $ commuting with integer translations; observe that a quasi-conjugation is not necessarily continuous. Two actions $\rho_0,\rho_1$ are then quasi-conjugated if there is such a map $h$ with 
\begin{equation}\label{3.2}
h \rho_0(\gamma) = \rho_1(\gamma) \,h, \;\;\forall \gamma \in \Gamma\,.
\end{equation}
Quasi-conjugation is an equivalence relation.

\medskip
Observe that if we set $(\ov{h})_+ (x) = \lim_{y > x} \,\ov{h}(y)$, $(\ov{h})_-\, (x) = \lim_{y < x} \,\ov{h}(y)$, then $(\ov{h})_+$ and $(\ov{h})_-$ induce respectively a right continuous $h_+$ and a left continuous $h_-$ quasi-conjugacy and if (\ref{3.2}) holds then we have $h_{\pm} \,\rho_0(\gamma) = \rho_1(\gamma) \,h_\pm$, $\forall \gamma \in \Gamma$ as well.

\medskip
An intrinsic way to define quasi-conjugation rests on the consideration of the orientation cocycle $o: (S^1)^3 \r \{ -1,0,1\}$ defined as follows:
\begin{equation}\label{3.3}
o(x,y,z) = \left\{ \begin{array}{rl}
1 & \mbox{if $x,y,z$ are positively oriented,}
\\[1ex]
-1 & \mbox{if $x,y,z$ are negatively oriented,}
\\[1ex]
0 & \mbox{if $x,y,z$ are not pairwise distinct.}
\end{array}\right.
\end{equation}

\n
Then a map $h : S^1 \r S^1$ is a quasi-conjugacy if and only if
\begin{equation*}
o\big(h(x), h(y), h(z)\big) \ge 0 \;\;\mbox{whenever} \;\;o(x,y,z) \ge 0\,.
\end{equation*}

\n
A third characterization is obtained as follows. Define for $x,y \in S^1$ the open interval $(x,y) : = \{z \in S^1: o (x,z,y) = 1\}$ and $[x,y]$, $(x,y]$, $[x,y)$ in the obvious way. Given a point $b \in S^1$ and a probability measure $\mu \in \cM^1(S^1)$, the map
\begin{equation}\label{3.4}
h_{b,\mu}\big(x)  : = \mu([b,x)\big) \;{\rm mod} \, \IZ 
\end{equation}

\n
is a left continuous quasi-conjugacy and every such quasi-conjugacy is obtained this way for a unique pair $(b,\mu)$. The behaviour under composition with homeomorphisms is particularly simple,
\begin{equation}\label{3.5}
h_{b,\mu}\big(\varphi^{-1}(x)\big) = h_{\varphi(b), \varphi_*(\mu)}(x), \;\;\varphi \in \H(S^1)\,.
\end{equation}

\n
A very useful tool in the study of group actions on $S^1$ is the following trichotomy for $\rho: \Gamma \r \H(S^1)$, see (\cite{He-Hi83}, Chap. IV-3)
\begin{equation}\label{3.6}
\begin{array}{ll}
{\rm 1)} & \mbox{There is a finite orbit and all finite orbits have the same cardinality.}
\\[1ex]
{\rm 2)} & \mbox{The action is minimal.}
\\[1ex]
{\rm 3)} & \mbox{There is a unique proper minimal invariant closed subset $F \underset{+}{\subset} S^1$,}
\\[-1ex]
&\mbox{which is a Cantor set.}
\end{array}
\end{equation}

\n
In the first case $\rho$ is quasi-conjugated to an action $\rho_0$ by rotations; a quasi-conjugacy is given by $h_{b,\mu}$ where $\mu$ is the uniform measure along a finite orbit. Then $\rho_0(\Gamma)$ is finite cyclic.

\medskip
In the second case $\rho(\Gamma)$ is either bounded (equicontinuous), preserves hence a probability measure $\mu$ necessarily with full support and no atoms, and can be conjugated into the subgroup of rotations by the homeomorphism $h_{b,\mu}$; or $\rho(\Gamma)$ is unbounded. 

\medskip
In the third case one can collapse each connected component of $S^1 \backslash F$ to a point and obtain this way a continuous quasi-conjugacy between $\rho$ and a minimal action. In fact such a quasi-conjugacy is given by $h_{b,\mu}$ where $\mu$ is a probability measure without atoms and with support precisely the Cantor set $F$.

\medskip
With this at hand, and denoting as usual
\begin{equation*}
{\rm rot}: \;\;\H(S^1) \r \IZ \backslash \IR
\end{equation*}
the rotation number function we have:
\begin{proposition}\label{prop3.2}
Let $\rho : \Gamma \r \H(S^1)$ be a homomorphism. 

\begin{itemize}
\item[{\rm 1)}] $\rho^*(e^b_\IR) = 0$ if and only if $\rho$ is quasi-conjugated to a subgroup of the group of rotations. When this is the case,
\begin{align*}
\Gamma & \r \;\IZ \backslash \IR
\\
\gamma &\longmapsto \; {\rm rot}\, \rho(\gamma)
\end{align*}
is a homomorphism and $\rho$ is quasi-conjugated to the action by rotations defined by it.

\item[{\rm 2)}]  $\rho^*(e^b_\IR) \not= 0$ if and only if $\rho$ is quasi-conjugated to a minimal unbounded action.
\end{itemize}
\end{proposition}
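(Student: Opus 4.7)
I would build both equivalences out of Ghys' theorem (Theorem \ref{theo3.1}), the long exact sequence in bounded cohomology coming from the coefficient sequence $0 \to \IZ \to \IR \to \IR/\IZ \to 0$, and the trichotomy (\ref{3.6}). The starting point is the exact sequence
\[
0 \to \Hom(\Gamma, \IR/\IZ) \xrightarrow{\delta} H^2_b(\Gamma, \IZ) \to H^2_b(\Gamma, \IR),
\]
available because $H^1_b(\Gamma, \IR) = 0$ (bounded homomorphisms into $\IR$ vanish) and $H^1_b(\Gamma, \IR/\IZ) = \Hom(\Gamma, \IR/\IZ)$. For $\chi \in \Hom(\Gamma, \IR/\IZ)$ with canonical lift $\tilde\chi:\Gamma \to [0,1)$, a direct computation with the section of $p$ chosen in (\ref{3.1}) identifies $\delta(\chi)$ with the bounded integer cocycle $(\gamma_1,\gamma_2)\mapsto\lfloor\tilde\chi(\gamma_1)+\tilde\chi(\gamma_2)\rfloor$, which is exactly the pullback of $c$ by the rotation action $\rho_\chi(\gamma) := R_{\chi(\gamma)}$.

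These ingredients immediately settle part 1). The direction $(\Leftarrow)$ is a tautology: if $\rho$ quasi-conjugates into Rot, then $\rho^*(e^b) = \delta(\chi)$ for the associated $\chi$, and this class maps to zero in $H^2_b(\Gamma, \IR)$. For $(\Rightarrow)$, exactness turns $\rho^*(e^b_\IR) = 0$ into $\rho^*(e^b) = \rho_\chi^*(e^b)$ for some $\chi$, and Theorem \ref{theo3.1} then produces a quasi-conjugacy $\rho \sim \rho_\chi$. Because the rotation number is manifestly a quasi-conjugation invariant, the homomorphism displayed in the statement must coincide with $\chi$.

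Part 2) then follows by combining part 1) with the trichotomy (\ref{3.6}). In Case 1 (finite orbit) and Case 2 with $\rho(\Gamma)$ equicontinuous, the quasi-conjugacies $h_{b,\mu}$ described after (\ref{3.6}) put $\rho$ inside Rot; in Case 3, collapsing the exceptional Cantor set provides a continuous quasi-conjugacy with a minimal action, and if the latter is equicontinuous the same applies. Part 1) thus gives $\rho^*(e^b_\IR) = 0$ in all these cases, so the hypothesis $\rho^*(e^b_\IR)\neq 0$ leaves only Case 2 unbounded, or Case 3 whose minimal collapse is unbounded, both yielding quasi-conjugacy to a minimal unbounded action. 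Conversely, if $\rho\sim\rho'$ with $\rho'$ minimal unbounded and, for contradiction, $\rho^*(e^b_\IR)=0$, part 1) produces a quasi-conjugacy $h_{b,\mu}\,\rho'(\gamma) = \rho_\chi(\gamma)\,h_{b,\mu}$; applying (\ref{3.5}) and taking differences on cyclic intervals forces $\mu([\rho'(\gamma)y,\rho'(\gamma)x)) = \mu([y,x))$, so $\mu$ is $\rho'$-invariant. Minimality then makes $\mu$ full-support and atom-free, so $h_{b,\mu}$ is a genuine homeomorphism conjugating $\rho'$ into Rot, contradicting unboundedness.

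The only nontrivial step is this final passage from an \emph{abstract} quasi-conjugacy to a rotation action to a concrete $\rho'$-invariant probability measure on $S^1$; everything else is cohomological bookkeeping with the coefficient sequence or a direct invocation of (\ref{3.6}). That translation is made possible by the explicit description (\ref{3.4})--(\ref{3.5}) of quasi-conjugations via basepoints and measures.
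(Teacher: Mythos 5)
Your proposal is correct, and its core coincides with the paper's argument. For 1) the paper works directly at the cochain level: from a bounded real primitive $\beta$ of $\rho^*c$ it extracts the homomorphism $f=\beta\bmod\IZ$, forms the rotation action $\rho_0$ with angle $f$, and verifies $\rho^*c-\rho_0^*c=d[\beta]$ before invoking Theorem \ref{theo3.1}; your computation of the connecting map, $\delta(\chi)=\rho_\chi^*(e^b)$, together with exactness of $\Hom(\Gamma,\IR/\IZ)\to H^2_b(\Gamma,\IZ)\to H^2_b(\Gamma,\IR)$, is the same calculation packaged through the coefficient sequence, and your converse matches the paper's remark that $e^b_\IR$ vanishes on ${\rm Rot}$. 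The forward half of 2) is argued identically via the trichotomy (\ref{3.6}). The one place you genuinely go beyond the printed proof is the converse of 2): the paper only writes out $\rho^*(e^b_\IR)\neq 0$ implies quasi-conjugate to a minimal unbounded action, tacitly relying on the mutual exclusivity of the two alternatives in 1) and 2); your derivation of a $\rho'$-invariant measure from the normal form (\ref{3.4})--(\ref{3.5}) of a left-continuous quasi-conjugacy, followed by the observation that minimality forces $\mu$ to be atom-free with full support so that $h_{b,\mu}$ is an honest conjugating homeomorphism (contradicting unboundedness), supplies exactly that missing exclusivity and is correct. If you write it up, make explicit that the quasi-conjugacy provided by Theorem \ref{theo3.1} can be chosen left continuous and in the direction $h\,\rho'(\gamma)=\rho_\chi(\gamma)\,h$, since the uniqueness of the pair $(b,\mu)$ in (\ref{3.4}) is what converts the intertwining relation into $\rho'(\gamma)_*\mu=\mu$.
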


\begin{proof} 1) Assume $\rho^*(e^b_\IR) = 0$. Then there exists $\beta: \Gamma \r \IR$ bounded with
\begin{equation*}
d\beta (\gamma, \eta): = \beta (\gamma) - \beta(\gamma \eta) + \beta(\eta) = c\big(\rho(\gamma), s(\eta)\big), \quad \forall \gamma, \eta \in \Gamma\,.
\end{equation*}

\n
Since $c(~,~)$ takes values in $\IZ$, the map $f: \Gamma \r \IZ \backslash \IR$, $\gamma \longmapsto f(\gamma) = \beta(\gamma) \;{\rm mod} \,\IZ$, is a homomorphism. Let $\rho_0(\gamma) (x) : = x + f(\gamma)$, $x \in \IZ \backslash \IR$ be the corresponding action by rotations. Denoting by $\{~\}$ the fractional part of a real number, we have then $\overline{\rho_0(\gamma)} \,(x) = x + \{\beta(x)\}$, $\forall \gamma \in \Gamma$ and hence $c\big(\rho_0(\gamma), \rho_0(\eta)\big) = d(\{\beta\}) (\gamma, \eta)$, $\forall \gamma, \eta \in \Gamma$. This implies for all $\gamma, \eta \in \Gamma$:
\begin{equation*}
c(\rho_0\big(\gamma), \rho_0(\eta)\big) + d ([\beta]) (\gamma, \eta) = c\big(\rho(\gamma), \rho(\eta)\big)
\end{equation*}
where $[\beta]: \Gamma \r \IZ$ is the bounded function associating to $\gamma$ the integral part $[\beta(\gamma)]$ of $\beta(\gamma)$. As a result $\rho_0^*(e^b) = \rho^*(e^b)$ and by Ghys' theorem (Thm 3.1) $\rho$ and $\rho_0$ are quasi-conjugated. Since the rotation number is an invariant of quasi-conjugation we have $f(\gamma) = {\rm rot} \, \rho(\gamma)$, $\forall \gamma \in \Gamma$ and hence $\gamma \longmapsto {\rm rot} \, \rho(\gamma)$ is a homomorphism.

\medskip
Conversely, if $\rho$ is quasi-conjugated to a homomorphism $\rho_0: \Gamma \r {\rm Rot}$ then $\rho^*(e^b) = \rho^*_0(e^b)$ and hence $\rho^*(e^b_\IR) = \rho^*_0(e^b_\IR)$. But since Rot is compact, $e^b_\IR |_{\rm Rot} = 0$ which implies $0 = \rho_0^*(e^b_\IR) = \rho^*(e^b_\IR)$. 

\medskip\n
2) If $\rho^*(e^b_\IR) \not= 0$ then it follows from 1) that $\rho$ is not quasi-conjugated into Rot and hence by the trichotomy (\ref{3.6}) and the discussion following it, $\rho$ is quasi-conjugated to a minimal unbounded action.
\end{proof}

Since bounded cohomology with $\IR$-coefficients vanishes for amenable groups we conclude that,
\begin{corollary}\label{cor3.3}
If $\Gamma$ is amenable then for any homomorphism $\rho : \Gamma \r \H(S^1)$, the map
\begin{align*}
\Gamma & \r \;\IZ \backslash \IR
\\
\gamma &\longmapsto \; {\rm rot}\, \rho(\gamma)
\end{align*}
is a homomorphism.
\end{corollary}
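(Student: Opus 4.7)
The plan is very short: this is a one-line consequence of Proposition \ref{prop3.2} combined with the vanishing of real bounded cohomology in degree two for amenable groups.

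First I would invoke the standard fact that for an amenable group $\Gamma$ one has $H^2_b(\Gamma,\IR) = 0$. This is the classical vanishing theorem for bounded cohomology with real (more generally, dual separable) coefficients, and the paper has already implicitly pointed to it as part of the general machinery. In particular, for any homomorphism $\rho: \Gamma \r \H(S^1)$ the real bounded Euler class $\rho^*(e^b_\IR) \in H^2_b(\Gamma,\IR)$ must vanish, since it lives in a trivial group.

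Next I would apply Proposition \ref{prop3.2}, part 1), directly: since $\rho^*(e^b_\IR) = 0$, that proposition tells us that the map $\gamma \longmapsto {\rm rot}\,\rho(\gamma)$ from $\Gamma$ to $\IZ \backslash \IR$ is a homomorphism (and moreover $\rho$ is quasi-conjugated to the rotation action defined by it, though that stronger conclusion is not needed here).

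There is really no obstacle: the content of Corollary \ref{cor3.3} is entirely absorbed into Proposition \ref{prop3.2}, and the only extra input is the vanishing of $H^2_b(\Gamma,\IR)$ for amenable $\Gamma$. The one thing to be careful about is making clear that we use real (not integral) coefficients: over $\IZ$ the bounded Euler class need not vanish even for amenable groups, but it is the vanishing over $\IR$ that Proposition \ref{prop3.2} 1) requires, so the hypothesis of amenability is applied to the correct coefficient module.
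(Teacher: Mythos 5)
Your proposal is correct and is exactly the paper's argument: the corollary is stated as an immediate consequence of the vanishing of $H^2_b(\Gamma,\IR)$ for amenable $\Gamma$ combined with Proposition \ref{prop3.2} 1), and your remark about using real rather than integral coefficients is the right point of care. (The paper additionally notes an alternative, more elementary proof via the Poincar\'e rotation number with respect to an invariant measure, but that is an aside, not the main route.)
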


A simpler proof of the last corollary consists in observing that the rotation number with respect to a $\Gamma$-invariant measure coincides with Poincar\'e's rotation number and is clearly a group homomorphism.

Now we turn to a closer study of minimal unbounded actions. In this context the following definition will be useful:

\begin{definition}\label{def3.4}
A non-trivial $S^1$-factor of an action $\rho: \Gamma \r \H(S^1)$ is a pair $(\varphi, \rho_0)$ consisting of a non-constant continuous map $\varphi: S^1 \r S^1$ and an action $\rho_0: \Gamma \r \H(S^1)$ with $\varphi \rho(\gamma) = \rho_0 (\gamma) \varphi$, $\forall \gamma \in \Gamma$.
\end{definition}

Denoting by $\cZ_{\cH^+}(L)$ the centralizer in $\H(S^1)$ of a subgroup $L$ we have:

\begin{theorem}\label{theo3.5} {\rm (\cite{M00}, \cite{Gh01})}

\medskip
Let $\rho : \Gamma \r \H(S^1)$ be minimal unbounded. Then the centralizer $C_{\rho}: =\cZ_{\cH^+}\big(\rho(\Gamma)\big)$ of $\rho(\Gamma)$ in $\H(S^1)$ is finite cyclic and the $S^1$-factor $(\varphi,\rho_0)$, where $\varphi: S^1 \r C_\rho \backslash S^1$ is the quotient map and $\rho_0$ the quotient action, is strongly proximal.
\end{theorem}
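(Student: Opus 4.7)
The plan is to split the theorem into two claims: that $C_\rho$ is finite cyclic, and that the induced quotient action $\rho_0$ on $C_\rho\backslash S^1$ is strongly proximal.

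For the first claim, I would first show that $C_\rho$ acts freely on $S^1$: if some $c\in C_\rho$ had a fixed point, then $\mathrm{Fix}(c)$ would be closed, non-empty and $\rho(\Gamma)$-invariant (each $\rho(\gamma)$ preserves $\mathrm{Fix}(c)$ because it commutes with $c$), so minimality of $\rho$ forces $\mathrm{Fix}(c)=S^1$ and $c=\mathrm{id}$. The Hölder theorem for freely acting subgroups of $\H(S^1)$ then makes $C_\rho$ abelian, hence amenable, so it carries a $C_\rho$-invariant probability measure $\mu$ on $S^1$, and rotation number provides an injective homomorphism $\mathrm{rot}\colon C_\rho\hookrightarrow\IR/\IZ$. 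Suppose for contradiction that $C_\rho$ is infinite; then $\mathrm{rot}(C_\rho)$ is a non-discrete, hence dense, subgroup of $\IR/\IZ$, and a careful analysis of the semi-conjugacy $h_{b,\mu}$ from (3.4) shows that $\mu$ is the unique $C_\rho$-invariant probability on $S^1$ (density of $\mathrm{rot}(C_\rho)$ forces Lebesgue on the target circle, and the countable system of gaps of $h_{b,\mu}$ carries no $C_\rho$-invariant probability). Since $\rho(\Gamma)$ centralizes $C_\rho$, each $\rho(\gamma)_*\mu$ is $C_\rho$-invariant and equals $\mu$, so via the transformation rule (3.5), $h_{b,\mu}$ quasi-conjugates $\rho$ to an action by rotations, giving $\rho^*(e^b_\IR)=0$ and contradicting Proposition \ref{prop3.2} for the unbounded $\rho$. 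Hence $C_\rho$ is finite, and as it embeds into $\IR/\IZ$, cyclic.

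For the second claim, I note that $C_\rho$, being finite cyclic of order $k$ and acting freely on $S^1$, is topologically conjugate to the cyclic subgroup of $\mathrm{Rot}$ of order $k$, so $\varphi$ is identified with the $k$-fold covering $x\mapsto kx\bmod 1$ and $\rho_0$ is minimal. To prove strong proximality I would establish that the circle $M=\{\tfrac{1}{k}\sum_{c\in C_\rho}\delta_{cx}:x\in S^1\}\subset\cM^1(S^1)$ is the unique minimal $\rho(\Gamma)$-invariant closed subset of $\cM^1(S^1)$. Granted this, for every $\mu\in\cM^1(C_\rho\backslash S^1)$ one picks a lift $\nu\in\cM^1(S^1)$ with $\varphi_*\nu=\mu$; then $\overline{\rho(\Gamma)\nu}$ contains $M$, so $\overline{\rho_0(\Gamma)\mu}$ contains $\varphi_*(M)$, which is precisely the set of Dirac masses on $C_\rho\backslash S^1$. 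For the uniqueness of $M$ I would argue by contradiction: given $\mu'\in M'\setminus M$ in some other putative minimal invariant set $M'$, using the left-continuous quasi-conjugacy $h_{b,\mu'}$ and the transformation rule (3.5), one extracts $\gamma_n\in\Gamma$ with $\rho(\gamma_n)_*\mu'\to\mu'$ and passes to a limiting homeomorphism of $S^1$ centralizing $\rho(\Gamma)$ without lying in $C_\rho$, contradicting maximality of the centralizer.

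The main obstacle is this uniqueness step. Two subtleties arise: extracting a genuine continuous homeomorphism (and not merely a quasi-conjugacy) from the limiting procedure demands ruling out atoms and non-degenerate gaps in the limit configuration, which hinges on $\mu'\notin M$ together with unboundedness of $\rho$; and showing the resulting centralizer element genuinely lies outside $C_\rho$ exploits that $\mu'$ is not of the uniform $C_\rho$-orbit form. A secondary but related point is the uniqueness of the $C_\rho$-invariant probability in the infinite-$C_\rho$ case used above, which is handled by the same kind of careful tracking of the fibres of $h_{b,\mu}$.
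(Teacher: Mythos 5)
Your first half (finiteness and cyclicity of $C_\rho$) is sound and takes a genuinely different route from the paper: you go through freeness of the centralizer action, H\"older's theorem, an invariant measure and the rotation-number homomorphism, whereas the paper constructs the generator of $C_\rho$ directly as the map $\theta$ sending $x$ to the endpoint of the maximal contractible interval $[x,\theta(x))$. Your route is standard and works, modulo writing out the uniqueness of the $C_\rho$-invariant measure in the infinite case, which is routine.

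The second half has a genuine gap, and in fact its central claim is false as stated. The set $D=\{\delta_x: x\in S^1\}$ of Dirac masses is a closed, $\rho(\Gamma)$-invariant, minimal subset of $\cM^1(S^1)$ (it is $\Gamma$-equivariantly homeomorphic to the minimal action on $S^1$), and for $k=|C_\rho|\ge 2$ it is disjoint from your set $M$ of uniform measures on $C_\rho$-orbits; so $M$ is not the unique minimal invariant closed subset of $\cM^1(S^1)$, and the inclusion $\overline{\rho(\Gamma)\nu}\supset M$ fails already for $\nu=\delta_x$. One can partially repair this by choosing the lift $\nu$ of $\mu$ to be $C_\rho$-invariant, but then the statement you need --- that the orbit closure of every $C_\rho$-invariant measure contains a measure supported on a single $C_\rho$-orbit --- is exactly the strong proximality of $\rho_0$ in disguise, and your proposed proof of it (extracting a limiting homeomorphism centralizing $\rho(\Gamma)$ yet not lying in $C_\rho$) is left as an acknowledged ``obstacle'' with the two decisive subtleties unresolved. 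The missing idea is the one the paper uses: for minimal actions strong proximality is equivalent to contractibility of every proper interval, and the construction of $\theta$ via maximal contractible intervals $[x,\theta(x))$ shows at once that $C_\rho=\langle\theta\rangle$ and that every proper interval of $C_\rho\backslash S^1$, being the image of an interval contained in some $[x,\theta(x))$, can be contracted. Without this (or an equivalent contraction argument) the second assertion of the theorem is not proved.
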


For minimal circle actions one observes that strong proximality is equivalent to the property that every proper interval can be contracted; we recall that a subset $F \subset S^1$ can be contracted if there exists a sequence $(\gamma_n)_{n \ge 1}$ such that $\lim_{n \r \infty} \,{\rm diam}(\rho (\gamma_n) F) = 0$. The idea of the proof of Theorem \ref{theo3.5} is then as follows: since $\rho$ is minimal unbounded, in particular not equicontinuous, every point $x \in S^1$ has a neighborhood which can be contracted. For every $x \in S^1$ the set $\{I = [x,y) | I$ can be contracted$\}$ is then totally ordered by inclusion, non-empty, and hence contains a unique maximal element which we denote $[x,\theta(x))$. Then one verifies that $\theta: S^1 \r S^1$ is a homeomorphism which is periodic and $C_\rho = \IZ_{\cH^+}\big(\rho(\Gamma)\big) = \langle \theta \rangle$. By construction, every proper interval in the quotient $C_\rho \backslash S^1$ can be contracted. We record the following immediate,

\begin{corollary}\label{cor3.6}
Let $\rho: \Gamma \r \H(S^1)$ be minimal unbounded. Then $\rho$ is strongly proximal if and only if $\cZ_{\cH^+}\big(\rho(\Gamma)\big) = (e)$.
\end{corollary}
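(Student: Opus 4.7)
The plan is to read off both implications from Theorem~\ref{theo3.5}, which already produces the quotient map $\varphi: S^1 \to C_\rho\backslash S^1$ and identifies the quotient action $\rho_0$ as strongly proximal.  For the direction $C_\rho = (e) \Rightarrow \rho$ strongly proximal, this is tautological: when the centralizer is trivial the map $\varphi$ is a homeomorphism, so $\rho$ is conjugate via $\varphi$ to $\rho_0$ and therefore inherits strong proximality.

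For the converse I would argue by contradiction.  Assume $\rho$ is strongly proximal but $C_\rho = \langle\theta\rangle$ is nontrivial, say of order $k \ge 2$.  The first step is to observe that $\theta$ has no fixed points on $S^1$: a fixed point $p$ would split $S^1 \setminus \{p\}$ into an arc on which $\theta$ would restrict to a monotone, orientation-preserving homeomorphism of finite order, which forces $\theta = \mathrm{id}$.  The second step is to choose any $\theta$-invariant probability measure $\mu \in \cM^1(S^1)$; a concrete choice is the uniform average of the Dirac masses along a single $\theta$-orbit.  Because $\theta$ commutes with every $\rho(\gamma)$, the pushforward $\rho(\gamma)_*\mu$ is again $\theta$-invariant, and since the set of $\theta$-invariant probability measures is weak-$*$ closed in $\cM^1(S^1)$, the closure $\overline{\rho(\Gamma)\mu}$ lies entirely in this set.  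But the Dirac masses in $\cM^1(S^1)$ that are $\theta$-invariant are precisely those concentrated at fixed points of $\theta$, and by the first step there are none.  Hence $\overline{\rho(\Gamma)\mu}$ contains no Dirac mass, contradicting strong proximality of $\rho$.

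There is no real obstacle once Theorem~\ref{theo3.5} is in hand; the argument is short enough to justify the word \emph{immediate}.  The only point that requires a separate remark is the fixed-point-freeness of a nontrivial periodic element of $\H(S^1)$, and this is entirely elementary.  An alternative one could run instead, using the characterization recalled just before the corollary, is to contract a proper arc $F = [x,\theta x]$ by some sequence $\rho(\gamma_n)$ and note that the translates $\rho(\gamma_n)\theta^j F = \theta^j\rho(\gamma_n) F$ tile $S^1$ while each has diameter tending to $0$, again a contradiction; but the measure-theoretic version seems more transparent.
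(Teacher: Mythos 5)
Your proof is correct, but the converse direction takes a genuinely different route from the paper. The paper derives the corollary ``immediately'' from the mechanism behind Theorem~\ref{theo3.5}: the generator $\theta$ of $\cZ_{\cH^+}(\rho(\Gamma))$ is constructed as the endpoint of the maximal contractible interval $[x,\theta(x))$, and since for minimal actions strong proximality is equivalent to contractibility of every proper interval, strong proximality of $\rho$ forces $\theta=\mathrm{id}$ directly from maximality (this is essentially the ``alternative'' tiling argument you sketch at the end). Your main argument instead works straight from the measure-theoretic definition of strong proximality: you produce a $\theta$-invariant probability measure $\mu$, note that $\overline{\rho(\Gamma)\mu}$ stays inside the weak-$*$ closed set of $\theta$-invariant measures, and observe that no Dirac mass is $\theta$-invariant because a nontrivial finite-order element of $\H(S^1)$ is fixed-point free. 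All steps check out: the commutation $\theta_*\rho(\gamma)_*\mu=\rho(\gamma)_*\theta_*\mu$ is exactly what is needed, and the fixed-point-freeness argument is sound (note it uses finiteness of the order of $\theta$, which you correctly take from Theorem~\ref{theo3.5}; alternatively, $\mathrm{Fix}(\theta)$ is closed and $\rho(\Gamma)$-invariant, hence empty or all of $S^1$ by minimality, which avoids even that). What your approach buys is independence from the equivalence ``strongly proximal $\Leftrightarrow$ every proper interval contracts'' for minimal actions, which the paper invokes implicitly; what it costs is that it is slightly less ``immediate'' than reading the conclusion off the construction of $\theta$. The easy direction is handled identically in both: trivial centralizer means the strongly proximal quotient of Theorem~\ref{theo3.5} is $\rho$ itself.
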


We now establish the uniqueness of a proximal $S^1$-factor:

\begin{proposition}\label{prop3.7}
Let $\rho : \Gamma \r \H(S^1)$ be minimal unbounded and let $(\varphi, \rho_0)$ be a non-trivial $S^1$-factor. If $\rho_0$ is strongly proximal then $\varphi$ is up to conjugation the quotient map by $\cZ_{\cH^+}\big(\rho(\Gamma)\big)$.
\end{proposition}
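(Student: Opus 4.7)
The strategy splits into two parts: first, show that $\varphi$ is $C_\rho$-invariant and therefore factors through the canonical quotient $\pi \colon S^1 \to C_\rho \backslash S^1$; second, show that the induced map $\bar\varphi \colon C_\rho \backslash S^1 \to S^1$ is a homeomorphism, which gives the conjugation of $\varphi$ with $\pi$ asserted in the proposition.

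For the first part, fix a generator $\theta$ of the finite cyclic group $C_\rho$ (Theorem \ref{theo3.5}) and consider the continuous map $\Phi \colon S^1 \to S^1 \times S^1$ defined by $\Phi(x) = (\varphi(x), \varphi(\theta(x)))$. Since $\theta$ commutes with $\rho(\Gamma)$ and $\varphi$ intertwines $\rho$ with $\rho_0$, the map $\Phi$ is $\Gamma$-equivariant with respect to $\rho$ on the source and the diagonal action $\rho_0 \times \rho_0$ on the target. The image $\Phi(S^1)$ is a closed $(\rho_0 \times \rho_0)$-invariant subset of $S^1 \times S^1$. For any $(a,b) \in \Phi(S^1)$, strong proximality of $\rho_0$ applied to the probability measure $\frac{1}{2}(\delta_a + \delta_b)$ yields a sequence $\gamma_n$ with $\rho_0(\gamma_n)a$ and $\rho_0(\gamma_n)b$ converging to a common point; hence $\Phi(S^1)$ meets the diagonal. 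Consequently the set $A = \{x \in S^1 : \varphi(x) = \varphi(\theta(x))\}$ is non-empty, and by continuity closed, and by $\theta \rho(\gamma) = \rho(\gamma) \theta$ also $\rho(\Gamma)$-invariant. Minimality of $\rho$ forces $A = S^1$, so $\varphi \circ \theta = \varphi$, and inductively $\varphi \circ \theta^j = \varphi$ for every $j$. Thus $\varphi$ is $C_\rho$-invariant and factors as $\varphi = \bar\varphi \circ \pi$ with $\bar\varphi$ continuous and equivariant between the strongly proximal quotient action $\rho_{sp}$ and $\rho_0$.

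For the second part, both $\rho_{sp}$ and $\rho_0$ are minimal strongly proximal and so have trivial centralizer in $\H(S^1)$ by Corollary \ref{cor3.6}. I aim to produce, from any hypothetical non-injectivity of $\bar\varphi$, a non-trivial homeomorphism of $C_\rho \backslash S^1$ commuting with $\rho_{sp}(\Gamma)$. If some fiber $\bar\varphi^{-1}(y)$ has at least two points, equivariance transfers this to every fiber over the dense $\rho_0$-orbit of $y$, and upper semi-continuity of fiber cardinality extends the conclusion to every fiber. The cyclic order on $C_\rho \backslash S^1 \cong S^1$ endows each fiber with a canonical cyclic order, and the ``cyclic successor'' map $\sigma$ sending each point to the next point of its fiber is a continuous homeomorphism of order $\geq 2$ that commutes with $\rho_{sp}(\Gamma)$ (because $\rho_{sp}(\gamma)$ preserves both fibers and cyclic orientation). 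This contradicts $C_{\rho_{sp}} = (e)$, so $\bar\varphi$ is an equivariant continuous bijection between compact Hausdorff spaces, hence a homeomorphism.

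The main obstacle is in the second part: making the cyclic-successor construction rigorous requires first ensuring that $\bar\varphi$ is a topological covering map, so that its fibers vary continuously and $\sigma$ is genuinely continuous with constant order. The set of critical points of $\bar\varphi$ (where it fails to be locally injective) is closed and $\rho_{sp}(\Gamma)$-invariant, so by minimality it is either empty or all of $C_\rho \backslash S^1$; the latter alternative is ruled out using the positive degree of $\bar\varphi$ together with a more delicate topological argument. Once $\bar\varphi$ is known to be a covering map, its deck transformation group is a finite cyclic subgroup of $C_{\rho_{sp}} = (e)$, hence trivial, so $\bar\varphi$ has degree one. The composition $\bar\varphi^{-1} \circ \varphi = \pi$ then exhibits $\varphi$ as the canonical quotient map by $C_\rho$, up to conjugation by $\bar\varphi$ in the target, completing the proof.
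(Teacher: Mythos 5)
Your first part is correct and is a pleasant alternative to the paper's argument: where you apply strong proximality to the measure $\frac{1}{2}(\delta_a+\delta_b)$ to force the closed $(\rho_0\times\rho_0)$-invariant set $\Phi(S^1)$ to meet the diagonal, the paper instead studies the intervals $I_x=\varphi([x,\theta(x)])$, splits into the cases $I_x=S^1$ and $I_x$ proper, and uses the contractibility built into the construction of $\theta$ to rule out the bad subcase. Both routes end the same way, by observing that $\{x:\varphi(x)=\varphi(\theta(x))\}$ is closed, $\rho(\Gamma)$-invariant and non-empty, hence all of $S^1$ by minimality. Your version is arguably cleaner since it avoids the case analysis on $I_x$.

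The second part, however, has a genuine gap. Your entire construction of the ``cyclic successor'' $\sigma$ rests on $\bar\varphi$ being a covering map, and you defer exactly the hard point --- ruling out that the set where $\bar\varphi$ fails to be locally injective is all of $C_\rho\backslash S^1$ --- to ``a more delicate topological argument'' that is never supplied. A continuous surjection $S^1\to S^1$ of non-zero degree need not be a covering: it can collapse arcs to points or fold back, its fibers can be infinite, and fiber cardinality is not upper semi-continuous, so the successor map need not be defined, let alone continuous. The minimality dichotomy (critical set empty or everything) does not by itself resolve this, and non-zero degree alone does not exclude the second alternative (consider a monotone map with flat spots composed with $z\mapsto z^k$). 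The paper bypasses all of this with a short direct argument you should adopt: if $\bar\varphi(s)=\bar\varphi(t)$ with $s\neq t$, then since $\deg\bar\varphi\neq 0$ (Lemma \ref{lem3.8}) one of the arcs, say $[s,t]$, satisfies $\bar\varphi([s,t])=S^1$; but $[s,t]$ is a proper interval, hence can be contracted by the minimal strongly proximal action $\rho_{sp}$, and then $\bar\varphi\bigl(\rho_{sp}(\gamma_n)[s,t]\bigr)=\rho_0(\gamma_n)\,\bar\varphi([s,t])=S^1$ has arbitrarily small diameter by uniform continuity of $\bar\varphi$ --- a contradiction. This gives injectivity of $\bar\varphi$ at once, and an injective continuous surjection of compact Hausdorff spaces is a homeomorphism, which is the conclusion you want.
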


First we'll need
\begin{lemma}\label{lem3.8}
If $\rho$ is minimal and $(\rho_0,\varphi)$ is a non-trivial $S^1$-factor then deg $\varphi \not= 0$ and $\varphi$ is surjective.
\end{lemma}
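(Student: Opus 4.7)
The plan is to treat the two conclusions separately. First I would show that $\varphi$ is surjective by combining the $\rho_0(\Gamma)$-invariance of the image $\varphi(S^1)$ with the minimality of $\rho$. Then, with surjectivity in hand, I would rule out $\deg \varphi = 0$ via a lifting argument that exploits both the minimality of $\rho$ and the fact that lifts in ${\rm Homeo}^+_{\IZ}(\IR)$ commute with the unit translation.

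For surjectivity: the image $\varphi(S^1)$ is closed and connected, and the intertwining $\rho_0(\gamma)\varphi = \varphi\rho(\gamma)$ makes it $\rho_0(\Gamma)$-invariant. Suppose $\varphi$ is not surjective; then $\varphi(S^1)$ is a proper closed arc $[a,b]$, and since $\varphi$ is non-constant we have $a \neq b$. Any orientation preserving self-homeomorphism of $S^1$ which fixes a proper non-degenerate arc setwise must preserve its two-point endpoint set, so $\rho_0(\Gamma)\{a,b\} = \{a,b\}$. By equivariance, $\varphi^{-1}(\{a,b\})$ is $\rho(\Gamma)$-invariant; it is closed, non-empty (because $\{a,b\} \subset \varphi(S^1)$), and proper, since otherwise $\varphi(S^1) \subset \{a,b\}$ together with connectedness of $\varphi(S^1)$ would force $\varphi$ constant. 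This contradicts the minimality of $\rho$.

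For the degree: suppose $\deg\varphi = 0$, so $\varphi$ lifts to a continuous map $\tilde\varphi : S^1 \rightarrow \IR$ whose image is a compact interval $[A,B]$ with $A < B$ by non-constancy. Surjectivity of $\varphi$ then forces $B - A \geq 1$. For each $\gamma \in \Gamma$, the two maps $\tilde\varphi \circ \rho(\gamma)$ and $\overline{\rho_0(\gamma)} \circ \tilde\varphi$ are lifts of the same map $\varphi\rho(\gamma) = \rho_0(\gamma)\varphi : S^1 \rightarrow S^1$, so they differ by an integer constant that can be absorbed into the choice of lift $\overline{\rho_0(\gamma)} \in {\rm Homeo}^+_{\IZ}(\IR)$; after this adjustment one has $\tilde\varphi \circ \rho(\gamma) = \overline{\rho_0(\gamma)} \circ \tilde\varphi$ for every $\gamma$. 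In particular $\overline{\rho_0(\gamma)}([A,B]) \subset [A,B]$, and since $\overline{\rho_0(\gamma)}$ is increasing and satisfies $\overline{\rho_0(\gamma)}(x+1) = \overline{\rho_0(\gamma)}(x)+1$, the inequality $B - A \geq 1$ forces $\overline{\rho_0(\gamma)}(B) \geq \overline{\rho_0(\gamma)}(A) + 1$, hence $\overline{\rho_0(\gamma)}(A) \leq B - 1$ for every $\gamma$. On the other hand, picking any $x_0 \in \tilde\varphi^{-1}(A)$, the minimality of $\rho$ makes $\rho(\Gamma)x_0$ dense in $S^1$, so by continuity of $\tilde\varphi$ the set $\{\overline{\rho_0(\gamma)}(A) : \gamma \in \Gamma\} = \tilde\varphi(\rho(\Gamma)x_0)$ is dense in $[A,B]$ and thus has supremum $B$. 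This contradicts the bound $\overline{\rho_0(\gamma)}(A) \leq B - 1 < B$.

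The main technical step is the bookkeeping involved in choosing the lifts $\overline{\rho_0(\gamma)}$ so that the intertwining identity descends to the reals; once this is done, the contradiction falls out from the tension between the $\IZ$-equivariance of the lifts (which caps $\overline{\rho_0(\gamma)}(A)$ strictly below $B$) and the orbit-density supplied by the minimality of $\rho$ (which forces the supremum to be exactly $B$).
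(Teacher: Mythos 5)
Your proof is correct, but it takes a genuinely different route from the paper's. The paper handles everything in one stroke: assuming $\deg\varphi=0$, it takes a lift $\wt{\varphi}:S^1\r\IR$, lets $M=\sup\wt{\varphi}$ be attained at $x_0$, and uses the intertwining relation $\wt{\varphi}\,\rho(\gamma)=T^{c(\gamma)}\,\overline{\rho_0(\gamma)}\,\wt{\varphi}$ together with monotonicity of $\overline{\rho_0(\gamma)}$ to show that $\varphi(x_0)$ is a global fixed point of $\rho_0(\Gamma)$; then $\varphi^{-1}(\varphi(x_0))$ is a non-empty closed $\rho(\Gamma)$-invariant set, hence all of $S^1$ by minimality, forcing $\varphi$ to be constant. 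Surjectivity is then automatic, since a non-surjective circle map has degree zero. You instead prove surjectivity first by an independent argument (the endpoints of the image arc give a proper closed invariant set), and then use surjectivity as an input to the degree argument, getting the contradiction from the tension between the bound $\overline{\rho_0(\gamma)}(A)\le B-1$ (coming from $B-A\ge 1$ and $\IZ$-equivariance of the lifts) and the density of $\wt{\varphi}(\rho(\Gamma)x_0)$ in $[A,B]$. Both arguments are sound; the paper's is more economical in that the single sup argument delivers both conclusions and needs no prior surjectivity, whereas your degree step genuinely depends on the surjectivity step, so the order of the two halves is essential — which you do respect. (A small remark: in your setting one in fact has $\overline{\rho_0(\gamma)}([A,B])=[A,B]$, so $\overline{\rho_0(\gamma)}$ fixes $A$ outright and the orbit image is the single point $\{A\}$, which shortens your final contradiction and removes the need for $B-A\ge 1$.)
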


\begin{proof}
Assume that deg $\varphi = 0$ and let $\wt{\varphi}: S^1 \r \IR$ be a lift of $\varphi$. Then there is for every $\gamma \in \Gamma$ a $c(\gamma) \in \IZ$ such that
\begin{equation*}
\wt{\varphi} \rho(\gamma) = T^{c(\gamma)} \;\overline{\rho_0(\gamma)} \;\wt{\varphi}\,.
\end{equation*}

\n
Let $M = \sup\limits_{x \in S^1} \,\wt{\varphi}(x)$ and $x_0 \in S^1$ with $M = \wt{\varphi}(x_0)$. Then

\vspace{-2ex}
\begin{equation*}
\begin{split}
\wt{\varphi} (x_0) & = \sup\limits_{x} \;\wt{\varphi}(x) = \sup\limits_x \;\wt{\varphi} (\rho(\gamma) x)
\\
& = c(\gamma) + \overline{\rho_0(\gamma)} \;\big(\sup\limits_x \;\wt{\varphi}(x)\big)
\\
& = c(\gamma) + \overline{\rho_0(\gamma)} \;\big(\wt{\varphi}(x_0)\big)
\end{split}
\end{equation*}

\n
which implies that $\varphi(x_0) \in S^1$ is fixed by $\rho_0(\gamma)$, $\forall \gamma \in \Gamma$. The non-empty closed subset $\varphi^{-1} \big(\varphi(x_0)\big)$ being $\rho(\Gamma)$-invariant equals $S^1$ and thus $\varphi$ is constant.
\end{proof}

{\it Proof of Proposition \ref{prop3.7}:} Let $\theta$ be a generator of $\cZ_{\cH^+}\big(\rho(\Gamma)\big)$ constructed as indicated in the discussion following Theorem \ref{theo3.5}. We claim that $\varphi \theta = \varphi$. For $x \in S^1$ we consider the closed connected subset $I_x : = \varphi([x,\theta(x)])$ and, for a fixed $x$, distinguish two cases: 

\medskip\n
a) $I_x = S^1$: then, either $\varphi\big(\theta(x)\big) = \varphi(x)$ which implies by equivariance that $\varphi \theta$ and $\varphi$ coincide on $\rho(\Gamma) \cdot x$ and hence everywhere; or there is $y \in \big(x, \theta(x)\big)$ with $\varphi(x) = \varphi(y)$ and $\varphi([x,y]) = S^1$. But by the construction of $\theta$, $[x,y]$ can be contracted which by continuity of $\varphi$ implies the same for $\varphi([x,y]) = S^1$; this is absurd. 

\medskip\n
b) $I_x$ is a proper subset of $S^1$: observe that for all $\gamma \in \Gamma$, $\rho_0(\gamma) I_x = I_{\rho(\gamma) x}$. Since $I_x$ is a proper interval and $\rho_0$ is strongly proximal we can choose a sequence $(\gamma_n)_{n \ge 1}$ such that diam $(\rho_0(\gamma_n) I_x)$ tends to zero for $n \r \infty$ and $(\rho(\gamma_n)x)_{n \ge 1}$ converges to some point $y$, which by continuity of $\varphi$ implies that $\varphi(y) = \varphi \theta (y)$. Again by using equivariance and minimality we get $\varphi = \varphi \theta$.

\medskip
Thus the claim is established and therefore by passing to the quotient by \linebreak $\cZ_{\cH^+}\big(\rho(\Gamma)\big)$ we may assume that $\rho$ is strongly proximal as well. We already know (Lemma \ref{lem3.8}) that deg $\varphi \not= 0$ and $\varphi$ is surjective. Assume then that there exists $s \not= t$ in $S^1$ with $\varphi(s) = \varphi(t)$. Since deg $\varphi \not= 0$ we have either $\varphi([s,t]) = S^1$ or $\varphi([t,s]) = S^1$. Assume without loss of generality that the former occurs. Since $[s,t]$ is a proper intervall in $S^1$ it can be contracted by $\rho$ which clearly implies a contradiction. Thus $\varphi$ is injective as well and hence a homeomorphism. \hfill $\square$

\section{Strongly proximal actions, boundary maps and cocycles}
\setcounter{equation}{0}

In this section we study continuous group actions of a locally compact group $G$ on the circle using boundary maps. The main goal is Theorem \ref{theo4.5} which gives a description of the space of conjugacy classes of minimal strongly proximal actions in terms of an explicit space of cocycles on appropriate Poisson boundaries of $G$.

\subsection{Cohomological preliminaries}

For a non-discrete group some care must be taken when defining the analogues of the various Euler classes in the continuous context. Let $G$ be locally compact, second countable and let $A$ be either $\IZ$ or $\IR$. Then $H^\bullet_{bc}(G,A)$ denotes the cohomology defined via $A$-valued bounded Borel cochains on $G$; in the case $A = \IR$ these cohomology groups coincide with those obtained by taking the subcomplex of bounded continuous cochains on $G$; for more details see \cite{B-I-W06}, \S 2.3. Given a continuous action $\rho: G \r \H(S^1)$ the function $(g,h) \longmapsto c\big(\rho(g), \rho(h)\big)$ is a bounded Borel (inhomogeneous) $2$-cocycle on $G$ and leads to classes $\rho^*(e^b) \in H^2_{bc} (G, \IZ)$ and $\rho^*(e^b_\IR) \in H^2_{bc} (G, \IR)$ which are respectively the bounded Euler class and the real bounded Euler class of the continuous action.

\medskip
Recall that if $(B,\nu_B)$ is a standard Lebesgue $G$-space which is doubly ergodic and amenable there is a canonical isometric isomorphism, see \cite{Mo01} Thm. 7.5.3,
\begin{equation}\label{4.1}
H_{bc}^2(G, \IR) \simeq Z \,L_{\rm alt}^\infty(B^3,\IR)^G
\end{equation}

\noindent
where the right-hand side is the space of measurable, essentially bounded, alternating $G$-invariant cocycles on $B^3$.

\begin{proposition}\label{prop4.1}
Let $\rho : G \r \H(S^1)$ be a continuous action and assume that there exists a measurable $G$-equivariant map $\varphi : B \rightarrow S^1$. Then under the isomorphism in {\rm (\ref{4.1})} the class $\rho^*(e^b_\IR)$ corresponds to the cocycle 
\begin{align*}
B^3 &  \rightarrow \IR
\\[-1ex]
(x,y,z) & \longmapsto - \mbox{\footnotesize $\dis\frac{1}{2}$} \;o\big(\varphi(x), \varphi(y), \varphi(z)\big)\,.
\end{align*}
If in addition the essential image of $\varphi$ contains at least three points, we have
\begin{equation*}
\|\rho^* (e^b_\IR)\| = \mbox{\footnotesize $\dis\frac{1}{2}$} \,.
\end{equation*}
\end{proposition}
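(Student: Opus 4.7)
The plan is twofold. First, I identify the cocycle
\begin{equation*}
\eta(x,y,z) := -\tfrac{1}{2}\, o\bigl(\varphi(x), \varphi(y), \varphi(z)\bigr)
\end{equation*}
on $B^3$ as a representative of $\rho^*(e^b_\IR)$ under the isomorphism (\ref{4.1}); then I compute $\|\eta\|_\infty$ and invoke the isometric nature of (\ref{4.1}) to read off the norm of the class. The alternating property, $G$-invariance, and cocycle identity for $\eta$ on $B^4$ follow at once from the corresponding properties of $o$ and the $G$-equivariance of $\varphi$, so $\eta$ indeed lies in $Z L^\infty_{\rm alt}(B^3,\IR)^G$.

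For the first step, my strategy is to establish the universal version on $\H(S^1)$---namely, that the Borel $\H(S^1)$-invariant cocycle $-\frac{1}{2}\,o$ on $(S^1)^3$ represents $e^b_\IR$---and then to appeal to functoriality. I homogenize the inhomogeneous Euler cocycle $c$ of (\ref{3.1}) to the $\H(S^1)$-invariant Borel cocycle
\begin{equation*}
C(g_1,g_2,g_3) := c(g_1^{-1} g_2,\, g_2^{-1} g_3)
\end{equation*}
on $\H(S^1)^3$, which represents $e^b_\IR$ in the Borel bar resolution. Fixing $x_0 \in S^1$ and setting $\pi(g) := g(x_0)$, a direct computation using the lifts $\ov{g} \in {\rm Homeo}^+_\IZ(\IR)$ normalized by $\ov{g}(0) \in [0,1)$ produces a bounded Borel function $\beta : \H(S^1)^2 \r \IR$ with
\begin{equation*}
C(g_1,g_2,g_3) + \tfrac{1}{2}\,o\bigl(\pi(g_1),\pi(g_2),\pi(g_3)\bigr) \;=\; d\beta(g_1,g_2,g_3).
\end{equation*}
Hence the pullback of $-\frac{1}{2}\,o$ via $\pi^{\times 3}$ is cohomologous to $C$ in the bounded Borel bar complex of $\H(S^1)$. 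Composing with $\rho: G \r \H(S^1)$ and $\varphi : B \r S^1$ and using that the Monod identification (\ref{4.1}) is natural with respect to measurable equivariant Borel maps, I conclude that $\eta$ corresponds to $\rho^*(e^b_\IR)$.

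The norm assertion is then a measure-theoretic comparison. Since (\ref{4.1}) is isometric with respect to the essential sup norm on the right-hand side, $\|\rho^*(e^b_\IR)\| = \|\eta\|_\infty$. The bound $\|\eta\|_\infty \le \frac{1}{2}$ is unconditional because $o$ takes values in $\{-1,0,1\}$. If the essential image of $\varphi$ contains at least three distinct points $p_1,p_2,p_3$, say positively oriented, then for sufficiently small pairwise-disjoint arcs $J_i \ni p_i$ the sets $A_i := \varphi^{-1}(J_i) \subset B$ all have positive $\nu_B$-measure, and $o(\varphi(x_1),\varphi(x_2),\varphi(x_3)) = +1$ on $A_1 \times A_2 \times A_3$; by Fubini this product has positive $\nu_B^{\otimes 3}$-measure, yielding $\|\eta\|_\infty = \frac{1}{2}$ and thus $\|\rho^*(e^b_\IR)\| = \frac{1}{2}$.

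The main technical hurdle is the universal coboundary identity on $\H(S^1)^3$. This statement is classical (implicit in the work of Ghys \cite{Gh87}), but the exact sign and normalization must be tracked carefully against the definition of $c$ in (\ref{3.1}), and the auxiliary function $\beta$ must be built piecewise from the lifting convention $\ov{g}(0)\in[0,1)$; it is not canonical. The possible subtlety that $S^1$ is not assumed amenable as an $\H(S^1)$-space is sidestepped by phrasing the universal claim at the level of Borel $\H(S^1)$-invariant cocycles on $(S^1)^3$ and their pullbacks, rather than attempting to compute $H^\bullet_{bc}(\H(S^1),\IR)$ via an analogue of (\ref{4.1}) for $\H(S^1)$ itself.
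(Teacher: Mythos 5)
Your proposal follows essentially the same route as the paper: the universal coboundary identity relating the homogenized Euler cocycle to $-\frac{1}{2}o$ at a basepoint is exactly the paper's Lemma \ref{lem4.2} (quoted from \cite{I02}), and your norm computation at the end supplies the step the paper leaves implicit. The one place where you assert rather than argue is the transfer step: ``the Monod identification (\ref{4.1}) is natural with respect to measurable equivariant Borel maps'' is not a formal naturality statement --- $S^1$ is not one of the spaces to which (\ref{4.1}) applies, and the claim that the class represented in the bar resolution on $G^{\bullet+1}$ by the orbit-map pullback of $-\frac{1}{2}o$ corresponds under (\ref{4.1}) to the $\varphi$-pullback on $B^3$ is precisely the nontrivial content of the paper's proof. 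The paper justifies it by observing that $(B^\infty_{\rm alt}((S^1)^{\bullet+1},\IR),d)$ is a strong resolution of the trivial module $\IR$ (\cite{B-I02}, Prop.~2.1), so that the two morphisms of $G$-complexes $c^{(\bullet)}\rho^{(\bullet)}$ and $\varphi^{(\bullet)}$, both extending the identity into the relatively injective strong resolution $(L^\infty_{\rm alt}(B^{\bullet+1},\IR),d)$, are $G$-equivariantly homotopic and hence induce the same (isometric) map in cohomology. If you fill in that step, or cite it, your argument is complete; everything else --- the verification that $\eta$ is an alternating invariant cocycle, the bound $\|\eta\|_\infty\le\frac{1}{2}$, and the positive-measure-arcs argument giving equality when the essential image contains three points --- is correct and matches the paper.
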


We will need the following explicit relationship between the Euler and orientation cocycle; see for instance \cite{I02} Lemma 2.1.

\begin{lemma}\label{lem4.2}
For every $f,g \in \H(S^1)$ we have 
\begin{align*}
2c(f,g) = & - o\big(\dot{o},f(\dot{o}), \,fg(\dot{o})\big) + 1
\\
& + \;\big(\delta_{\dot{o}} (fg(\dot{o})\big) - \delta_{\dot{o}}\big(f(\dot{o})\big) - \delta_{\dot{o}} \big(g(\dot{o})\big)\big)\,.
\end{align*}
where $\dot{o} \in \IZ \backslash \IR$.
\end{lemma}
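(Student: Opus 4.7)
The plan is to translate both sides of the identity into elementary data attached to the canonical lift, and then to verify the resulting arithmetic statement by a short case analysis on the coincidences and cyclic ordering of $\dot o$, $f(\dot o)$ and $fg(\dot o)$. For clarity I will carry out the argument for $\dot o = 0 + \IZ$; the general case is settled by the same argument applied to the lift normalized at an arbitrary representative of $\dot o$.

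First, set $a := \ov f(0)$, $b := \ov g(0)$, $d := \ov{fg}(0)$, all lying in $[0,1)$ by choice of canonical section. Evaluating the defining relation (\ref{3.1}) at $0 \in \IR$ gives
\begin{equation*}
c(f,g) = \ov f\bigl(\ov g(0)\bigr) - \ov{fg}(0) = \ov f(b) - d \in \{0,1\},
\end{equation*}
and since $\ov f$ is strictly increasing and commutes with $T$, one has $\ov f(b) \in [a, a+1)$, whence
\begin{equation*}
c(f,g) = 0 \iff d \ge a \iff \ov f(b) < 1, \qquad c(f,g) = 1 \iff d < a \iff \ov f(b) \ge 1,
\end{equation*}
with the borderline $d = a$ only occurring when $b = 0$.

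Next I would decode the right-hand side into the same data. Under the identification $S^1 = \IZ \backslash \IR$ with representatives in $[0,1)$,
\begin{equation*}
\delta_{\dot o}\bigl(f(\dot o)\bigr) = \mathbf 1_{\{a = 0\}}, \quad \delta_{\dot o}\bigl(g(\dot o)\bigr) = \mathbf 1_{\{b = 0\}}, \quad \delta_{\dot o}\bigl(fg(\dot o)\bigr) = \mathbf 1_{\{d = 0\}},
\end{equation*}
while the orientation $o(\dot o, f(\dot o), fg(\dot o))$ equals $+1$ if $0 < a < d$, $-1$ if $0 < d < a$ and $0$ when two of $0, a, d$ coincide. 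With these translations the claim reduces to a purely elementary identity in the triple $(a, b, d) \in [0,1)^3$.

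To finish I would run through four cases: (i) $a = 0$; (ii) $a > 0$ and $b = 0$; (iii) $a, b > 0$ and $d = 0$; (iv) $a, b, d > 0$ pairwise distinct, the last split according to whether $a < d$ or $a > d$. In (i) and (ii) the inequality $\ov f(b) < 1$ forces $c(f,g) = 0$; in (iii) the equation $\ov f(b) = 1$ forces $c(f,g) = 1$; in (iv) the value of $c(f,g)$ is dictated by the dichotomy above. A direct substitution then confirms both sides agree in each case. The main obstacle is entirely bookkeeping: one has to enumerate the coincidence patterns among $\dot o, f(\dot o), g(\dot o), fg(\dot o)$ carefully, and invoke strict monotonicity of the canonical lift to rule out the configuration $a = d$ with $b > 0$, which is the only place where the dichotomy $c = 0$ versus $c = 1$ would otherwise be ambiguous. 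No deeper ingredient is required.
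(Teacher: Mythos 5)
Your proposal is correct. Note that the paper does not actually prove Lemma \ref{lem4.2} --- it only cites \cite{I02}, Lemma 2.1 --- so there is no in-text argument to compare against; the intended verification is precisely the elementary one you give. Your reduction to the triple $(a,b,d)=(\ov f(0),\ov g(0),\ov{fg}(0))\in[0,1)^3$, the dichotomy $c(f,g)=0\iff d\ge a$, the exclusion of the configuration $a=d$ with $b>0$ via strict monotonicity of $\ov f$, and the four-case substitution all check out (in particular, in the case $a=0$ one also uses $d=0\iff b=0$, which follows from the same monotonicity), so the argument is complete modulo the routine substitutions you indicate.
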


\n
{\it Proof of Proposition \ref{prop4.1}.} This is a standard argument using \cite{B-I02}. Since $G$ acts continuously on $S^1$, the $G$-complex $(B^\infty_{\rm alt}\big((S^1)^{\bullet + 1}, \IR), d)$ of bounded alternating Borel cochains on $S^1$ is a strong resolution of the trivial $G$-module $\IR$ (see \cite{B-I02} Prop.~2.1). Then, owing to the properness of the $G$-action on $G$ and the amenability of the $G$-action on $B$, the two complexes of $G$-modules $(L_{\rm alt}^\infty(G^{\bullet + 1}, \IR),d)$ and $(L_{\rm alt}^\infty(B^{\bullet + 1}, \IR),d)$ are strong resolutions of $\IR$ by relatively injective $G$-Banach modules \cite{Mo01} \S 7.5. Then there is a morphism of $G$-complexes
\begin{equation}\label{4.2}
c^\bullet : L_{\rm alt}^\infty (G^{\bullet + 1}, \IR) \r L_{\rm alt}^\infty (B^{\bullet + 1}, \IR)
\end{equation}

\n
extending the identity and any two such are $G$-equivariantly homotopic; the canonical map induced in cohomology is then an isometric isomorphism which in degree two gives the one mentioned in (\ref{4.1}).

\medskip
Consider the morphisms of $G$-resolutions 
\begin{equation*}
\rho^{(n)} : B^\infty_{\rm alt} \big(S^1)^{n + 1}, \IR\big) \r L_{\rm alt}^\infty (G^{n+ 1}, \IR)
\end{equation*}
and
\begin{equation*}
\varphi^{(n)} : B^\infty_{\rm alt} \big((S^1)^{n + 1}, \IR\big) \r L_{\rm alt}^\infty (B^{n+ 1}, \IR)
\end{equation*}

\medskip\n
defined respectively for $\alpha : (S^1)^{n+1} \r \IR$ by 
\begin{equation*}
\rho^{(n)} (\alpha) (g_0, \dots,g_n) : = \alpha (\rho(g_0) \dot{o}, \dots, \rho(g_n) \,\dot{o})
\end{equation*}
and
\begin{equation*}
\varphi^{(n)} (\alpha) (x_0, \dots,x_n) : = \alpha \big(\varphi(x_0) , \dots, \varphi(x_n)\big)\,.
\end{equation*}

\medskip\n
Then $c^{(\bullet)} \,\rho^{(\bullet)}$ and $\varphi^{(\bullet)}$ are both morphisms of $G$-complexes extending the identity and therefore, since $(L_{\rm alt}^\infty (B^{\bullet + 1}, \IR),d)$ is a resolution by relatively injective $G$-modules, induce the same map in cohomology. Now we specialize this to degree $n=2$. Let $[o] \in H^2 \big(B^\infty_{\rm alt}(S^{\bullet + 1})\big)$ be the class defined by the orientation cocycle. Then we have by Lemma \ref{lem4.2}: $\rho^{(2)}([o]) = 2 \rho^*(e^b_\IR)$ and in addition that under the isometric isomorphism $c^{(2)}$, $\rho^{(2)}([o])$ is represented by $(x,y,z) \longmapsto o\big(\varphi(x),\varphi(y),\varphi(z)\big)$. In particular $2\|\rho^*(e^b_\IR)\|$ is the essential supremum of the latter cocycle. \hfill $\square$

\medskip
We will see below that the existence of a map as in Proposition \ref{prop4.1} imposes strong conditions on the type of action considered. Before we turn to this we'll need,
\begin{lemma}\label{lem4.3}
Let $\rho_0,\rho_1: G \r \H(S^1)$ be continuous actions and $\varphi: S^1 \r S^1$ a degree $k$ covering such that
\begin{equation*}
\rho_0(g) \,\varphi = \varphi \,\rho_1(g), \quad \forall g \in G\,.
\end{equation*}

\n
Then we have the following equality
\begin{equation*}
\rho_0^*(e^b) = k  \, \rho_1^*(e^b)
\end{equation*}
in $H^2_{bc}(G, \IZ)$.
\end{lemma}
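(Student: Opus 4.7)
The plan is to lift $\varphi$ to a continuous map $\widetilde{\varphi}: \IR \to \IR$ and then compare the integer cocycles $c_0(g,h) := c(\rho_0(g), \rho_0(h))$ and $c_1(g,h) := c(\rho_1(g), \rho_1(h))$ directly at the level of the lifts in $\mathrm{Homeo}^+_\IZ(\IR)$. The hypothesis that $\varphi$ is a degree-$k$ covering translates into the fundamental identity $\widetilde{\varphi} \circ T = T^k \circ \widetilde{\varphi}$. The intertwining $\rho_0(g) \varphi = \varphi \rho_1(g)$ lifts to: for each $g \in G$ there is a unique $n(g) \in \IZ$ with
\begin{equation*}
\overline{\rho_0(g)} \, \widetilde{\varphi} = T^{n(g)} \, \widetilde{\varphi} \, \overline{\rho_1(g)} \,.
\end{equation*}
I would first verify that $n : G \to \IZ$ is a bounded Borel cochain. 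Measurability follows from the fact that the normalization $\overline{f}(0) \in [0,1)$ makes $f \mapsto \overline{f}$ a Borel section of $p$, composed with continuity of $\widetilde{\varphi}$; boundedness comes from evaluating the displayed identity at $0$, using that $\overline{\rho_1(g)}(0) \in [0,1)$ forces $\widetilde{\varphi}(\overline{\rho_1(g)}(0))$ to stay in the fixed compact set $\widetilde{\varphi}([0,1])$, and that $\overline{\rho_0(g)}(\widetilde{\varphi}(0))$ stays bounded because $\overline{\rho_0(g)}$ commutes with integer translations and maps $[0,1)$ into $[0,2)$.

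The core step is to compute $\overline{\rho_0(g)} \, \overline{\rho_0(h)} \, \widetilde{\varphi}$ in two ways. On one side, by (\ref{3.1}) and the fact that $\overline{\rho_0(gh)}$ commutes with $T$,
\begin{equation*}
\overline{\rho_0(g)} \, \overline{\rho_0(h)} \, \widetilde{\varphi} \;=\; \overline{\rho_0(gh)} \, T^{c_0(g,h)} \, \widetilde{\varphi} \;=\; T^{c_0(g,h) + n(gh)} \, \widetilde{\varphi} \, \overline{\rho_1(gh)} \,.
\end{equation*}
On the other side, applying the intertwining relation twice and pushing the intermediate $T^{n(h)}$ past $\overline{\rho_0(g)}$,
\begin{equation*}
\overline{\rho_0(g)} \, \overline{\rho_0(h)} \, \widetilde{\varphi} \;=\; T^{n(g) + n(h)} \, \widetilde{\varphi} \, \overline{\rho_1(g)} \, \overline{\rho_1(h)} \;=\; T^{n(g) + n(h)} \, \widetilde{\varphi} \, T^{c_1(g,h)} \, \overline{\rho_1(gh)} \,,
\end{equation*}
and this is where the lemma really happens: moving $T^{c_1(g,h)}$ through $\widetilde{\varphi}$ multiplies it by $k$, so the right-hand side equals $T^{n(g) + n(h) + k \, c_1(g,h)} \, \widetilde{\varphi} \, \overline{\rho_1(gh)}$.

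Equating the two exponents gives
\begin{equation*}
c_0(g,h) - k \, c_1(g,h) \;=\; n(g) - n(gh) + n(h) \,,
\end{equation*}
so $c_0 - k c_1$ is the coboundary of the bounded Borel $1$-cochain $n$, which yields $\rho_0^*(e^b) = k \, \rho_1^*(e^b)$ in $H^2_{bc}(G, \IZ)$. The only genuinely non-formal input is the identity $\widetilde{\varphi} \circ T = T^k \circ \widetilde{\varphi}$ encoding the degree of $\varphi$; the remaining difficulty lies entirely in justifying that $n$ is Borel and bounded so that the equality is witnessed in the \emph{bounded} continuous cohomology $H^2_{bc}(G,\IZ)$ and not merely in ordinary cohomology.
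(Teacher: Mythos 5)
Your proof is correct and follows essentially the same route as the paper: the paper normalizes $\varphi$ (up to conjugation) to the standard covering induced by $M_k(x)=kx$, lifts the intertwining relation to get a bounded Borel integer-valued function $\alpha$ with $M_k\,\overline{\rho_1(g)}=T^{\alpha(g)}\,\overline{\rho_0(g)}\,M_k$, and applies this to products to exhibit $c_0-k\,c_1$ as the coboundary of $\alpha$ --- exactly your computation with a general lift $\widetilde{\varphi}$ satisfying $\widetilde{\varphi}\,T=T^k\,\widetilde{\varphi}$ in place of $M_k$. Your verification that the discrepancy cochain is Borel and bounded (by evaluation at $0$) also mirrors the paper's.
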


\begin{proof}
Let $p : \IR \r \IZ \backslash \IR$ denote projection and $M_k : \IR \r \IR$, $x \longmapsto k \cdot x$, so that up to conjugation we have $p\,M_k = \varphi p$. Then we have $\forall g \in G$:
\begin{equation*}
p\,M_k \;\overline{\rho_1(g)} = \varphi\,p \;\overline{\rho_1(g)} = \varphi\,\rho_1(g) \,p = \rho_0(g) \,\varphi p = \rho_0 (g) \,p \,M_k = p \;\overline{\rho_0(g)} \;M_k\,.
\end{equation*}

\n
Therefore there is $\alpha(g) \in \IZ$ such that
\begin{equation}\label{4.3}
M_k \;\overline{\rho_1(g)} = T^{\alpha(g)}  \;\overline{\rho_0(g)} \;M_k\,.
\end{equation}

\n
Evaluation at $0 \in \IR$ gives
\begin{equation*}
k \;\overline{\rho_1(g)}\;(0) = \alpha(g) + \;\overline{\rho_0(g)} \;(0)
\end{equation*}

\n
which first shows that $\alpha : G \r \IZ$ is a Borel function and furthermore implies that $\sup_{g \in G} \;|\alpha (g)| \le k + 1$.

\medskip
Applying (\ref{4.3}) to products and using the relation defining the Euler cocycle we get
\begin{equation*}
k \,c\big(\rho_1(g), \rho_1(h)\big) + \alpha (gh) - \alpha(g) - \alpha(h) = c\big(\rho_0 (g), \rho_0(h)\big)
\end{equation*}
which proves the lemma.
\end{proof}

Now we come to our first application.

\begin{corollary}\label{cor4.4}
Let $\mu \in \cM^1(G)$ be a spread out probability measure on $G$ and $(B,\nu_B)$ a standard Lebesgue $G$-space, where $\nu_B$ is $\mu$-stationary. Assume that $B$ is doubly ergodic and amenable. Given a minimal unbounded action $\rho : G \r \H(S^1)$ the following are equivalent:
\begin{itemize}
\item[{\rm (1)}] $\rho$ is strongly proximal.
\item[{\rm (2)}] There exists a $G$-equivariant measurable map $\varphi : B \r S^1$.
\item[{\rm (3)}] $\|\rho^*(e^b_\IR)\| = \frac{1}{2}$.
\end{itemize}
\end{corollary}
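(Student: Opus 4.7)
The plan is to establish the cycle $(1) \Rightarrow (2) \Rightarrow (3) \Rightarrow (1)$, with the last implication reusing the already-proven composition $(1) \Rightarrow (2) \Rightarrow (3)$ applied to the strongly proximal factor furnished by Theorem~\ref{theo3.5}.

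For $(1) \Rightarrow (2)$, I would use amenability of the $G$-action on $(B, \nu_B)$ to obtain a measurable $G$-equivariant map $\psi : B \to \cM^1(S^1)$, and then invoke Theorem~\ref{theo2.1} to upgrade $\psi$ to a map with values in Dirac masses. Its hypotheses hold here: $\rho$ is minimal and strongly proximal by assumption, and Proposition~\ref{prop2.2}(2) supplies the required pointwise-subsequence property for $\H(S^1)$, which transfers immediately to the sequences $(\rho(g_n))_{n \ge 1}$ arising from sequences $(g_n)$ in $G$.

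For $(2) \Rightarrow (3)$, I would feed the resulting map $\varphi : B \to S^1$ into Proposition~\ref{prop4.1}. The essential image of $\varphi$ is a non-empty closed $\rho(G)$-invariant subset of $S^1$ (using that $\nu_B$ is $G$-quasi-invariant, from its being a standard Lebesgue $G$-space); by minimality of $\rho$ it must equal $S^1$ and in particular contains more than three points, so Proposition~\ref{prop4.1} yields $\|\rho^*(e^b_\IR)\| = \tfrac{1}{2}$.

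For $(3) \Rightarrow (1)$, Theorem~\ref{theo3.5} produces $C_\rho := \cZ_{\cH^+}(\rho(G))$ as a finite cyclic group of some order $k \ge 1$, together with a degree-$k$ covering $\varphi : S^1 \to C_\rho \backslash S^1 \cong S^1$ intertwining $\rho$ with its minimal strongly proximal factor $\rho_{sp}$. Since $\rho_{sp}$ is minimal and strongly proximal on a circle it is automatically unbounded, so applying the already-established $(1) \Rightarrow (2) \Rightarrow (3)$ to $\rho_{sp}$ yields $\|\rho_{sp}^*(e^b_\IR)\| = \tfrac{1}{2}$. Lemma~\ref{lem4.3}, read in $\IR$-coefficients via the change-of-coefficient map $H^2_{bc}(G,\IZ) \to H^2_{bc}(G,\IR)$, gives $\rho_{sp}^*(e^b_\IR) = k \cdot \rho^*(e^b_\IR)$, hence $\tfrac{1}{2} = k \cdot \|\rho^*(e^b_\IR)\| = k/2$ by hypothesis, forcing $k = 1$. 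Thus $C_\rho$ is trivial and $\rho$ is strongly proximal by Corollary~\ref{cor3.6}. The point requiring most attention is to confirm that Theorem~\ref{theo3.5} and Corollary~\ref{cor3.6}, originally stated for homomorphisms from a discrete group $\Gamma$, apply verbatim to the continuous action of the locally compact $G$ (their proofs depend only on properties of the image subgroup in $\H(S^1)$) and that the induced quotient $\rho_{sp}$ inherits minimality and unboundedness from $\rho$.
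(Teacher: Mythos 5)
Your proposal is correct and follows essentially the same route as the paper: $(1)\Rightarrow(2)$ via amenability of $B$ plus Theorem~\ref{theo2.1} and Proposition~\ref{prop2.2}, $(2)\Rightarrow(3)$ via Proposition~\ref{prop4.1} (with the same minimality argument showing the essential image is all of $S^1$), and $(3)\Rightarrow(1)$ by applying the established implications to the strongly proximal factor and using Lemma~\ref{lem4.3} to force the covering degree $k=1$. The extra details you supply (why the initial map into $\cM^1(S^1)$ exists, and the applicability of Theorem~\ref{theo3.5} in the locally compact setting) are exactly what the paper leaves implicit.
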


\begin{proof}
(1) $\Longrightarrow$ (2): follows immediately from Theorem \ref{theo2.1} and Proposition \ref{prop2.2} (3). 

\medskip\n
(2) $\Longrightarrow$ (3): follows from Proposition \ref{prop4.1}.

\medskip\n
(3) $\Longrightarrow$ (1): Let $(\psi,\rho_0)$ be the strongly proximal factor of $\rho$. Applying the implication (1) $\Longrightarrow$ (3) to $\rho_0$ we get $\|\rho_0^*(e^b_\IR)\| = \frac{1}{2}$, and for $k = {\rm deg} \,\psi$, we get from Lemma \ref{lem4.3} that $\rho_0^*(e^b_\IR) = k \cdot \rho^*(e^b_\IR)$ which implies $k = 1$ and hence that $\rho$ is strongly proximal.
\end{proof}

\subsection{Minimal strongly proximal actions and cocycles}

The main result of this section is
\begin{theorem}\label{theo4.5}
Let $G$ be locally compact second countable and let $\mu \in \cM^1(G)$, $(B,\nu_B)$ be as in Corollary {\rm \ref{cor4.4}}. Then there is a bijection between

\medskip\n
{\rm 1)} The set of conjugacy classes of continuous $G$-actions on $S^1$ which are minimal and strongly proximal.

\medskip\n
{\rm 2)} The set of measurable functions, up to equality almost everywhere,
\begin{equation*}
\omega : B^3 \r \IR
\end{equation*}
such that 
\begin{itemize}
\item[{\rm (1)}] $\omega$ is an alternating strict cocycle.
\item[{\rm (2)}] $\omega(g x, gy, gz) = \omega(x,y,z)$ for every $g \in G$ and almost every $(x,y,z) \in B^3$.
\item[{\rm (3)}] $\omega$ takes values in $\{\pm 1\}$ almost everywhere.
\end{itemize}
\end{theorem}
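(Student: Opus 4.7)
The plan is to build the correspondence in both directions and verify they are mutually inverse, with the real work concentrated on constructing an action from a cocycle.

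For the direction (1) $\Rightarrow$ (2), given a minimal strongly proximal continuous action $\rho: G \r \H(S^1)$, Corollary \ref{cor4.4} supplies a measurable $G$-equivariant $\varphi: B \r S^1$, and I set
\[
\omega(x,y,z) := o\big(\varphi(x), \varphi(y), \varphi(z)\big).
\]
Alternation, the strict cocycle identity, and $G$-invariance are inherited from $o$ and the equivariance of $\varphi$. Condition (3) --- that $\omega \in \{\pm 1\}$ a.e. --- is equivalent to $\varphi_*\nu_B$ being non-atomic; since $\varphi_*\nu_B$ is a $\mu$-stationary measure on the minimal strongly proximal $G$-space $S^1$, its set of maximum-mass atoms is finite and $G$-invariant, hence empty. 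Well-definedness on conjugacy classes is clear because any conjugating $h \in \H(S^1)$ preserves $o$; uniqueness of $\varphi$ up to $\nu_B$-null sets follows from Theorem \ref{theo2.1} applied via Proposition \ref{prop2.2}(2), since two candidates $\varphi_1, \varphi_2$ would yield the $G$-equivariant map $\tfrac{1}{2}(\delta_{\varphi_1} + \delta_{\varphi_2}): B \r \cM^1(S^1)$, which must be Dirac-valued, forcing $\varphi_1 = \varphi_2$ a.e.

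For the direction (2) $\Rightarrow$ (1), I first read a $G$-invariant measurable cyclic order on $B$ off $\omega$ and then realize it. Fix a $\nu_B$-generic $a \in B$ and define $y_1 <_a y_2 :\iff \omega(a, y_1, y_2) = +1$. Antisymmetry follows from the alternating property; for transitivity, the cocycle identity
\[
\omega(a, y_1, y_3) \;=\; \omega(a, y_1, y_2) + \omega(a, y_2, y_3) - \omega(y_1, y_2, y_3),
\]
combined with all four values lying in $\{\pm 1\}$ a.e., forces that when the first two right-hand terms equal $+1$ the unique solution is $\omega(a, y_1, y_3) = \omega(y_1, y_2, y_3) = +1$. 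Thus $<_a$ is a.e. a total order. I then define $\varphi: B \r S^1 = \IR/\IZ$ by $\varphi(y) := \nu_B(\{z: z <_a y\}) \bmod 1$. Since $\nu_B$ is non-atomic (a consequence of double ergodicity), $\varphi_*\nu_B$ is Lebesgue and $\varphi$ has dense essential image.

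The main obstacle is transporting the $G$-action through $\varphi$, which is not itself $G$-equivariant because $a$ is fixed. $G$-invariance of $\omega$ translates to the statement that $g$ sends the $<_a$-order to the $<_{ga}$-order; using the explicit description of left-continuous quasi-conjugacies in (\ref{3.4})--(\ref{3.5}), the change of basepoint from $ga$ back to $a$ is implemented by a left-continuous orientation-preserving map $S^1 \r S^1$, which is in fact a homeomorphism because both distribution functions have dense image. This produces a measurable $\rho: G \r \H(S^1)$ with $\varphi(gx) = \rho(g)\varphi(x)$ a.e.; the cocycle identity promotes $\rho$ to a homomorphism, and automatic continuity of Borel homomorphisms into the Polish group $\H(S^1)$ yields continuity. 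With $\varphi$ now genuinely $G$-equivariant, Proposition \ref{prop4.1} identifies $\rho^*(e^b_\IR)$ with $-\tfrac{1}{2}\omega$, of norm $1/2$; the trichotomy (\ref{3.6}) combined with $\rho^*(e^b_\IR) \neq 0$ and full support of $\varphi_*\nu_B$ yields that $\rho$ is minimal unbounded, and Corollary \ref{cor4.4} then upgrades this to strongly proximal. Mutual inverseness is immediate from the uniqueness of $\varphi$ established in the first step.
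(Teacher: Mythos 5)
Your overall route is the paper's: one direction takes the orientation cocycle of the boundary map supplied by Corollary \ref{cor4.4}, and the other reads a measurable (based) linear order off $\omega$, forms the distribution function $\varphi(y)=\nu_B(\{z:\omega(a,z,y)=1\})$ for a generic basepoint $a$, and transports the $G$-action through it. Your transitivity computation is exactly the paper's Lemma \ref{lem4.7}, and two of your shortcuts are genuinely nice: deducing uniqueness of the boundary map by averaging two candidates and invoking Theorem \ref{theo2.1}, and using the stationary measure $\varphi_*\nu_B$ (maximal atoms, maximal-mass complementary intervals) to rule out finite orbits and exceptional minimal sets in place of the paper's double-ergodicity argument on $B^2$.

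Two assertions that carry the technical weight are, however, left unproved. First, ``since $\nu_B$ is non-atomic, $\varphi_*\nu_B$ is Lebesgue'' is not the classical probability integral transform: $(B,<_a)$ is an abstract measurable order, not $\IR$, so there is no intermediate value theorem making the distribution function surjective, and non-atomicity of $\nu_B$ is not the operative hypothesis. The claim is true, but it is precisely where the paper works hardest (Lemmas \ref{lem4.6}--\ref{lem4.12}: $0<\nu_B(I(x,y))<1$ a.e.\ via ergodicity on $B^2$, strict monotonicity of $y\mapsto\nu_B(I(a,y))$, non-atomicity of its pushforward, and then composition with the quasi-conjugacy $h_\xi$ to repair possibly non-full support); alternatively your stronger ``Lebesgue'' statement follows from the moment identity $\int \varphi^k\,d\nu_B=\nu_B^{k+1}\big(\{y\ \mbox{is the maximum}\}\big)=1/(k+1)$, valid because $<_a$ is a.e.\ a strict total order. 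As written, the step is an assertion. Second, the single sentence producing $\rho(g)$ compresses the paper's Lemma \ref{lem4.13} (the essential image of $x\mapsto(\varphi(x),\varphi(gx))$ is the graph of a homeomorphism): ``left-continuous with dense image, hence a homeomorphism'' is the right slogan, but injectivity also needs non-atomicity of the pushforward under $\varphi\circ g$, and well-definedness of the relating map rests on the identity $o\big(\varphi(x),\varphi(y),\varphi(z)\big)=\omega(x,y,z)$ a.e.\ (the paper's Lemma \ref{lem4.10}), which you never verify and which you also use tacitly when invoking Proposition \ref{prop4.1} to identify $\rho^*(e^b_\IR)$ with $-\frac{1}{2}\omega$ and when claiming mutual inverseness. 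None of these steps fails, but they are the content of the theorem and need arguments, not assertions.
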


This bijection is implemented by the map which to a continuous minimal strongly proximal action $\rho: G \r \H(S^1)$ associates the cocycle
\begin{equation*}
\omega(x,y,z) = o\big(\varphi(x),\varphi(y),\varphi(z)\big)
\end{equation*}

\n
where $\varphi : B \r S^1$ is the map given by Corollary \ref{cor4.4}.

\medskip
The proof of the Theorem is divided in two steps. Fix $\omega : B^3 \r \IR$ satisfying the properties (1), (2), (3) above.

\bigskip\n
{\it \textus{Step 1:}} We show that there exists a measurable map $\varphi: B \r S^1$ satisfying the following properties:

\begin{itemize}
\item[(a)] $\varphi_*(\nu_B) \in \cM^1(S^1)$ has no atoms. 
\item[(b)] the essential image ${\rm Ess \,Im}\; \varphi$ of $\varphi$ equals $S^1$.
\item[(c)] $o\big(\varphi(x), \varphi(y), \varphi(z)\big) = \omega(x,y,z)$ for almost every $(x,y,z) \in B^3$.
\end{itemize}

\medskip\n
{\it \textus{Step 2:}} Given a measurable function $\varphi : B \r S^1$ satisfying (a), (b), (c) we construct a continuous homomorphism
\begin{equation*}
\pi_\varphi : G \r \H(S^1)
\end{equation*}

\n
such that $\varphi(gx) = \pi_\varphi(g) \big(\varphi (x)\big)$, for every $g \in G$ and almost every $x \in B$, and conclude that $\pi_\varphi$ is minimal, strongly proximal. Finally we show that given $\varphi,\psi$ satisfying (a), (b), (c) then $\pi_\varphi$ and $\pi_\psi$ are conjugate.

\medskip
Combining Corollary \ref{cor4.4}, Step 1 and Step 2 clearly completes the proof of Theorem \ref{theo4.5}.

\subsection{Step 1: the construction of the measurable map}

We fix once and for all a measurable map $\omega : B^3 \r \IR$ satisfying properties (1), (2), (3) in Theorem \ref{theo4.5}. For every $(x,y) \in B^2$ define
\begin{equation*}
I(x,y) = \{z \in B: \omega (x,z,y) = 1\}
\end{equation*}

\n
which is measurable. The next few lemmas are intended to show that the sets $I(x,y)$ behave like intervals on $S^1$. We will use the notation $E \equiv F$ to indicate that two sets $E,F$ differ by a set of measure zero; similarly $E \underset{\bullet}{\subset} F$ means that $E$ is contained \linebreak

\vspace{-3ex} \noindent in $F$ up to a set of measure zero.

\begin{lemma}\label{lem4.6} ~

\begin{itemize}
\item[{\rm (1)}] $I(x,y) \cup I(y,x) \equiv B$ for a.e. $(x,y) \in B^2$.
\item[{\rm (2)}] $I(x,y) \cap I(y,x) = \phi, \quad \forall (x,y) \in B^2$.
\item[{\rm (3)}] $\forall g \in G$ and a.e. $(x,y) \in B^2$, $g\big(I(x,y)\big) \equiv I(gx,gy)$.
\item[{\rm (4)}] For a.e. $(x,y) \in B^2$, $0 < \nu_B\big(I(x,y)\big) < 1$.
\end{itemize}
\end{lemma}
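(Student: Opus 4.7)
My plan is to handle the four claims in the order (2), (1), (3), (4), since the first three are essentially formal manipulations of the defining properties of $\omega$, while (4) is the substantive assertion requiring both the strict cocycle identity and the double ergodicity of $(B, \nu_B)$, and is where the real work lies.

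For (2), the alternating property applied to the transposition exchanging positions $1$ and $3$ gives $\omega(y,z,x) = -\omega(x,z,y)$, so no $z$ can belong to both $I(x,y)$ and $I(y,x)$. For (1), since $\omega$ is $\{\pm 1\}$-valued off a null subset of $B^3$, Fubini shows that for a.e.\ $(x,y)$ one has $\omega(x,z,y) \in \{\pm 1\}$ for a.e.\ $z$; combined with (2) this partitions $B$ up to null sets into $I(x,y)$ and $I(y,x)$. For (3), fix $g \in G$: the $G$-invariance $\omega(gx,gz,gy) = \omega(x,z,y)$ a.e., combined with Fubini, gives for a.e.\ $(x,y)$ that $z \in I(x,y)$ iff $gz \in I(gx,gy)$ for a.e.\ $z$, so $g \cdot I(x,y) \equiv I(gx, gy)$.

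The main difficulty is (4), and the key step will be to show that the set $A := \{(x,y) : \nu_B(I(x,y)) = 0\}$ is null. Suppose for contradiction that $(\nu_B \otimes \nu_B)(A) > 0$. For $(x,y) \in A$ one has $\omega(x, z, y) = -1$ for a.e.\ $z$, hence by the alternating property $\omega(z', x, y) = 1$ for a.e.\ $z'$. I would then specialise the strict cocycle identity at $(x_0, x_1, x_2, x_3) = (z_1, z_2, x, y)$ to obtain
\begin{equation*}
\omega(z_2, x, y) - \omega(z_1, x, y) + \omega(z_1, z_2, y) - \omega(z_1, z_2, x) = 0,
\end{equation*}
and conclude, via Fubini, that $\omega(z_1, z_2, x) = \omega(z_1, z_2, y)$ for a.e.\ $(z_1, z_2) \in B^2$, for a.e.\ $(x,y) \in A$.

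Consequently the set $\Sigma := \{(x,y) : \omega(\cdot, \cdot, x) \equiv \omega(\cdot, \cdot, y) \text{ on } B^2\}$ contains a conull portion of the positive-measure set $A$ and, by property (3) of Theorem \ref{theo4.5}, is $G$-invariant. Double ergodicity of the $G$-action on $B \times B$ then forces $\Sigma$ to be conull, so $\omega$ is essentially independent of its third argument; combined with the alternating identity $\omega(x,y,z) = -\omega(x,z,y)$ this forces $\omega \equiv 0$ a.e., contradicting the $\{\pm 1\}$-valuedness of $\omega$. Hence $A$ is null, and applying the same conclusion to the swapped pair $(y,x)$ together with (1) and (2) (which give $\nu_B(I(x,y)) + \nu_B(I(y,x)) = 1$ a.e.) yields $\nu_B(I(x,y)) < 1$ a.e.\ as well. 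The bulk of the argument, and the only place where the cocycle identity and double ergodicity are genuinely invoked, lies in this last step.
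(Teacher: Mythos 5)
Your treatment of (1)--(3) matches the paper's (which simply asserts them), so the real point of comparison is (4), where you take a genuinely different route. The paper's argument is purely symmetry-based: with $A_0=\{(x,y):\nu_B(I(x,y))=0\}$ and $A_1=\{(x,y):\nu_B(I(x,y))=1\}$, part (1) shows that the measure-preserving flip $(x,y)\mapsto(y,x)$ carries $A_0$ onto $A_1$ up to null sets, so $\nu_B^2(A_0)=\nu_B^2(A_1)$; since the two sets are disjoint and $G$-invariant by (3), ergodicity of $G$ on $B^2$ forces both to be null, and the cocycle identity is never used. You instead attack $A_0$ head-on: the strict cocycle identity applied to $(z_1,z_2,x,y)$ shows that $\nu_B^2(A_0)>0$ would make $\omega(\cdot,\cdot,x)\equiv\omega(\cdot,\cdot,y)$ on a $G$-invariant set of positive measure, hence (again by ergodicity on $B^2$) for a.e.\ $(x,y)$, so that $\omega$ is essentially independent of its last variable and therefore vanishes by alternation, contradicting its $\{\pm 1\}$-valuedness; you then dispose of $A_1$ by the flip. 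Both arguments are correct and consume exactly the same ergodicity input; the paper's is shorter, while yours extracts from the cocycle identity a rigidity statement (``$\omega(\cdot,\cdot,x)$ determines $x$'') that anticipates the role of the maps $f_a$ in Lemma \ref{lem4.9}. One small slip: the $G$-invariance of your set $\Sigma$ comes from hypothesis (2) of Theorem \ref{theo4.5}, not hypothesis (3).
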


\begin{proof}
(1) follows from the fact that $\omega$ is alternating and $\omega(x,y,z) \in \{\pm 1\}$ for a.e. $(x,y,z) \in B^3$.

\bigskip\n
(2) is obvious.

\bigskip\n
(3) follows from the $G$-invariance of $\omega$ and the fact that the $G$-action on $B$ preserves the measure class of $\nu_B$.

\bigskip\n
(4) Consider

\vspace{-4ex}
\begin{align*}
A_1 & = \big\{(x,y) \in B^2: \;\nu_B \big(I(x,y)\big) = 1\big\}
\\[1ex]
A_0 & = \big\{(x,y) \in B^2: \;\nu_B \big(I(x,y)\big) = 0\} \,.
\end{align*}

\n
Denoting $\wedge$ the map $(x,y) \r (y,x)$, we have from (1) that $A_1 \equiv \widehat{A}_0$; in addition we have that $A_1 \cap A_0 = \phi$ and both sets are measurable and $G$-invariant by (3). Thus if $\nu^2_B(A_1) > 0$ then $\nu_B^2(A_0) = \nu^2_B(\widehat{A}_0) = \nu_B^2(A_1) > 0$ which contradicts the ergodicity of the $G$-action on $B^2$. Thus $A_1$ and $A_0$ are both of measure zero.
\end{proof}

\begin{lemma}\label{lem4.7}  {\rm 1)} For $(a,c) \in B^2$ and $b \in I(a,c)$ we have
\begin{align}
I(a,c)& \supset I(a,b) \cup I(b,c)\label{4.4}
\\[1ex]
I(a,b) & \cap I(b,c) = \phi \,.\label{4.5}
\end{align}

\n
{\rm 2)} For a.e. $(a,c) \in B^2$ and a.e. $b \in I(a,c)$
\begin{equation}\label{4.6}
I(a,c) \underset{\bullet}{\subset} I(a,b) \cup I(b,c) \,.
\end{equation}

\n
In particular, for a.e. $(a,c) \in B^2$ and a.e. $b \in I(a,c)$
\begin{equation}\label{4.7}
I(a,c)  \equiv  I(a,b) \cup I(b,c)\,.
\end{equation}
\end{lemma}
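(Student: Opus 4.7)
The proof is essentially an exercise in the cocycle identity, which for a strict $3$-cocycle $\omega$ on $B^4$ reads
\begin{equation*}
\omega(x_1,x_2,x_3) - \omega(x_0,x_2,x_3) + \omega(x_0,x_1,x_3) - \omega(x_0,x_1,x_2) = 0
\end{equation*}
for all $x_0,\dots,x_3$. Applied to the quadruple $(a,z,b,c)$ this becomes
\begin{equation*}
\omega(a,z,c) \;=\; \omega(a,z,b) + \omega(a,b,c) - \omega(z,b,c),
\end{equation*}
and the combination of this identity with the $\{\pm 1\}$-valuedness of $\omega$ is rigid enough to force each of the set-theoretic statements.

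For Part 1, take $b\in I(a,c)$ so that $\omega(a,b,c)=1$. If $z\in I(a,b)$, then $\omega(a,z,b)=1$ and the identity gives $\omega(a,z,c) = 2-\omega(z,b,c)$; since all values lie in $\{\pm 1\}$ this forces $\omega(z,b,c)=1$ and hence $\omega(a,z,c)=1$, i.e.\ $z\in I(a,c)$. The case $z\in I(b,c)$ is symmetric, applying the cocycle relation to $(a,b,z,c)$ instead of $(a,z,b,c)$. This gives (\ref{4.4}). For the disjointness (\ref{4.5}), suppose $z\in I(a,b)\cap I(b,c)$; then $\omega(a,z,b)=1$ and, by alternation, $\omega(z,b,c)=-\omega(b,z,c)=-1$, so the same identity yields $\omega(a,z,c) = 1 + \omega(a,b,c) + 1 = 2 + \omega(a,b,c)$, which forces $\omega(a,b,c)=-1$ and therefore contradicts $b\in I(a,c)$.

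For Part 2 the same identity is used, but now in its cleanest form: if $b\in I(a,c)$ and $z\in I(a,c)$, then $\omega(a,b,c)=\omega(a,z,c)=1$, and the cocycle relation collapses to the dichotomy
\begin{equation*}
\omega(a,z,b) \;=\; \omega(z,b,c).
\end{equation*}
Either both equal $+1$, in which case $z\in I(a,b)$, or both equal $-1$; by alternation the latter reads $\omega(b,z,c)=1$, i.e.\ $z\in I(b,c)$. In either case $z\in I(a,b)\cup I(b,c)$, yielding (\ref{4.6}). Combining (\ref{4.4}) with (\ref{4.6}) gives the equivalence (\ref{4.7}).

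The only genuine subtlety is the bookkeeping of null sets: the argument above is purely algebraic at each $(a,b,c,z)$ for which the strict cocycle relation holds and each of the four values $\omega(a,b,c)$, $\omega(a,z,b)$, $\omega(a,z,c)$, $\omega(z,b,c)$ lies in $\{\pm 1\}$. For Part 2 I would invoke Fubini on $B^4$: the set where these conditions all fail is $\nu_B^4$-null, so for $\nu_B^2$-a.e.\ $(a,c)$ the set of $(b,z)\in I(a,c)\times I(a,c)$ violating the algebraic argument is $\nu_B^2$-null, which is precisely the ``a.e.'' form stated in (\ref{4.6}) and (\ref{4.7}). For Part 1 the same Fubini argument shows that after fixing a suitable version of $\omega$ on a null set, the statements hold on the full-measure set where $\omega$ is strictly $\{\pm 1\}$-valued, which is the sense in which these containments should be read.
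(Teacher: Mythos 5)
Your proof is correct and follows essentially the same route as the paper: apply the strict cocycle identity to the quadruple $(a,z,b,c)$ and exploit the $\{\pm 1\}$-valuedness to force each containment, with (\ref{4.6}) coming from the collapsed identity $\omega(a,z,b)=\omega(z,b,c)$ and the final dichotomy. Your extra care about where the cocycle identity and the $\{\pm 1\}$-values hold (via Fubini) is a reasonable way to handle the null-set bookkeeping the paper leaves implicit.
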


\begin{proof}
Applying the cocycle identity to $a,x,b,c \in B$ we have,
\begin{equation}\label{4.8}
\omega(x,b,c) - \omega(a,b,c) + \omega (a,x,c) - \omega (a,x,b) = 0\,.
\end{equation}

\n
If $b \in I(a,c)$ and $x \in I(a,b)$ we have $\omega(a,b,c) = 1$, $\omega(a,x,b) = 1$ which with (\ref{4.8}) implies $\o(x,b,c) + \o(a,x,c) = 2$ and hence $x \in I(a,c)$. If $x \in I(b,c)$ then (\ref{4.8}) gives $\o(a,x,c) - \o(a,x,b) = 2$ and hence $x \in I(a,c)$. This shows the inclusion (\ref{4.4}). Concerning (\ref{4.5}), we apply (\ref{4.8}) to $x \in I(a,b) \cap I(b,c)$ to get a contradiction.

\medskip
Concerning (\ref{4.6}), assume that $b \in I(a,c)$ and $x \in I(a,c)$ and apply (\ref{4.8}) to get
\begin{equation}\label{4.9}
\o(x,b,c) = \o(a,x,b)\,.
\end{equation}

\n
For a.e. $(a,b) \in B^2$ and a.e. $x \in B$ we have either $\o(a,x,b) = 1$ and hence $x \in I(a,b)$ or $\o(a,x,b) = -1$ which with (\ref{4.9}) gives $x \in I(b,c)$.
\end{proof}

Lemma \ref{lem4.6} (4) and Lemma \ref{lem4.7} imply then immediately

\begin{lemma}\label{lem4.8}
For a.e. $(a,x,y) \in B^3$ we have the following dichotomy:

\medskip\n
{\rm 1)} $x \in I(a,y)$, $I(a,y) \equiv I(a,x) \cup I(x,y)$ and $\nu_B\big(I(a,x)\big) < \nu_B\big((a,y)\big)$.

\medskip\n
{\rm 2)} $y \in I(a,x)$, $I(a,x) \equiv I(a,y) \cup I(y,x)$ and $\nu_B\big(I(a,y)\big) < \nu_B\big((a,x)\big)$.
\end{lemma}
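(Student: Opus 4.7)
The plan is to obtain Lemma 4.8 as an almost immediate consequence of Lemmas 4.6 and 4.7, the only new input being the $\{\pm 1\}$-valuedness of $\omega$ (property (3)). There is really no hard step; the work is entirely bookkeeping on which null sets to exclude.

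First I would establish the dichotomy itself. Since $\omega$ is alternating and (by property (3)) takes values in $\{\pm 1\}$ a.e., for a.e. $(a,x,y) \in B^3$ exactly one of $\omega(a,x,y) = 1$ and $\omega(a,y,x) = 1$ holds. By the definition $I(p,q) = \{z : \omega(p,z,q) = 1\}$, these two alternatives are precisely $x \in I(a,y)$ and $y \in I(a,x)$ respectively, so the dichotomy of the statement holds on a conull subset of $B^3$.

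Second, in Case 1 I apply Lemma 4.7 with $(a,c) := (a,y)$ and $b := x$. Equation (\ref{4.7}) yields
\begin{equation*}
I(a,y) \equiv I(a,x) \cup I(x,y),
\end{equation*}
and (\ref{4.5}) gives the honest disjointness $I(a,x) \cap I(x,y) = \emptyset$. Consequently
\begin{equation*}
\nu_B\bigl(I(a,y)\bigr) \;=\; \nu_B\bigl(I(a,x)\bigr) + \nu_B\bigl(I(x,y)\bigr),
\end{equation*}
and by Lemma 4.6(4) the second summand is strictly positive for a.e.\ $(x,y)$, forcing $\nu_B(I(a,x)) < \nu_B(I(a,y))$. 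Case 2 is obtained by exchanging the roles of $x$ and $y$.

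The only thing to check is that the various "almost every" qualifiers coming from Lemma 4.6(4), Lemma 4.7(2) (which is a condition on $(a,c)$ and on a.e.\ $b \in I(a,c)$) and property (3) are compatible. This is routine: each exceptional set is either a null subset of $B^2$ or, via the fiber condition in Lemma 4.7(2), a null subset of $B^3$, so by Fubini their union is null in $B^3$ and the complement is the required conull set on which the entire statement of Lemma 4.8 holds. I do not expect any genuine obstacle in this proof—the substantive work has already been done in Lemma 4.7.
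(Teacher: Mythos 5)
Your proof is correct and is essentially the paper's own argument: the paper simply states that Lemma \ref{lem4.8} follows immediately from Lemma \ref{lem4.6} (4) and Lemma \ref{lem4.7}, and your write-up (dichotomy from the alternating $\{\pm1\}$-valued cocycle, then (\ref{4.7}) plus the disjointness (\ref{4.5}) and positivity from Lemma \ref{lem4.6} (4) to get the strict measure inequality) is exactly that deduction made explicit.
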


Given $a \in B$, we define $f_a: B \r \IZ \backslash \IR$ by 
\begin{equation*}
f_a(x) : = \nu_B\big(I(a,x)\big) \; {\rm mod} \;\IZ \,.
\end{equation*}
Then we have:
\begin{lemma}\label{lem4.9}
For a.e. $a \in B$ we have:

\medskip\n
{\rm 1)} $E = \big\{(x,y) \in B^2: f_a(x) = f_a(y)\}$ is of measure zero.

\medskip\n
{\rm 2)} $(f_a)_* (\nu_B)$ has no atoms.
\end{lemma}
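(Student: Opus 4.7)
Both statements reduce easily to Lemma~\ref{lem4.8}. The point is that, by Lemma~\ref{lem4.6}~(4), $\nu_B(I(a,x)) \in (0,1)$ for a.e.\ $(a,x) \in B^2$, so the map $x \mapsto f_a(x)$ is, off a null set, just the composition of $x \mapsto \nu_B(I(a,x))$ with the inclusion $[0,1) \hookrightarrow \IR/\IZ$. In particular, for a.e.\ $(a,x,y) \in B^3$, the equation $f_a(x) = f_a(y)$ is equivalent to the equation $\nu_B(I(a,x)) = \nu_B(I(a,y))$.

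For (1), I would invoke Lemma~\ref{lem4.8}, which asserts that for a.e.\ $(a,x,y) \in B^3$ a dichotomy holds in which $\nu_B(I(a,x))$ and $\nu_B(I(a,y))$ are strictly comparable; in particular they are distinct. Combined with the observation above, this shows that
\begin{equation*}
\big\{(a,x,y) \in B^3 : f_a(x) = f_a(y)\big\}
\end{equation*}
has $\nu_B^{\otimes 3}$-measure zero. An application of Fubini then gives that for $\nu_B$-a.e.\ $a$ the fiber $E = \{(x,y) \in B^2 : f_a(x) = f_a(y)\}$ has measure zero in $B^2$, which is statement (1).

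For (2), I would apply the elementary identity
\begin{equation*}
\nu_B^{\otimes 2}(E) \;=\; \int_{\IR/\IZ} (f_a)_*\nu_B\big(\{t\}\big) \, d\big((f_a)_*\nu_B\big)(t) \;=\; \sum_{t \in \IR/\IZ} \big((f_a)_*\nu_B(\{t\})\big)^2,
\end{equation*}
which expresses the measure of the coincidence set $E$ as the sum of squares of the atomic masses of $(f_a)_*\nu_B$. By (1) the left-hand side vanishes for a.e.\ $a$, so every atom has mass zero, i.e., $(f_a)_*\nu_B$ has no atoms.

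There is no real obstacle here: the whole content is packaged into Lemma~\ref{lem4.8}, and the present lemma is a Fubini argument on top of it. The only mild point to watch is that Lemma~\ref{lem4.8} is stated in the form ``for a.e.\ triple in $B^3$'', and one has to invoke Fubini to convert this into an almost everywhere statement for $a \in B$ about a null set in $B^2$; the ergodicity/invariance hypotheses play no further role.
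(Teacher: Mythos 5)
Your proposal is correct and follows essentially the same route as the paper: part (1) is exactly the paper's argument (the coincidence $f_a(x)=f_a(y)$ forces either the degenerate case excluded by Lemma \ref{lem4.6}~(4) or the equality of measures excluded by the strict inequalities in Lemma \ref{lem4.8}, plus Fubini), and part (2), which the paper dismisses with ``follows from 1)'', is precisely the sum-of-squares-of-atoms identity you spell out.
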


\begin{proof}
1) If $(x,y) \in E$ then either, $\nu_B\big(I(a,x)\big) = 0$ and $\nu_B\big(I(a,y)\big) = 1$, or $\nu_B\big(I(a,x)\big) = \nu_B\big((a,y)\big)$; in both cases these equalities hold only for a set of $(a,x,y)$'s of measure zero; in the first case this follows from Lemma \ref{lem4.6} (4) and in the second from Lemma \ref{lem4.8}.

\medskip\n
2)  follows from 1).
\end{proof}

\begin{lemma}\label{lem4.10}
For a.e. $a \in B$ and a.e. $(x,y,z) \in B^3$
\begin{equation*}
o\big(f_a(x), f_a(y), f_a(z)\big) = \omega(x,y,z)\,.
\end{equation*}
\end{lemma}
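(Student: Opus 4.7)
The strategy is to reduce the cyclic orientation of $(f_a(x), f_a(y), f_a(z))$ to three pairwise comparisons, each controlled by $\omega(a, \cdot, \cdot)$ via Lemma \ref{lem4.8}, and then eliminate the auxiliary basepoint $a$ by means of the cocycle identity for $\omega$.

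First I would record two preliminary observations. Since $\omega$ is alternating, $I(a,a)$ has measure zero, hence $f_a(a) = 0$ in $\IZ \backslash \IR$. By Lemma \ref{lem4.6} (4), for a.e.\ $v$ one has $\nu_B(I(a,v)) \in (0,1)$, so $f_a(v)$ is canonically represented by a point in the open interval $(0,1) \subset \IR$. Combined with Lemma \ref{lem4.9} (2), this implies that for a.e.\ $(a,x,y,z) \in B^4$ the three lifts $f_a(x), f_a(y), f_a(z) \in (0,1)$ are pairwise distinct. Next I would verify the elementary pointwise identity
\begin{equation*}
o(\alpha,\beta,\gamma) = \mathrm{sgn}(\gamma - \beta) - \mathrm{sgn}(\gamma - \alpha) + \mathrm{sgn}(\beta - \alpha)
\end{equation*}
for every triple of pairwise distinct reals $(\alpha,\beta,\gamma) \in (0,1)^3$, viewed as points of $\IZ \backslash \IR$. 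This is a finite case-check over the six linear orderings of $\{\alpha,\beta,\gamma\}$: in each case the right-hand side takes the value $\pm 1$ with the correct cyclic sign.

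The main dictionary between $\omega$ and $f_a$ is then
\begin{equation*}
\omega(a,u,v) = \mathrm{sgn}\big(f_a(v) - f_a(u)\big) \qquad \mbox{for a.e.\ } (a,u,v) \in B^3 .
\end{equation*}
Indeed, $\omega(a,u,v) = 1$ means $u \in I(a,v)$, and Lemma \ref{lem4.8} then forces $\nu_B(I(a,u)) < \nu_B(I(a,v))$, i.e.\ $f_a(u) < f_a(v)$; the case $\omega(a,u,v) = -1$ follows by the alternating property of $\omega$.

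Assembling the pieces, for a.e.\ $(a,x,y,z) \in B^4$ we get
\begin{equation*}
\begin{split}
o\big(f_a(x),f_a(y),f_a(z)\big) &= \mathrm{sgn}\big(f_a(z) - f_a(y)\big) - \mathrm{sgn}\big(f_a(z) - f_a(x)\big) + \mathrm{sgn}\big(f_a(y) - f_a(x)\big) \\
&= \omega(a,y,z) - \omega(a,x,z) + \omega(a,x,y) \\
&= \omega(x,y,z),
\end{split}
\end{equation*}
where the last equality is the cocycle identity (\ref{4.8}) applied to the $4$-tuple $(a,x,y,z)$. A Fubini argument then gives the statement of the lemma. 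I expect the main (mild) difficulty to be bookkeeping null sets: the statements ``$\omega$ is $\pm 1$'', ``$\omega$ is alternating'', and ``$f_a(\cdot) \in (0,1)$'' all hold only almost everywhere, so one must intersect a handful of null sets in $B$, $B^2$, $B^3$ and then use Fubini to arrive at the clean ``for a.e.\ $a$, for a.e.\ $(x,y,z)$'' formulation.
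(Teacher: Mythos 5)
Your proof is correct and follows essentially the same route as the paper: both arguments rest on the dictionary from Lemma \ref{lem4.8} identifying the order of the values $\nu_B\big(I(a,\cdot)\big)$ with the relation $u \in I(a,v)$, i.e.\ with $\omega(a,u,v)$, and then eliminate the basepoint $a$ via the cocycle identity (\ref{4.8}) applied to $(a,x,y,z)$. The only (cosmetic) difference is that you handle both signs at once through the explicit $\mathrm{sgn}$-decomposition of $o$ on lifts to $(0,1)$, whereas the paper first observes that the left-hand side is an alternating $\{\pm 1\}$-valued cocycle and reduces to checking the positively oriented case.
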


\begin{proof}
Observe that the left hand side gives a measurable alternating cocycle taking values in $\{-1,0,1\}$; it follows from Lemma \ref{lem4.9} that it takes values in $\{-1,1\}$ almost everywhere. It therefore suffices to show that if the left-hand side equals $1$ then so does the right-hand side.

\medskip
If $o\big(f_a(x), f_a(y), f_a(z)\big) = 1$ then up to cyclically permuting the variables we may assume, using the definition of $f_a$, that $\nu_B\big(I(a,x)\big) < \nu_B\big(I(a,y)\big) < \nu_B\big(I(a,z)\big)$. Together with Lemma \ref{lem4.8} this implies $x \in I(a,y)$ and $y \in I(a,z)$ except possibly for a set of $(a,x,y,z)$'s of measure zero. Using the cocycle identity for $\o$ applied to $(a,x,y,z)$ we get $\o(x,y,z) + \o(a,x,z) = 2$ and hence $\o(x,y,z) = 1$.
\end{proof}

The following lemma will be left to the reader:

\begin{lemma}\label{lem4.11}
Let $\xi \in \cM^1(S^1)$ be a probability measure without atoms and
\begin{align*}
h_\xi : &\; S^1 \r S^1
\\[1ex]
& \;x \longmapsto \xi\big([\dot{o},x)\big)\; {\rm mod} \;\IZ
\end{align*}
the associated quasiconjugacy. Then,

\begin{itemize}
\item[{\rm 1)}] $h_\xi$ is continuous.
\item[{\rm 2)}] The measure $(h_\xi)_* (\xi)$ has no atoms and its support equals $S^1$.
\item[{\rm 3)}] For $\xi^3$-almost every $(x,y,z) \in (S^1)^3$ we have $o\big(h_\xi(x),h_\xi(y), h_\xi(z)\big) = o (x,y,z)$.
\end{itemize}
\end{lemma}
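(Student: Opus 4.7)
The plan is to treat the three assertions in turn, with the unifying observation that $h_\xi$ is essentially the cumulative distribution function of $\xi$ viewed modulo $\IZ$. I would work with the lift $\ov{h}_\xi : \IR \r \IR$, defined by $\ov{h}_\xi(x) := \xi([\dot o, x))$ on a fundamental domain and extended by $\ov{h}_\xi(x+1) = \ov{h}_\xi(x) + 1$. Continuity in (1) reduces to showing $\xi([\dot o, x_n)) \r \xi([\dot o, x))$ whenever $x_n \r x$; since $[\dot o, x_n) \triangle [\dot o, x)$ is contained in arbitrarily small intervals around $x$ and $\xi$ has no atoms, outer regularity of $\xi$ yields the claim.

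For assertion (2), the central point is that $(h_\xi)_*\xi$ coincides with the Lebesgue probability measure $\lambda$ on $S^1$. It suffices to verify this on half-open arcs $[a,b) \subset S^1$: by continuity of $h_\xi$ and monotonicity of $\ov{h}_\xi$, there exist $x_a, x_b \in S^1$ with $h_\xi(x_a) = a$, $h_\xi(x_b) = b$, and the preimage $h_\xi^{-1}([a,b))$ coincides with $[x_a, x_b)$ up to a $\xi$-null set. Directly from the definition of $h_\xi$ one then has $\xi([x_a, x_b)) = h_\xi(x_b) - h_\xi(x_a) = b - a$. Since $\lambda$ is atomless and has full support on $S^1$, both conclusions in (2) follow immediately.

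For assertion (3), monotonicity of $\ov{h}_\xi$ in the cyclic order shows that whenever the triples $(x,y,z)$ and $\bigl(h_\xi(x), h_\xi(y), h_\xi(z)\bigr)$ are each pairwise distinct, their cyclic orientations agree. Thus it suffices to check that the set of triples where some two image points coincide is $\xi^3$-null. For each fixed $t \in S^1$, the fiber $h_\xi^{-1}(\{t\})$ is a (possibly degenerate) interval on which $h_\xi$ is constant; the integral definition of $h_\xi$ forces $\xi\bigl(h_\xi^{-1}(\{t\})\bigr) = 0$. Fubini applied to the diagonal-like set $\{(x,y) \in (S^1)^2 : h_\xi(x) = h_\xi(y)\}$ then yields a $\xi^2$-null set, and the analogous $\xi^3$-null statement for triples follows. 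Combined with atomlessness of $\xi$ (which eliminates coincidences among the $x,y,z$ themselves), one concludes that on a $\xi^3$-conull set both the original triple and its image are in strict cyclic order.

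The main technical point is this strictness argument for (3): one must combine atomlessness of $\xi$ with the observation that each fiber of $h_\xi$ is $\xi$-null to conclude that the orientation is preserved, not merely weakly preserved. Once this is in place, the remaining content of the lemma is a routine consequence of the integral representation of $h_\xi$ and standard change-of-variables reasoning.
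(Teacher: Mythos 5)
Your proposal is correct; note that the paper explicitly leaves this lemma to the reader, so there is no proof to compare against, and your argument (viewing $h_\xi$ as the cumulative distribution function of $\xi$, so that $(h_\xi)_*\xi$ is Lebesgue measure by the probability integral transform, with fibers of $h_\xi$ being $\xi$-null intervals) is the standard one the author surely intends. The key points — atomlessness giving continuity of the lift, surjectivity from monotonicity plus periodicity, and the Fubini argument showing coincidences of image points form a $\xi^2$-null set so that weak order preservation upgrades to equality of orientation cocycles almost everywhere — are all present and correctly handled.
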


Combining Lemma \ref{lem4.9}, \ref{lem4.10} and \ref{lem4.11} we obtain the following lemma which completes Step 1:

\begin{lemma}\label{lem4.12}
For a.e. $a \in B$, let $\xi = (f_a)_* (\nu_B)$ and define $\varphi_a : = h_\xi \circ f_a$. Then
\begin{itemize}
\item[{\rm (a)}] $(\varphi_a)_* (\nu_B)$ has no atoms.
\item[{\rm (b)}] ${\rm Ess\, Im} \;\varphi_a = S^1$.
\item[{\rm (c)}] $o\big(\varphi_a(x),\varphi_a(y),\varphi_a(z)\big) = \o(x,y,z)$ for a.e. $(x,y,z) \in B^3$.
\end{itemize}
\end{lemma}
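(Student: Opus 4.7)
The plan is to obtain Lemma \ref{lem4.12} as a direct assembly of Lemmas \ref{lem4.9}, \ref{lem4.10}, and \ref{lem4.11}, once one identifies the relevant pushforwards. Fix a measurable set $A \subset B$ of full measure on which all three preparatory lemmas apply, and fix $a \in A$; set $\xi := (f_a)_*(\nu_B)$. By Lemma \ref{lem4.9}(2), $\xi$ has no atoms, so Lemma \ref{lem4.11} applies to $\xi$ and yields a continuous map $h_\xi: S^1 \rightarrow S^1$ with the three listed properties.

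For (a), I would compute directly: $(\varphi_a)_*(\nu_B) = (h_\xi)_*(f_a)_*(\nu_B) = (h_\xi)_*(\xi)$, which is atomless by Lemma \ref{lem4.11}(2). For (b), I would use the standard fact that the essential image of a measurable map into a Polish space equals the support of the pushforward measure; since $(\varphi_a)_*(\nu_B) = (h_\xi)_*(\xi)$ has full support $S^1$ by Lemma \ref{lem4.11}(2), this gives ${\rm Ess\,Im}\,\varphi_a = S^1$.

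For (c), the key observation is that the map $F: B^3 \rightarrow (S^1)^3$, $(x,y,z) \mapsto (f_a(x), f_a(y), f_a(z))$, pushes $\nu_B^3$ forward to $\xi^3$ (this is just Fubini applied to the three independent coordinates). Lemma \ref{lem4.11}(3) provides a $\xi^3$-null set $N \subset (S^1)^3$ off which $o(h_\xi(u), h_\xi(v), h_\xi(w)) = o(u,v,w)$. Then $F^{-1}(N)$ is $\nu_B^3$-null, and off this set
\begin{equation*}
o\bigl(\varphi_a(x), \varphi_a(y), \varphi_a(z)\bigr) = o\bigl(f_a(x), f_a(y), f_a(z)\bigr),
\end{equation*}
which equals $\omega(x,y,z)$ on another $\nu_B^3$-conull set by Lemma \ref{lem4.10}. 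The union of the two excluded null sets is still $\nu_B^3$-null, so (c) holds almost everywhere.

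The only subtlety — and the one point where I would be careful — is bookkeeping of the various almost-everywhere quantifiers: specifically, showing that the $\xi^3$-null set appearing in Lemma \ref{lem4.11}(3) pulls back to a $\nu_B^3$-null set, which relies on the independence of the three $f_a$-factors, and checking that the full-measure set of admissible $a$ can be chosen so that Lemma \ref{lem4.9} and Lemma \ref{lem4.10} hold simultaneously. No additional ideas beyond what is already in place are needed.
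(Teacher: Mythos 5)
Your proposal is correct and is exactly the argument the paper intends: the paper gives no proof beyond the sentence ``Combining Lemma \ref{lem4.9}, \ref{lem4.10} and \ref{lem4.11} we obtain the following lemma,'' and your write-up simply supplies the routine pushforward identities and the bookkeeping of the almost-everywhere quantifiers (in particular that $(f_a\times f_a\times f_a)_*\nu_B^3=\xi^3$, so the $\xi^3$-null exceptional set of Lemma \ref{lem4.11}(3) pulls back to a $\nu_B^3$-null set). Nothing is missing.
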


\subsection{Step 2: the construction of the action}

The following result is the basis for our construction:

\begin{lemma}\label{lem4.13}
Let $\varphi,\psi: B \r S^1$ be measurable maps such that
\begin{itemize}
\item[{\rm 1)}] $\varphi_*(\nu_B)$ and $\psi_*(\nu_B)$ have no atoms.
\item[{\rm 2)}] ${\rm Ess\,Im} \;\varphi = {\rm Ess\, Im} \;\psi = S^1$.
\item[{\rm 3)}] $o\big(\varphi(x),\varphi(y),\varphi(z)\big) = o\big(\psi(x),\psi(y),\psi(z)\big)$
\end{itemize}
for a.e. $(x,y,z) \in B^3$.

\medskip
Then the essential image $F \subset S^1 \times S^1$ of the map
\begin{align*}
B& \r   S^1 \times S^1
\\[1ex]
 x & \longmapsto \big(\varphi(x), \psi(x)\big)
\end{align*}

\n
is the graph of an orientation preserving homeomorphism $h : S^1 \r S^1$ and, 
\begin{equation*}
\mbox{$h\big(\varphi(x)\big) = \psi(x)$ for a.e. $x \in B$}\,.
\end{equation*}
\end{lemma}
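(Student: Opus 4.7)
The plan is to work with the Borel probability measure $\eta := (\varphi \times \psi)_* \nu_B$ on $S^1 \times S^1$ and its compact support $F := \mathrm{supp}(\eta)$, which coincides with the essential image in the statement. Hypothesis (1) translates to $\eta(\{p\} \times S^1) = \eta(S^1 \times \{q\}) = 0$ for every $p, q \in S^1$, while hypothesis (2) gives $\pi_1(F) = \pi_2(F) = S^1$, since the support of the $i$-th marginal of $\eta$ equals $\pi_i(F)$.

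First I upgrade the almost-everywhere identity in (3) to a pointwise identity on a generic open subset of $F^3$. The open set $U \subset (S^1 \times S^1)^3$ of triples with pairwise distinct first and pairwise distinct second coordinates has full $\eta^{\otimes 3}$-measure by the no-atoms condition, and on $U$ both sides of (3) are locally constant integer-valued. Consequently the open subset $\{o(p) \neq o(q)\} \cap U$ has $\eta^{\otimes 3}$-measure zero, hence does not meet $F^3 = \mathrm{supp}(\eta^{\otimes 3})$. Thus for every triple of $F$-points with distinct first and distinct second coordinates the orientations agree.

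The heart of the argument is the injectivity of $\pi_1|_F$. Suppose for contradiction that $(a, b_1), (a, b_2) \in F$ with $b_1 \neq b_2$, and pick $d_+, d_-$ in the two open arcs of $S^1 \setminus \{b_1, b_2\}$, so that $o(b_1, b_2, d_+) = +1$ and $o(b_1, b_2, d_-) = -1$. Using $\pi_2(F) = S^1$ together with $\eta(\{a\} \times S^1) = 0$, I find $(c_{\pm}, d_\pm') \in F$ with $c_{\pm} \neq a$ and $d_\pm'$ close enough to $d_\pm$ that the orientation with respect to $b_1, b_2$ is preserved. Using $\eta(\{a\} \times S^1) = 0$ again, I approximate $(a, b_i)$ by sequences $(a_n^{(i)}, b_i^{n}) \in F$ with $a_n^{(i)} \to a$ and with $a_n^{(1)}, a_n^{(2)}, c_+, c_-$ pairwise distinct. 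The previous step, applied to the two triples $((a_n^{(1)}, b_1^n), (a_n^{(2)}, b_2^n), (c_\pm, d_\pm'))$, yields $o(a_n^{(1)}, a_n^{(2)}, c_+) = +1$ and $o(a_n^{(1)}, a_n^{(2)}, c_-) = -1$ for all large $n$. However, once $a_n^{(1)}, a_n^{(2)}$ are close enough to $a$ that the short arc between them excludes both $c_+$ and $c_-$, the orientation $o(a_n^{(1)}, a_n^{(2)}, c)$ depends only on the relative position of $a_n^{(1)}, a_n^{(2)}$ and is the same for $c = c_+$ and $c = c_-$; this forces $+1 = -1$, the desired contradiction.

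Injectivity together with surjectivity of $\pi_1|_F$ and compactness of $F$ produce a continuous map $h: S^1 \to S^1$ whose graph is $F$; since $(\varphi(x), \psi(x)) \in F$ for $\nu_B$-a.e. $x$, we get $h(\varphi(x)) = \psi(x)$ a.e. Reversing the roles of $\varphi$ and $\psi$ in the argument above shows $\pi_2|_F$ is injective as well, so $h$ is a homeomorphism, and orientation preservation of $h$ is immediate from the pointwise identity of Step~2 applied to any triple of distinct points of $\mathrm{graph}(h)$. The main technical obstacle is the circle-combinatorial contradiction in the injectivity step, which rests on the elementary but essential observation that for $a_n^{(1)}, a_n^{(2)}$ very close to $a$, the orientation $o(a_n^{(1)}, a_n^{(2)}, c)$ is determined only by the relative position of $a_n^{(1)}, a_n^{(2)}$ and not by any fixed $c$ lying away from $a$.
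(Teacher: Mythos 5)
Your proof is correct, and it follows the same two--step skeleton as the paper's: first upgrade hypothesis 3) to a pointwise statement for triples of points of $F$ with pairwise distinct first and pairwise distinct second coordinates (your support/local-constancy argument for $\eta^{\otimes 3}$ is just a repackaging of the paper's Claim 1, which picks disjoint open boxes $V_i\times W_i$ of positive measure), and then show that $F$ is a graph in both directions. Where you genuinely diverge is in the second step. The paper proves ``$\xi_1\ne\xi_2$ iff $\eta_1\ne\eta_2$'' by selecting measurable witnesses $t\in\varphi^{-1}((\xi_1,\xi_2))$ and $s\in\varphi^{-1}((\xi_2,\xi_1))$ and feeding the four points $\eta_1,\psi(t),\eta_2,\psi(s)$ into the cocycle identity for $o$, converting two of the four terms via Claim 1 to force $o(\eta_1,\eta_2,\psi(s))+o(\eta_1,\psi(t),\eta_2)=2$. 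You instead argue by contradiction with a topological separation argument: two points $(a,b_1),(a,b_2)\in F$ with $b_1\ne b_2$ would, after approximating within $F$ off the null vertical lines and invoking Claim 1, force nearby first coordinates $a_n^{(1)},a_n^{(2)}$ to assign opposite orientations to two fixed points $c_+,c_-$ lying away from $a$, which is impossible once $a_n^{(1)},a_n^{(2)}$ collapse into a small arc avoiding $c_\pm$. Both arguments are sound and of comparable length; yours makes the geometric obstruction (a vertical segment in $F$ cannot coexist with orientation agreement) more visible and avoids the cocycle identity for $o$, while the paper's stays closer to the algebraic cocycle manipulations used throughout Section 4 and handles both implications of the ``graph in both directions'' statement symmetrically in one computation. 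Your concluding steps (compact graph of a function on $S^1$ implies continuity, injectivity from the symmetric argument, orientation preservation from Claim 1) match the paper's.
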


\begin{proof}
The proof is in two steps.

\medskip\n
{\it \textus{Claim 1:}} Given $(\xi_1,\eta_1), (\xi_2,\eta_2), (\xi_3,\eta_3)$ in $F$ such that $\xi_1,\xi_2,\xi_3$ are pairwise distinct and the same holds for $\eta_1,\eta_2,\eta_3$, we have
\begin{equation*}
o(\xi_1,\xi_2,\xi_3) = o(\eta_1,\eta_2,\eta_3) \,.
\end{equation*}

\n
Pick $V_i \ni \xi_i$ and $W_i \ni \eta_i$ open intervals such that $V_1,V_2,V_3$ are pairwise disjoint and the same holds for $W_1,W_2,W_3$. Then
\begin{equation*}
B_i = \big\{x \in B: \varphi(x) \in V_i \;\mbox{and} \;\psi(x) \in W_i\big\}
\end{equation*}
is of positive measure, $i = 1,2,3$.

\medskip
We have then for a.e. $(x,y,z) \in B_1 \times B_2 \times B_3$, 
\begin{align*}
o(\xi_1,\xi_2,\xi_3) & = o\big(\varphi(x_1), \varphi(x_2), \varphi(x_3)\big) = o \big(\psi(x_1), \psi(x_2), \psi(x_3)\big)
\\[1ex]
& = o(\eta_1,\eta_2,\eta_3)\,.
\end{align*}

\medskip\n
{\it \textus{Claim 2:}} Let $(\xi_1,\eta_1)$, $(\xi_2,\eta_2)$ be in $F$. Then $\xi_1 \not= \xi_2$ iff $\eta_1 \not= \eta_2$.

\medskip
Assume that $\xi_1 \not= \xi_2$. We will repeatedly use the fact that
\begin{equation}\label{4.10}
\big(\varphi(x), \psi(x)\big) \in F \;\;\mbox{for a.e. $x \in B$} \,.
\end{equation}

\n
Since ${\rm Ess \, Im} \;\varphi = S^1$ and $\xi_1 \not= \xi_2$ the set $\varphi^{-1}\big((\xi_1,\xi_2)\big)$ is of positive measure; then in virtue of (\ref{4.10}) and the hypothesis that $\psi_*(\nu_B)$ has no atoms, we can pick $t \in \varphi^{-1}\big((\xi_1,\xi_2)\big)$ with $\big(\varphi(t),\psi(t)\big) \in F$ and $\psi(t) \notin \{\eta_1,\eta_2\}$. Similarly we can find $s \in \varphi^{-1}\big((\xi_2,\xi_1)\big)$ such that $\big(\varphi(s),\psi(s)\big) \in F$ and $\psi(s) \notin \{\psi(t),\eta_1,\eta_2\}$. Then the cocycle identity applied to $(\eta_1,\psi(t), \eta_2,\psi(s)\}$ gives:
\begin{equation*}
o\big(\psi(t), \eta_2,\psi(s)\big) - o\big(\eta_1,\eta_2,\psi(s)\big) + o\big(\eta_1,\psi(t),\psi(s)\big) - o(\eta_1,\psi(t),\eta_2) = 0\,.
\end{equation*}

\n
Applying Claim 1 to the first and third terms we get:
\begin{equation*}
o\big(\varphi(t), \xi_2,\varphi(s)\big) - o\big(\eta_1,\eta_2,\psi(s)\big) + o\big(\xi_1,\varphi(t),\varphi(s)\big) - o(\eta_1,\psi(t),\eta_2) = 0
\end{equation*}
and by our choices we have
\begin{align*}
o\big(\varphi(t), \xi_2, \,\varphi(s)\big) & = 1
\\[1ex]
o\big(\xi_1,\varphi(t),\varphi(s)\big) & = 1
\end{align*}
which implies that:
\begin{equation*}
o\big(\eta_1,\eta_2,\psi(s)\big) +o(\eta_1,\psi(t),\eta_2) = 2
\end{equation*}
and hence $\eta_1 \not= \eta_2$.

\medskip
This shows Claim 2 modulo interchanging the roles of $\varphi$ and $\psi$.

\medskip
Since $pr_i(F) = S^1$ we get from Claim 2 that $F$ is the graph of a homeomorphism $h$ which by Claim 1 is orientation preserving. Finally (\ref{4.10}) says precisely that $h\big(\varphi(x)\big) = \psi(x)$ for a.e. $x \in B$ and this completes the proof of the lemma.
\end{proof}

Now we return to our cocycle $\o: B^3 \r \IR$ satisfying the hypothesis of Theorem \ref{theo4.5} and let $\varphi: B \r S^1$ be a measurable map as given by Lemma \ref{lem4.12}. Since $\o$ is $G$-invariant we can apply Lemma \ref{lem4.13}  to $\varphi$ and $\varphi g$, $g \in G$, to obtain an orientation preserving homeomorphism denoted $\pi_\varphi(g) \in \H(S^1)$ satisfying 
\begin{equation*}
\pi_\varphi(g)\big(\varphi(x)\big) = \varphi(gx) \;\; \mbox{for a.e. $x \in B$} \,.
\end{equation*}
This equivariance property implies that
\begin{equation*}
\pi_\varphi : G \r \H(S^1)
\end{equation*}
is a homomorphism.

\begin{lemma}\label{4.14}
{\rm 1)} $\pi_\varphi: G \r \H(S^1)$ is continuous. It is minimal, unbounded and strongly proximal.

\medskip\n
{\rm 2)} Let $\varphi,\psi$ be as in Lemma {\rm \ref{lem4.13}}, and $\pi_\varphi,\pi_\psi$ the corresponding homomorphisms. Let $h: S^1 \r S^1$ be the homeomorphism given by Lemma {\rm \ref{lem4.13}}. Then,
\begin{equation*}
h\,\pi_\varphi(g) = \pi_\psi(g)\,h \quad \forall g \in G\,.
\end{equation*} 
\end{lemma}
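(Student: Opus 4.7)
For part (1), I first reduce continuity of $\pi_\varphi$ to Borel measurability: the prescription $g \mapsto \pi_\varphi(g)$ is produced by Lemma \ref{lem4.13} applied to the pair $\big(\varphi,\,x\mapsto\varphi(gx)\big)$, and the latter depends Borel measurably on $g$, whence $\pi_\varphi$ is a Borel homomorphism; a Banach--Pettis-type result on homomorphisms between Polish groups then gives continuity.

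For the dynamical statements I combine the trichotomy (\ref{3.6}) with properties (a), (b), (c) of $\varphi$ from Lemma \ref{lem4.12}. Proposition \ref{prop4.1} applied to $\varphi$ together with Lemma \ref{lem4.12}(c) gives $\|\pi_\varphi^*(e^b_\IR)\| = \frac{1}{2}$, so by Proposition \ref{prop3.2} the action is unbounded and not quasi-conjugated to rotations; in particular case 1 of the trichotomy is excluded. To rule out case 3 (a proper Cantor minimal set $K\subsetneq S^1$), I observe that $\varphi^{-1}(K)$ is essentially $G$-invariant in $B$ and hence of $\nu_B$-measure $0$ or $1$ by ergodicity. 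Measure one contradicts ${\rm Ess\,Im}\,\varphi = S^1$ from Lemma \ref{lem4.12}(b). Measure zero forces $\varphi_*(\nu_B)$ to sit on the countable union of components $I_n$ of $S^1\setminus K$; composing with the continuous quasi-conjugation $h_0:S^1\to S^1$ that collapses each $I_n$ yields a $G$-equivariant measurable map $h_0\circ\varphi:B\to S^1$ for the collapsed (minimal strongly proximal) action, whose pushforward has atoms, one per component of positive mass. This contradicts the fact that a minimal strongly proximal circle action admits no atomic $\mu$-stationary probability measure: the set of atoms is $G$-invariant; a finite orbit of equal-weight atoms normalizes to a $G$-invariant probability, forbidden by strong proximality combined with minimality; an infinite orbit of equal-weight atoms violates $\nu(S^1)=1$.

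For strong proximality of $\pi_\varphi$ itself, let $C = \cZ_{\cH^+}(\pi_\varphi(G))$, which by Theorem \ref{theo3.5} is finite cyclic of some order $k$, the quotient map $S^1\to C\backslash S^1$ of degree $k$ realizing the strongly proximal factor $(\pi_\varphi)_{sp}$. Lemma \ref{lem4.3} yields $(\pi_\varphi)_{sp}^*(e^b_\IR) = k\cdot\pi_\varphi^*(e^b_\IR)$, and taking norms, together with the universal bound $\|(\pi_\varphi)_{sp}^*(e^b_\IR)\| \le \frac{1}{2}$ and $\|\pi_\varphi^*(e^b_\IR)\| = \frac{1}{2}$, forces $k=1$. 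By Corollary \ref{cor3.6}, $\pi_\varphi$ is strongly proximal.

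For part (2), the identity is an immediate chain of equalities. For almost every $x\in B$,
\[ h\,\pi_\varphi(g)\,\varphi(x) = h\,\varphi(gx) = \psi(gx) = \pi_\psi(g)\,\psi(x) = \pi_\psi(g)\,h\,\varphi(x), \]
using successively the defining equivariance of $\pi_\varphi$, the relation $h\circ\varphi = \psi$ from Lemma \ref{lem4.13}, the defining equivariance of $\pi_\psi$, and $h\circ\varphi = \psi$ once more. Hence $h\,\pi_\varphi(g)$ and $\pi_\psi(g)\,h$ agree on the essential image of $\varphi$, which equals $S^1$ by Lemma \ref{lem4.12}(b); since both sides are continuous maps $S^1\to S^1$, they coincide everywhere. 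The main obstacle is the Cantor-case exclusion in (1), which rests on the short but nontrivial dynamical observation that minimal strongly proximal circle actions admit no atomic $\mu$-stationary measures.
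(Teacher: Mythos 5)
Your part (2) and the continuity, non-elementarity and strong-proximality arguments in part (1) are correct and essentially the paper's (the paper dismisses (2) as clear, obtains continuity from measurability of $g\mapsto\varphi(gx)$, and obtains strong proximality from Corollary \ref{cor4.4}, whose proof of (3)$\Rightarrow$(1) is exactly your norm-plus-Lemma \ref{lem4.3} computation). Where you diverge from the paper is in excluding an exceptional minimal set $K$, and that is also where your argument has a genuine soft spot. You reduce to the claim that the collapsed minimal action admits no atomic $\mu$-stationary probability measure, and you justify it by invoking ``a finite orbit of equal-weight atoms'' and ``an infinite orbit of equal-weight atoms''. But a $\mu$-stationary measure is in general only quasi-invariant, so two atoms in the same $G$-orbit need not carry equal mass, and the set of atoms of a prescribed mass need not be $G$-invariant; as written this step fails. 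The standard repair is the maximal-atom argument: $m=\sup_x \nu(\{x\})>0$ is attained on a finite set $F$, and stationarity $\nu(\{x\})=\int_G \nu(\{g^{-1}x\})\,d\mu(g)$ forces $g^{-1}F=F$ for $\mu$-a.e.\ $g$, so $F$ is invariant under the closed subgroup generated by $\mathrm{supp}\,\mu$. To contradict minimality of the collapsed action you then also need $\mathrm{supp}\,\mu$ to generate a dense subgroup of $G$, a hypothesis not recorded in Theorem \ref{theo4.5} (where $\mu$ is only assumed spread out). Incidentally, the collapsed action is minimal unbounded but not automatically strongly proximal, though minimality is all your atom argument actually needs.

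The paper avoids stationary measures entirely at this point and instead uses the double ergodicity of $B$, which is among the standing hypotheses: for $J$ a connected component of $S^1\setminus K$, the set $\varphi^{-1}(J)$ has positive but not full measure by Lemma \ref{lem4.12}(b); for each $g$ either $g\varphi^{-1}(J)\equiv\varphi^{-1}(J)$ or the two sets are essentially disjoint; ergodicity on $B$ produces a $g_0$ realizing the disjoint alternative, and then $\bigcup_{h\in G} h\bigl(\varphi^{-1}(J)\times\varphi^{-1}(J)\bigr)$ is a $G$-invariant subset of $B^2$ of positive but not full measure, contradicting double ergodicity. I recommend you either adopt that argument or state and prove the no-atoms lemma correctly, making the assumption on $\mathrm{supp}\,\mu$ explicit.
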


\begin{proof}
Assertion 2) is clear and follows from the various equivariance properties and thus we concentrate on 1).

\medskip
For a.e. $x \in B$, the map $g \longmapsto \pi_\varphi(g) \big(\varphi(x)\big) = \varphi(gx)$ is measurable and hence the homomorphism $\pi_\varphi: G \r \H(S^1)$ is measurable. Since $G$ is locally compact second countable and since $\H(S^1)$ is second countable we deduce that $\pi_\varphi$ is continuous. Using Proposition \ref{prop4.1} we see that $\pi^*_\varphi(e^b_\IR) \not= 0$ and hence $\pi_\varphi$ is non-elementary. Assume that there is an exceptional minimal set $ K \underset{+}{\subset} S^1$. Let $J$ be a connected component if $S^1 \backslash K$; since ${\rm Ess\, Im} \;\varphi = S^1$, $\varphi^{-1}(J)$ is of positive measure but not of full measure in $B$. Since $\forall g \in B$ we have either $\pi_\varphi(g)\, J = J$ or $\pi_\varphi (g) \,J \cap J = \phi$, we conclude that either $g \varphi^{-1}(J) \equiv \varphi^{-1}(J)$ or $g \varphi^{-1}(J) \cap \varphi^{-1}(J) = \phi$. Since the $G$-action on $B$ is ergodic we have $\bigcup_{g \in G} \,g \varphi^{-1}(J) = \varphi^{-1}(S^1 \backslash K) \equiv B$ and hence there is $g_0 \in B$ with $g_0^{-1} \,\varphi^{-1}(J) \cap \varphi^{-1}(J) = \phi$.

\medskip
Thus $\bigcup_{h \in G} h\big(\varphi^{-1}(J) \times \varphi^{-1}(J)\big)$ is of positive measure in $B^2$, $G$-invariant and does not meet $g_0 \,\varphi^{-1}(J) \times \varphi^{-1}(J)$, itself also of positive measure; this contradicts the ergodicity of the $G$-action on $B \times B$. Thus $\pi_\varphi$ is minimal unbounded. Finally it follows from Corollary \ref{cor4.4} that $\pi_\varphi$ is strongly proximal.
\end{proof}

\section{Proofs of Theorem \ref{theo1.2}, \ref{theo1.3}, Remark \ref{rem1.4} (2), Corollary \ref{cor1.5} and \ref{cor1.6}}
\setcounter{equation}{0}

In order to apply the results obtained so far we recall
\begin{theorem}\label{theo5.1} {\rm (\cite{Ka03})} 

\medskip
Let $G$ be a locally compact second countable group and $\mu \in \cM^1(G)$ a symmetric spread-out probability measure on $G$. Then the $G$-action on the associated Poisson boundary $(B,\nu_B)$ is amenable, doubly ergodic; the same properties hold for the action of a lattice $\Gamma < G$.
\end{theorem}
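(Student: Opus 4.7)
The proof breaks into three parts. First, amenability of the $G$-action on $(B,\nu_B)$: I would realize it as a measurable $G$-equivariant quotient of the path space $(S,\mu_S)$, where $S = G^{\IN}$, $\mu_S = \mu^{\otimes\IN}$, and the quotient is by the tail equivalence relation of the random walk $x_n = g_1 \cdots g_n$. Amenability of the $G$-action on $(S,\mu_S)$ is obtained by Zimmer's construction: given any compact $G$-space $M$ with a suitably equivariant measurable field of probabilities, one produces a $G$-invariant selection by averaging along sample paths via martingale convergence (as used in \cite{M91} VI and in the proof of Theorem \ref{theo2.1} above). Amenability then descends to the quotient $(B,\nu_B)$.

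For double ergodicity the symmetry of $\mu$ is essential. I would consider the bilateral path space $\wt{S} = G^{\IZ}$, which carries both a forward tail $\sigma$-algebra yielding the boundary $B$ and a backward tail yielding the boundary $\check B$ associated to the reflected increment measure $\check\mu(A) := \mu(A^{-1})$. A Kolmogorov zero-one argument shows the two-sided tail is trivial, which translates into ergodicity of the diagonal $G$-action on $B \times \check B$. Under the symmetry assumption $\mu = \check\mu$, inversion $g \mapsto g^{-1}$ induces a $G$-equivariant measure-theoretic isomorphism $\check B \cong B$, and ergodicity of $G$ on $B \times B$ follows.

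To transfer to a lattice $\Gamma < G$, amenability passes automatically, since any amenable $G$-action restricts to an amenable action of any closed subgroup by Zimmer's general theory. For double ergodicity, I would fix a Borel fundamental domain $F \subset G$ for $\Gamma \backslash G$ and define the induced hitting measure $\mu_\Gamma \in \cM^1(\Gamma)$ by recording the $\Gamma$-coordinates of successive returns of the $\mu$-random walk to $\Gamma \cdot F$. Because $\mu$ is symmetric and $\Gamma$ has finite covolume, $\mu_\Gamma$ is again symmetric and spread out, and applying the first two steps to $(\Gamma,\mu_\Gamma)$ yields amenability and double ergodicity for the Poisson boundary of $(\Gamma,\mu_\Gamma)$; the conclusion for $B$ then follows once one identifies $(B,\nu_B)$ with this $\Gamma$-Poisson boundary.

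The main technical obstacle is exactly this last identification: one must exclude that $(B,\nu_B)$ is a proper $\Gamma$-equivariant quotient of the $\Gamma$-Poisson boundary of $\mu_\Gamma$, since double ergodicity of a $\Gamma$-action on a larger space need not descend to a quotient. This is the heart of Kaimanovich's contribution and is handled via his entropy comparison and strip criteria for maximality of a $\mu_\Gamma$-boundary, which ensure that no randomness is lost when passing from the $\mu$-walk on $G$ to the induced $\mu_\Gamma$-walk on $\Gamma$.
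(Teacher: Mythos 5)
The paper does not prove this statement at all: Theorem \ref{theo5.1} is quoted verbatim from Kaimanovich \cite{Ka03}, so there is no internal proof to compare yours against, and your sketch has to be judged against what \cite{Ka03} actually establishes and against what the paper needs from it (the isomorphism (\ref{4.1}) via \cite{Mo01} Thm.~7.5.3, and the restriction to $\Gamma$ used in Section 5). Measured that way, your outline of the $G$-part captures the right circle of ideas for \emph{scalar} double ergodicity -- bilateral path space, forward and backward tails giving the boundaries of $\mu$ and $\check\mu$, ergodicity of the Bernoulli shift on increments, and symmetry identifying the two boundaries -- but it misses that ``doubly ergodic'' here must mean ergodicity of $G$ on $B\times B$ \emph{with coefficients} in separable dual Banach $G$-modules. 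That stronger statement is the actual content of \cite{Ka03}; it does not follow from a zero--one law and requires Kaimanovich's martingale argument for operator-valued functions. One smaller soft spot: ``amenability descends to the quotient'' is false for general $G$-factors (the one-point factor of the amenable action of $G$ on itself would make every group amenable); Zimmer's argument has to be run directly on the boundary via the martingale selection you allude to.

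The coefficient issue is not pedantry, because it is exactly what makes your lattice step unnecessary and what makes the route you propose for it unworkable. Your plan -- induce a first-return measure $\mu_\Gamma$, prove it is symmetric, and identify $(B,\nu_B)$ with the Poisson boundary of $(\Gamma,\mu_\Gamma)$ via entropy or strip criteria -- founders on the last identification: the entropy criterion needs finite entropy and the strip criterion needs geometric input, neither of which is available for an arbitrary lattice in an arbitrary locally compact second countable group, and the symmetry of the induced hitting measure is itself not automatic. The argument that actually works (and is how \cite{Ka03} and the Burger--Monod circle of ideas handle restriction to lattices) is induction of coefficients: a $\Gamma$-invariant element of $L^\infty(B\times B,E)$ gives a $G$-invariant element of $L^\infty\big(B\times B, L^2(G/\Gamma,E)\big)$, and double ergodicity of the $G$-action \emph{with coefficients} in this induced module forces it to be constant, hence the original function to be constant. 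So even to obtain the scalar statement for $\Gamma$ you need the coefficient version for $G$; a proof that only establishes scalar ergodicity for $G$, as yours does, cannot reach the lattice half of the theorem.
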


Using Theorem \ref{theo5.1} for $\Gamma = G$ and applying Corollary \ref{cor4.4}, Theorem \ref{theo4.5} and Proposition \ref{prop4.1} implies readily Theorem \ref{theo1.3}. Concerning Remark \ref{rem1.4} (2), if $\rho_1,\dots ,\rho_n$ are minimal strongly proximal actions and $\o_i : B^3 \r \{-1,1\}$ are the associated cocycle, then for any $n_i \in \IZ$, $\sum^n_{i=1} \, n_i \,\o_i$ takes integral values and hence by the isomorphism in (\ref{4.1}) and Proposition \ref{prop4.1} the norm of $\sum^n_{i=1} \,n_i \,\rho_i^*(e^b_\IR)$ is half-integral.

\medskip
Corollary \ref{cor1.6} (1) follows from Proposition \ref{prop3.2}, Proposition \ref{prop4.1} and Lemma \ref{lem4.3}.

\medskip
Corollary \ref{cor1.6} (2) follows from Theorem \ref{theo1.3} (2) which implies that $\rho_1,\rho_2$ have modulo conjugation the same strongly proximal quotient.

\medskip
We turn now to the proof of Theorem \ref{theo1.2}. First assume that for the given minimal unbounded action $\rho$, $\rho_{sp}$ extends continuously. Let $k = |\cZ_{\cH^+} (\rho(\Gamma))|$. Then we have that $(\rho_{sp})^* (e^b_\IR) = k \cdot \rho^*(e^b_\IR)$ (Lemma \ref{lem4.3}) and since $(\rho_{sp})^* \,(e^b_{\IR})$ is in the image of the restriction map, so is $\rho^*(e^b_\IR)$.

\medskip
Conversely, apply Theorem \ref{theo5.1} and let $\mu_\Gamma \in \cM^1 (\Gamma)$ be a probability measure of full support such that $\nu_B$ is $\mu_\Gamma$-stationary (see \cite{M91}). On the space $(B,\nu_B)$ the restriction map
\begin{equation*}
H^2_{b} (G, \IR) \r H^2_b(\Gamma, \IR),
\end{equation*}
is realized by the inclusion
\begin{equation}\label{5.1}
\cZ \,L_{\rm alt}^\infty(B^3,\IR)^G \hookrightarrow \cZ \,L^\infty_{\rm alt}(B^3,\IR)^\Gamma\,.
\end{equation}

\medskip\n
Now if $\rho^*(e^b_\IR)$ is in the image of the restriction map then so is $(\rho_{sp})^*(e^b_\IR)$.  Let $\o_{\rho_{sp}}: B^3 \r \IR$ be the cocycle associated to $\rho_{sp}$; using (\ref{5.1}) we get that $\o_{\rho_{sp}}$ is $G$-invariant, satisfies all hypothesis of Theorem \ref{theo4.5} and hence corresponds to a continuous minimal strongly proximal homomorphism $\pi: G \r \H(S^1)$. Since $\o_{\pi |_{\Gamma}}$ clearly coincides with $\o_{\rho_{sp}}$ we conclude from Theorem \ref{theo4.5} that $\pi |_{\Gamma}$ and $\rho_{sp}$ are conjugate.

\section{Locally compact groups acting on $\pmb{S^1}$}
\setcounter{equation}{0}

In this section we prove Theorem \ref{theo1.8} using some results from \cite{Gh01}, \cite{F01}. In the sequel, Rot denotes as usual the group of rotations of $S^1$ and, for every $k \ge 1$, PSL$(2,\IR)_k \subset \H(S^1)$ is the $k$-fold cyclic covering group of PSL$(2,\IR)$. We record the following

\begin{lemma}\label{lem6.1}
Let $H$ be connected, semi-simple, with finite center and no compact factors, and
\begin{equation}
\pi : H \r \H(S^1)
\end{equation}

\n
a continuous non-trivial homomorphism. Then there exists $k \in \IN$ such that up to conjugation, $\pi(H) = {\rm PSL}(2,\IR)_k$.
\end{lemma}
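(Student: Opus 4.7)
My plan is to apply the trichotomy (\ref{3.6}) to the $\pi(H)$-action on $S^1$, rule out both the finite-orbit case and the exceptional Cantor case using connectedness of $H$ together with structural properties of semi-simple Lie groups, and then identify the image in the remaining minimal case via the classification of connected minimal subgroups of $\H(S^1)$ cited from \cite{F01} and \cite{Gh01}.

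For the Cantor case, let $C \subset S^1$ be the $\pi(H)$-invariant Cantor minimal set and let $E \subset C$ be the countable set of endpoints of components of $S^1 \setminus C$. Since each $\pi(h)$ is a homeomorphism preserving $C$, it permutes the complementary components and hence their endpoints, so $E$ is $\pi(H)$-invariant. For $a \in E$ and $X$ in the Lie algebra $\mathfrak{h}$ of $H$, the trajectory $t \mapsto \pi(\exp tX)(a)$ is a continuous map from $\IR$ into the countable set $E \subset S^1$; a connected subset of a countable subset of $S^1$ must be a singleton, so the trajectory is constant equal to $a$. Thus $a$ is fixed by every one-parameter subgroup of $H$, and by connectedness of $H$ it is fixed by all of $\pi(H)$, producing a finite orbit and contradicting the hypothesis of this case.

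For the finite-orbit case, connectedness of $H$ forces $H$ to fix the orbit $F$ pointwise and to preserve each component of $S^1 \setminus F$, reducing the problem to showing that $H$ admits no nontrivial continuous orientation-preserving action on $\IR$. I would prove this factor by factor: for each simple factor, a maximal compact subgroup $K$ must fix some point $p \in \IR$ (for example $p = \sup(K \cdot x)$ for any $x$, fixed by compactness and order preservation), and the $H$-stabilizer of $p$ is a closed subgroup containing $K$. The irreducibility of $\mathrm{Ad}(K)$ on the Cartan complement $\mathfrak{p}$ then leaves only two possibilities for its Lie algebra, $\mathfrak{k}$ and $\mathfrak{h}$; the first is excluded because the symmetric space $H/K$, of dimension $\geq 2$, cannot embed topologically into $\IR$ by invariance of domain. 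Hence $p$ is $H$-fixed, and iterating on components of $\IR \setminus \mathrm{Fix}(H)$ yields global triviality of the action. This contradicts the nontriviality of $\pi$.

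In the remaining minimal case, the cited classification shows that, up to conjugation, $\overline{\pi(H)}$ is either the rotation subgroup Rot or ${\rm PSL}(2,\IR)_k$ for some $k$. The Rot possibility is excluded since a connected semi-simple Lie group with finite center and no compact factors admits no nontrivial continuous homomorphism to a compact group. Hence $\pi(H) \subseteq {\rm PSL}(2,\IR)_k$ after conjugation, and I view $\pi$ as a continuous, hence smooth by automatic smoothness, homomorphism into ${\rm PSL}(2,\IR)_k$. Its differential $d\pi_e : \mathfrak{h} \to \mathfrak{sl}_2(\IR)$ is nonzero, else $\pi$ would be trivial by connectedness, and is therefore surjective by simplicity of $\mathfrak{sl}_2(\IR)$; so $\pi$ is a submersion, its image is open, and being a subgroup of the connected group ${\rm PSL}(2,\IR)_k$, it is the whole group. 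The main obstacle is the finite-orbit reduction, where the finite-center and no-compact-factor hypotheses enter essentially; indeed, they fail for the universal cover of ${\rm PSL}(2,\IR)$, which does act nontrivially on $\IR$.
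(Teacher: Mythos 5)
The paper does not actually prove Lemma \ref{lem6.1}: it ``records'' it and refers to \cite{Gh01} and \cite{F01}, so your argument is necessarily a from-scratch substitute rather than a variant of an in-paper proof. Your treatment of two of the three branches of the trichotomy (\ref{3.6}) is correct and nicely self-contained: in the exceptional case the endpoints of the complementary intervals form a countable invariant set, and connectedness of $H$ forces each orbit in it to be a point, contradicting minimality of the Cantor set; in the finite-orbit case the fixed-point argument on each complementary arc, using compactness of $K$, irreducibility of the isotropy representation of a simple factor, and invariance of domain, does show that $H$ acts trivially on $\IR$, and you correctly identify that finite center and absence of compact factors are exactly what make this work (it fails for $\widetilde{{\rm PSL}(2,\IR)}$).

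The genuine gap is in the minimal case. The classification you invoke is, as the paper states it, a classification of \emph{locally compact} connected minimal subgroups of $\H(S^1)$, and you apply it to $\overline{\pi(H)}$. But a closed subgroup of $\H(S^1)$ need not be locally compact, and the classification of \emph{closed} connected minimal subgroups contains, besides ${\rm Rot}$ and ${\rm PSL}(2,\IR)_k$, the cyclic covers $\H(S^1)_k$ of the full homeomorphism group, which are not locally compact and not on your list; so the step ``$\overline{\pi(H)}$ is ${\rm Rot}$ or ${\rm PSL}(2,\IR)_k$, hence $\pi(H)\subset {\rm PSL}(2,\IR)_k$'' is unjustified. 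This is precisely the difficulty the paper flags when it says that ``the image is not necessarily closed.'' A way to close the gap along your lines: a maximal compact $K<H$ is conjugate into ${\rm Rot}$; if $\pi(K)$ were trivial then $\ker\pi$ would contain the normal closure of $K$, which is all of $H$ since every noncompact simple factor has nontrivial maximal compact subalgebra; hence $\pi(K)$ is a nontrivial compact connected subgroup of ${\rm Rot}$, i.e.\ all of ${\rm Rot}$, so $H$ acts \emph{transitively} on $S^1$. Then $S^1\simeq H/H_x$ with $H_x$ a closed codimension-one subgroup, and the classification of codimension-one subalgebras of a semisimple Lie algebra identifies the action with that of ${\rm PSL}(2,\IR)_k$ via an $\mathfrak{sl}_2(\IR)$ factor. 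Finally, a small slip in your last step: the image of $d\pi_e$ is a subalgebra, not an ideal, of $\mathfrak{sl}_2(\IR)$, so simplicity of $\mathfrak{sl}_2(\IR)$ alone does not give surjectivity; you should instead note that a nonzero quotient of the semisimple $\mathfrak{h}$ is semisimple and $\mathfrak{sl}_2(\IR)$ has no proper nonzero semisimple subalgebras.
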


For the following lemma we recall that given a subgroup $L < \H(S^1)$, $\cN_{\cH^+}(L)$ and $\cZ_{\cH^+}(L)$ denote respectively its normalizer and centralizer in $\H(S^1)$.

\begin{lemma}\label{lem6.2} ~

\begin{itemize}
\item[{\rm 1)}] $\cN_{\cH^+} ({\rm Rot}) = \cZ_{\cH^+} ({\rm Rot}) = {\rm Rot}$.
\item[{\rm 2)}] $\cN_{\cH^+}({\rm PSL}(2,\IR)_k) = {\rm PSL}(2, \IR)_k$.
\item[{\rm 3)}] $\cZ_{\cH^+} ({\rm PSL}(2, \IR)_k) = \cZ ({\rm PSL}(2,\IR)_k)$, which is cyclic of order $k$.
\end{itemize}
\end{lemma}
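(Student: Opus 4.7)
I will prove the three parts in order, as 2) and 3) both rest on 1). The common strategy is to reduce everything to the rigidity of ${\rm Rot}$ and to the triviality of the centralizer of ${\rm PSL}(2,\IR)$ in $\H(S^1)$. For 1), the inclusions ${\rm Rot} \subset \cZ_{\cH^+}({\rm Rot}) \subset \cN_{\cH^+}({\rm Rot})$ are obvious. To identify the centralizer with ${\rm Rot}$, I would lift $h$ commuting with all rotations to $\overline{h}: \IR \r \IR$ and observe that for each $\alpha$ the continuous $\IZ$-valued function $x \longmapsto \overline{h}(x+\alpha) - \overline{h}(x) - \alpha$ is constant; viewing this constant as a continuous $\IZ$-valued function of $\alpha$ vanishing at $\alpha=0$, it vanishes identically, forcing $\overline{h}(x) = x + \overline{h}(0)$ and hence $h \in {\rm Rot}$. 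For the normalizer, conjugation by $h$ induces a continuous orientation-preserving automorphism of ${\rm Rot} \cong S^1$; the only continuous automorphisms of the circle group are $\pm{\rm id}$, and $-{\rm id}$ evaluated at $0$ in $h \circ r_\alpha = r_{-\alpha} \circ h$ yields $h(\alpha) = h(0) - \alpha$, which is orientation-reversing and hence excluded. Thus $h$ centralizes ${\rm Rot}$.

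For 2), let $h \in \cN_{\cH^+}({\rm PSL}(2,\IR)_k)$. Conjugation by $h$ gives a continuous automorphism of ${\rm PSL}(2,\IR)_k$, which sends the maximal compact subgroup $K$, coinciding with ${\rm Rot}$ under the standard embedding ${\rm PSL}(2,\IR)_k \subset \H(S^1)$, to another maximal compact subgroup. By the classical conjugacy of maximal compacts in the connected Lie group ${\rm PSL}(2,\IR)_k$, there is $g \in {\rm PSL}(2,\IR)_k$ with $gh K (gh)^{-1} = K$, so $gh \in \cN_{\cH^+}(K) = {\rm Rot}$ by 1). Since ${\rm Rot} = K \subset {\rm PSL}(2,\IR)_k$, it follows that $h = g^{-1}(gh) \in {\rm PSL}(2,\IR)_k$.

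For 3), the inclusion $\cZ({\rm PSL}(2,\IR)_k) \subset \cZ_{\cH^+}({\rm PSL}(2,\IR)_k)$ is clear, and the center is cyclic of order $k$ since $\pi_1({\rm PSL}(2,\IR)) = \IZ$; under the standard embedding it acts on $S^1$ as the subgroup of rotations of order $k$. Conversely, if $h$ centralizes ${\rm PSL}(2,\IR)_k$, then $h$ centralizes $K = {\rm Rot}$ and hence lies in ${\rm Rot}$ by 1). Passing to the quotient $S^1 \r S^1/\cZ({\rm PSL}(2,\IR)_k) \cong S^1$, which intertwines the ${\rm PSL}(2,\IR)_k$-action with the standard action of the quotient group ${\rm PSL}(2,\IR)$, the rotation $h$ descends to $\overline{h} \in \H(S^1)$ commuting with ${\rm PSL}(2,\IR)$. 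Since distinct points of $\partial\mathbb{H}^2$ have distinct minimal parabolic stabilizers, any element of $\H(S^1)$ commuting with ${\rm PSL}(2,\IR)$ must preserve every stabilizer and hence fix every point; so $\overline{h} = {\rm id}$, which means $h$ preserves each $\cZ({\rm PSL}(2,\IR)_k)$-orbit setwise and, by continuity of $h$, belongs to $\cZ({\rm PSL}(2,\IR)_k)$.

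The most delicate point I anticipate is the bookkeeping behind the identification $K = {\rm Rot}$ under the standard embedding ${\rm PSL}(2,\IR)_k \subset \H(S^1)$, together with the normalization making $\cZ({\rm PSL}(2,\IR)_k)$ act as the subgroup of $k$-th order rotations; once these identifications are fixed the three arguments above are largely formal consequences of part 1) and of the classical facts about ${\rm PSL}(2,\IR)$.
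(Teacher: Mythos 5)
Your arguments for 1) and 2) are correct and essentially identical to the paper's: for 1) the paper likewise notes that the induced automorphism of ${\rm Rot}$ must be the identity (inversion being excluded by orientation) and then pins down a centralizing $g$ by normalizing $g(0)=0$ and evaluating on the orbit of $0$; for 2) it runs exactly your maximal-compact conjugation argument, reducing to $\cN_{\cH^+}(K_k)=K_k$ via part 1). The only real divergence is in 3): the paper deduces it in one line from 2), since $\cZ_{\cH^+}({\rm PSL}(2,\IR)_k)\subset\cN_{\cH^+}({\rm PSL}(2,\IR)_k)={\rm PSL}(2,\IR)_k$ forces the centralizer to equal the center, which is cyclic of order $k$ as the deck group of the $k$-fold cover. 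Your alternative for 3) --- placing $h$ in ${\rm Rot}$ via part 1), descending to the strongly proximal quotient, and using that an element of $\H(S^1)$ commuting with ${\rm PSL}(2,\IR)$ must preserve each point stabilizer and hence fix every point --- is also correct, just longer; the one-line deduction from 2) is the cheaper route once 2) is in hand. Your closing caveat about normalizing $K={\rm Rot}$ and the center acting by rotations of order $k$ is the same implicit conjugation the paper performs ("since $K_k$ is conjugate to ${\rm Rot}$"), so nothing is missing.
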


\begin{proof}
1) Let $g \in \cN_{\cH^+}({\rm Rot})$; if $g \,r \,g^{-1} = r^{-1} \;\;\forall r \in {\rm Rot}$ then $g$ cannot preserve orientation; hence $g \in \cZ_{\cH^+} ({\rm Rot})$. Composing $g$ with a rotation we may assume $g(0) = 0$ which on applying $r \in {\rm Rot}$ yields $g\big(r(0)\big) = r(0)$ and thus $g = e$.

\bigskip\n
2) For $g \in \cN_{\cH^+}({\rm PSL}(2, \IR)_k)$ and $K_k < {\rm PSL}(2,\IR)_k$ maximal compact subgroup, $g^{-1} \,K_k \,g$ is maximal compact as well and hence there is $h \in {\rm PSL}(2,\IR)_k$ with $g\,K_k \,g^{-1} = h\,K_k \,h^{-1}$ that is $h^{-1} \,g \in \cN_{\cH^+}(K_k)$; since $K_k$ is conjugate to ${\rm Rot}$ we deduce from 1) that $h^{-1} g \in K_k$. 

\bigskip\n
3) follows from 2).
\end{proof}

Finally we record the following consequence of Hilbert's fith problem:

\begin{theorem}\label{theo6.3} {\rm (\cite{B-M02} Thm.~3.3.3)}

\medskip
Let $G$ be a locally compact group and $A(G) \vartriangleleft G$ its amenable radical. Let $G_a : = G/A(G)$. Then 

\medskip\n
{\rm 1)} $(G_a)^0$ is connected, semi-simple, with trivial center and no compact factors. 

\bigskip\n
{\rm 2)} The centralizer $L$ of $(G_a)^0$ in $G_a$ is totally disconnected; we have $L \cap (G_a)^0 = (e)$ and the product $L \cdot (G_a)^0$ is open of finite index in $G_a$.
\end{theorem}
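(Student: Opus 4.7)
My plan is to deduce the two assertions of Theorem~\ref{theo6.3} from three ingredients: (i) the triviality of the amenable radical of $G_a$; (ii) the Gleason-Montgomery-Zippin solution of Hilbert's fifth problem for connected locally compact groups; and (iii) the finiteness of the outer automorphism group of a centreless connected semi-simple Lie group with no compact factors and finitely many simple factors.

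The first observation I would record is that $G_a = G/A(G)$ has trivial amenable radical. Indeed if $N$ is a closed normal amenable subgroup of $G_a$, its preimage in $G$ is a closed normal extension of $A(G)$ by $N$, hence amenable, and maximality of $A(G)$ forces $N = (e)$. As a consequence, since $(G_a)^0$ is characteristic in $G_a$, every characteristic subgroup of $(G_a)^0$ is normal in $G_a$, and any such subgroup which is amenable must be trivial.

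To prove~1) I would then invoke the consequence of Hilbert's fifth problem that every connected locally compact group admits a unique maximal compact normal subgroup with Lie quotient. Applied to $(G_a)^0$, this subgroup is characteristic in $(G_a)^0$ and is compact (hence amenable), so by the previous paragraph it is trivial, and $(G_a)^0$ is a connected Lie group. Its solvable radical, its maximal compact semi-simple factor, and its centre are each characteristic in $(G_a)^0$ and each amenable (respectively solvable, compact, and abelian); hence all three vanish, giving~1).

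For~2) I would set $L := \cZ_{G_a}\bigl((G_a)^0\bigr)$, a closed subgroup. Any element of $L \cap (G_a)^0$ centralises $(G_a)^0$ and therefore lies in its (trivial) centre, so $L \cap (G_a)^0 = (e)$. The identity component $L^0$ is a connected set through the identity of $G_a$, hence $L^0 \subset (G_a)^0$, so $L^0 \subset L\cap (G_a)^0 = (e)$ and $L$ is totally disconnected. Openness of $L \cdot (G_a)^0$ is automatic from openness of $(G_a)^0$. Finally, conjugation yields a continuous homomorphism $G_a \to \mathrm{Aut}\bigl((G_a)^0\bigr)$ with kernel exactly $L$, sending $(G_a)^0$ onto the inner automorphism group. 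Since $(G_a)^0$ is a connected semi-simple Lie group with trivial centre, no compact factors and finitely many simple factors, its outer automorphism group is finite, and hence $G_a/\bigl(L \cdot (G_a)^0\bigr)$ embeds into a finite group, establishing finite index.

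The main obstacle, or rather the step whose difficulty is entirely imported from external machinery, is the structural input from Hilbert's fifth problem, and in particular the existence of the maximal compact normal subgroup of a connected locally compact group with Lie quotient. Once that is available, the remainder is routine bookkeeping with Levi decompositions, characteristic subgroups, and the finiteness of outer automorphism groups of semi-simple Lie groups.
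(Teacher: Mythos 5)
The paper offers no proof of this statement at all---it is quoted verbatim from Burger--Monod (\cite{B-M02}, Thm.~3.3.3)---so your proposal can only be judged on its own terms. Most of it is sound and is essentially the standard argument: the reduction to the triviality of the amenable radical of $G_a$, the use of the maximal compact normal subgroup of a connected locally compact group to see that $(G_a)^0$ is a Lie group, the successive elimination of the radical, the compact factors and the centre, the total disconnectedness of $L$ via $L^0\subset L\cap (G_a)^0=(e)$, and the finite-index claim via the embedding of $G_a/\bigl(L\cdot (G_a)^0\bigr)$ into the finite group ${\rm Out}\bigl((G_a)^0\bigr)$ are all correct.

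There is, however, one genuine gap: the sentence ``Openness of $L\cdot (G_a)^0$ is automatic from openness of $(G_a)^0$'' rests on a false premise. The identity component of a locally compact group is closed but in general \emph{not} open, and it fails to be open precisely in the situations this theorem is designed for: take $G=G_a=\mathrm{PSL}(2,\mathbb{R})\times \mathrm{PSL}(2,\mathbb{Q}_p)$, which has trivial amenable radical, yet $(G_a)^0=\mathrm{PSL}(2,\mathbb{R})\times\{e\}$ is not open (here the theorem holds only because $L=\{e\}\times\mathrm{PSL}(2,\mathbb{Q}_p)$ is large). The same phenomenon occurs for the groups $\prod_\alpha \mathbb{G}_\alpha(k_\alpha)$ with non-archimedean factors to which the paper applies the result. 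The correct route is to get openness \emph{from} finite index rather than the other way around: $L\cdot (G_a)^0$ is a normal subgroup of finite index which is $\sigma$-compact (continuous image of $L\times (G_a)^0$, with $(G_a)^0$ a connected Lie group and $L$ closed in a second countable group), hence Haar measurable of positive measure, hence open by the Steinhaus--Weil theorem; equivalently, a Borel subgroup of finite index in a locally compact group is non-meagre and therefore open by Pettis' lemma. With that substitution your argument is complete.
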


\bigskip\n
{\it Proof of Theorem \ref{theo1.8}.} Let $\pi : G \r \H(S^1)$ be a continuous minimal action. We consider $\pi|_{A(G)}$, and apply our trichotomy:

\bigskip\n
{\it Case 1):} $\pi \big(A(G)\big)$ has a finite orbit; then the set of finite $\pi\big(A(G)\big)$- orbits with fixed cardinality is non-empty closed and $\pi(G)$-invariant, hence coincides with $S^1$. As a result, $\pi(A(G))$ is finite cyclic. 

\bigskip\n
{\it Case 2):} An exceptional minimal set for $\pi\big(A(G)\big)$ being unique would also be $\pi(G)$-invariant contradicting minimality of $\pi(G)$. 

\bigskip\n
{\it Case 3):} $\pi\big(A(G)\big)$ is minimal; since $A(G)$ is amenable $(\pi|_{A(G)})^* (e^b_\IR) = 0$ and hence, modulo conjugating $\pi$, we may assume that $\pi(A(G))$ is a dense subgroup of Rot. In particular, $\pi(G) \subset \cN_{\cH^+}({\rm Rot}) = {\rm Rot}$ by Lemma \ref{lem6.2}. This gives the first alternative in Theorem \ref{theo1.8}.

\medskip
Now we return to Case 1 and consider $G_\pi := G/(A(G)\cap {\rm Ker}\, \pi)$ which is a finite extension $q: G_\pi \r G_a$ of $G_a$. Observe that $\pi$ factors through $G_a$. Let $L_\pi : = q^{-1}(L)$; then $(G_\pi)^0$ is connected, semi-simple with finite center and no compact factors, $L_\pi$ is totally disconnected and $L_\pi \cdot (G_\pi)^0$ is open of finite index in $G_\pi$. We have then two cases:

\medskip\n
1) $\pi|_{(G_\pi)^0}$ is trivial. Since $\H(S^1)$ does not contain small subgroups, $\pi$ sends some compact open subgroup of $L_\pi$ to the identity which implies that ${\rm Ker}\, \pi$ is open.

\bigskip\n
2) $\pi |_{(G_\pi)^0}$ is non-trivial and hence by Lemma \ref{lem6.1}, $\pi\big((G_\pi)^0\big) = {\rm PSL}(2,\IR)_k$; since $\pi(G)$ normalizes $\pi\big((G_\pi)^0\big)$ we get $\pi(G) = {\rm PSL}(2,\IR)_k$ by Lemma \ref{lem6.2} (2). \hfill $\square$

\vfill
\noindent
Marc Burger\\
Forschungsinstitut f\"ur Mathematik\\
HG G 45.2\\
R\"amistrasse 101\\
CH-8092 Z\"urich
\\[1ex]
email: marc.burger@fim.math.ethz.ch
\end{document}